\newcommand{\less}{\lesssim}
\newcommand{\beal}{\begin{align}}
\newcommand{\enal}{\end{align}}
\newcommand{\bealn}{\begin{align*}}
\newcommand{\enaln}{\end{align*}}
\newcommand{\bear}{\begin{eqnarray}}
\newcommand{\eear}{\end{eqnarray}}
\newcommand{\beeq}{\begin{equation}}
\newcommand{\eneq}{\end{equation}}
\newcommand{\spec}{{\rm spec}}
\newcommand{\eps}{{\varepsilon}}
\newcommand{\R}{{\mathbb R}}
\newcommand{\Compl}{{\mathbb C}}
\newcommand{\Z}{{\mathbb Z}}
\newcommand{\Nat}{{\mathbb N}}
\newcommand{\calC}{{\mathcal C}}
\newcommand{\calB}{{\mathcal B}}
\newcommand{\calF}{{\mathcal F}}
\newcommand{\calJ}{{\mathcal J}}
\newcommand{\calL}{{\mathcal L}}
\newcommand{\calQ}{{\mathcal Q}}
\newcommand{\calS}{{\mathcal S}}
\newcommand{\calD}{{\mathcal D}}
\newcommand{\calK}{{\mathcal K}}
\newcommand{\calN}{{\mathcal N}}
\newcommand{\tileps}{{\tilde{\eps}}}
\newcommand{\tilf}{{\tilde{f}}}
\newcommand{\la}{\langle}
\newcommand{\ra}{\rangle}
\renewcommand{\ln}{\log}
\newcommand{\IS}{IS}
\def\pr{\partial}
\def\nn{\nonumber}
\def\bm{\left[ \begin{array}{cc}}
\def\endm{\end{array}\right]}
\def\calR{{\mathcal R}}
\def\calC{{\mathcal C}}
\def\calB{{\mathcal B}}
\def\eps{\varepsilon}
\def\pr{\partial}
\def\calL{{\mathcal L}}
\def\calD{{\mathcal B}}
\def\calF{{\mathcal F}}
\def\calK{{\mathcal K}}
\def\calS{{\mathcal S}}
\def\Id{{\rm Id}}
\def\bm{\left[\begin{matrix} }
\def\endm{\end{matrix}\right]}
\def\la{\langle}
\def\ra{\rangle}
\def\nn{\nonumber}
\def\les{\lesssim}
\def\R{{\mathbb R}}
\newtheorem{theorem}{Theorem}
\newtheorem{lemma}[theorem]{Lemma}
\newtheorem{defi}[theorem]{Definition}
\newtheorem{proposition}[theorem]{Proposition}
\theoremstyle{remark}
\newtheorem{remark}[theorem]{Remark}
\def\calR{{\mathcal R}}
\def\half{\frac12}
\def\cQ{{\mathcal Q}}
\def\dom{{\rm Dom}}
\def\cL{\mathcal L}
\def\tilphi{\phi}
\def\tilm{m}
\def\tiltheta{\theta}
\renewcommand{\Im}{\,{\rm Im}\,}
\renewcommand{\Re}{\,{\rm Re}\,}
\def\Bla{\Big\langle}
\def\Bra{\Big\rangle}
\newcommand{\go}{\phi_0}
\def\cN{{\mathcal N}}
\renewcommand{\hat}{\widehat}
\renewcommand{\epsilon}{\eps}
\renewcommand{\tilde}{\widetilde}
\numberwithin{equation}{section}
\numberwithin{theorem}{section}
\begin{document}

\title{Renormalization and blow up for the critical Yang-Mills problem.}

\author{J.\ Krieger}
\address{The University of Pennsylvania, Department of Mathematics,
David Rittenhouse Lab, 209 South 33rd Street, Philadelphia, PA 19118, U.S.A.}
\email{kriegerj@math.upenn.edu}

\author{W.\ Schlag}
\address{Department of Mathematics, The University of Chicago, 5734
South University Avenue, Chicago, IL 60637, U.S.A.}
  \email{schlag@math.uchicago.edu}

\author{D.\ Tataru}
\address{Department of Mathematics, The University of California at Berkeley, Evans Hall, Berkeley, CA 94720, U.S.A.}
\email{tataru@math.berkeley.edu}

\thanks{The authors were partially supported by the National Science Foundation, J.\ K.\ by DMS-757278 and a Sloan Fellowship, W.\ S.\ by
DMS-0617854, D.~T.\ by DMS-0354539, and DMS-0301122. J.\ K.  gratefully acknowledges the hospitality of the University of California, Berkeley, as well as the University of Chicago.}



\maketitle

\section{Introduction}
\label{sec:intro}

We describe singularity formation for the semi-linear wave equation
\begin{equation}\label{eq:wave}
\Box u -\frac{2}{r^2}u(1-u^2) =0,\qquad \Box=\partial_{tt} -\Delta
\end{equation}
in $\R^{2+1}$. This equation arises as follows: consider Yang-Mills
fields in $(d+1)$-dimensional Minkowski spacetime. The gauge
potential $A_\alpha$ is a one-form with values in the Lie algebra
$\frak g$ of a compact Lie group~$G$. In terms of the curvature
$F_{\alpha\beta}=\partial_\alpha A_\beta-\partial_\beta A_\alpha +
[A_\alpha, A_\beta]$ the Yang Mills equations take the form
\[
\partial_\alpha F^{\alpha\beta}  + [A_\alpha, F^{\alpha\beta}] =0,
\]
where $[\cdot,\cdot]$ is the Lie bracket on $G$. We take $G=SO(d)$
with $\frak g$ being the skew-symmetric $d\times d$ matrices. In
particular $A_\alpha = \{A^{ij}_{\alpha}\}_{i,j=1}^d$. Assuming the
spherically symmetric ansatz (see~\cite{SST} and~\cite{GS} for
analogous considerations in the context of the Yang-Mills heat flow)
\[
A^{ij}_\mu (x) = (\delta^i_\mu x^j - \delta^j_\mu x^i)
\frac{1-u(t,r)}{r^2}
\]
the Yang Mills  equations reduce to the semi-linear wave equation
\[
\Box_{d-2} u = \partial_{tt} u - \Delta_{d-2} u = \frac{d-2}{r^2} u
(1-u^2)
\]
This equation is invariant under the scaling $u(r,t)\mapsto
u(r/\lambda,t/\lambda)$. With respect to this scaling the energy
\[
E = \int_0^\infty \Big[ u_t^2 + u_r^2 + \frac{d-2}{2r^2}
(1-u^2)^2\Big]r^{d-3}\, dr
\]
is invariant iff $d=4$ which is the case we consider in this paper.
Equation~\eqref{eq:wave} admits the stationary solution
\[
Q(r)=\frac{1-r^2}{1+r^2},
\]
called {\em instanton}. In $3+1$ dimensions, the Yang-Mills
equations are subcritical relative to the energy. Eardley and
Moncrief~\cite{EM1}, \cite{EM2} showed that in that case there are
global smooth solutions. See also Klainerman, Machedon~\cite{KM} who
lowered the regularity assumptions on the data. In the energy
critical case of $4+1$ dimensions, local well-posedness in $H^s$
with $s>1$ was shown by  Klainerman, Tataru~\cite{KT}. However, it
was conjectured that global wellposedness fails and that
singularities should form, see Bizon, Tabor~\cite{BT} and Bizon,
Ovchinnikov, Sigal~\cite{BOS} for numerical and heuristic arguments
to that effect. However, such a phenomenon had not been observed
rigorously. In this paper we show how to construct a
 solution to the wave equation~\eqref{eq:wave} as a perturbation of a
 time-dependent profile
\begin{equation}\nn
u_0 = Q(R), \qquad R =r
\lambda(t),\quad \Phi(R)=\frac{R^2}{(1+R^2)^2}
\end{equation}
with $\lambda(t)$ a logarithmic correction to the self-similar ansatz
\[
\lambda(t)=t^{-1}(-\log t)^{\beta},\,\beta>0
\]
In other words, we prove that in general the energy critical
Yang-Mills equations develop singularities in finite time.

 As in our earlier work \cite{KST1} for energy
critical wave maps, and~\cite{KST2} for the energy critical
semi-linear wave equation in ~$\R^3$, the blow up rate is
prescribed. Since a continuum of rates is admissible, the blow up
solutions which we construct can of course not be stable. In
contrast to the rates $\lambda(t)=t^{-1-\nu}$ which appeared in
\cite{KST1} and~\cite{KST2}, in the case of Yang-Mills we only make
logarithmic corrections to the self-similar rate. This has to do
with the fact that the linearized Yang Mills operator  has a zero
energy eigenvalue in $4+1$ dimensions, whereas for wavemaps as well
as the three-dimensional semi-linear focusing wave equation, it
exhibits  a zero energy resonance. This difference is very important
as in the case of an eigenvalue an orthogonality condition appears
which is not present in the zero energy resonant case. It is this
fact which required major changes to our scheme, especially to the
``renormalization part'' in which we construct approximate
solutions. In addition, in contrast to our earlier work on wave
maps~\cite{KST1}, the approximate solutions here are much rougher,
and indeed asymptotically only lie in $H^{1}$, the threshold for
local well-posedness of the critical Yang-Mills equation. The reason
for this is the much more singular nature of the ODE's arising in
the renormalization step, due to the different blow up rate.

\begin{theorem}  Let
\[
\lambda(t):=t^{-1}(-\log t)^{\beta}
\]
For each $\beta>\frac{3}{2}$ there exists a spherically symmetric
solution $u$ to \eqref{eq:wave} inside the cone $\{r < t, t < t_0\}
$ which has the form
\[
u(x,t) = Q(r \lambda(t)) + v(x,t)
\]
where the function $v$ has the size and regularity, with
$S:=t\pr_t+r\pr_r$,
\[
 \|\nabla v\|_{L^2}+ \|\nabla S v\|_{L^2} + \| \nabla S^2 v\|_{L^2}
\lesssim |\log t|^{-1}
\]
as well as the pointwise decay
\[
|v(t,x)| \lesssim  |\log t|^{-1}
\]
\label{tmain}\end{theorem}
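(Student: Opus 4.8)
The plan is to follow the renormalization-plus-perturbation scheme of \cite{KST1,KST2}, adapted to the Yang--Mills operator. The first stage is to construct a sequence of approximate solutions $u_k = Q(R) + \sum_{j=1}^{k} v_j$, where $R = r\lambda(t)$ with $\lambda(t) = t^{-1}(-\log t)^\beta$. Plugging $u_0 = Q(R)$ into \eqref{eq:wave} produces an error $e_0$ whose leading part comes from $\partial_{tt}$ acting on $\lambda(t)$; because $\lambda$ is only a logarithmic correction to $t^{-1}$, this error is small in a self-similar sense but rough. One then solves, iteratively, the linearized equation $\calL v_{k+1} = -e_k$ in the self-similar variables $(R, \tau)$ where $\tau = \int \lambda\,dt$, with $\calL$ the linearized Yang--Mills operator around $Q$. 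The crucial structural feature, emphasized in the introduction, is that $\calL$ has a genuine zero-energy \emph{eigenvalue} $\Phi(R) = R^2/(1+R^2)^2$ rather than a resonance; this forces an orthogonality (solvability) condition $\langle e_k, \Phi\rangle = 0$ at each step, which we arrange by allowing a modulation of the parameter in $\lambda$ or by adding a suitable correction term. The errors $e_k$ should decay by (at least) one extra power of $(-\log t)^{-1}$ at each stage, so that after finitely many steps $u_N$ solves \eqref{eq:wave} up to an error $e_N$ that is acceptably small in the relevant norms, including $\|\nabla e_N\|_{L^2}$, $\|\nabla S e_N\|_{L^2}$, $\|\nabla S^2 e_N\|_{L^2}$.

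The second stage is to solve the final perturbative equation $u = u_N + w$ exactly. Writing out \eqref{eq:wave} for $w$ gives a wave equation $\Box w + \frac{2}{r^2}(3Q(R)^2 - 1)w = N(w) + e_N$, where $N(w)$ collects the quadratic and cubic terms. One sets this up inside the light cone $\{r < t < t_0\}$ and runs a fixed-point argument in a function space that encodes the decay $|\log t|^{-1}$ together with the bounds on $w$, $Sw$, $S^2w$ measured in $\dot H^1$. The linear parabolic-type analysis here must handle the potential $\frac{2}{r^2}(3Q(R)^2-1)$, which is time-dependent and scales like the instanton; the key input is an energy estimate for the linearized flow adapted to the vector field $S = t\partial_t + r\partial_r$, commuting $S$ through the equation twice and controlling the commutator terms using the already-established smallness of $u_N - Q(R)$ and of $e_N$. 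Since $u_N$ is only asymptotically in $H^1$, these estimates are at the borderline of the local well-posedness theory of \cite{KT}, so the space must be chosen delicately so that the nonlinearity $N(w)$ is a contraction; the gain will come from the $|\log t|^{-1}$ prefactor and from positive powers of $t$ in the cone.

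The main obstacle I expect is the interplay between the eigenvalue-induced orthogonality condition in the renormalization step and the roughness of the resulting profiles. Unlike the resonance case, each correction $v_j$ must be chosen in the range of $\calL$, and the associated ODEs in $R$ are more singular because of the faster (non-self-similar-up-to-$\nu$) blow-up rate; controlling the growth of $v_j$ and its $S$-derivatives as $R \to \infty$ (matching into the self-similar region near the light cone $r \approx t$) is where the hypothesis $\beta > \tfrac32$ should enter, ensuring both that the series of errors converges and that the final profile lies in (asymptotically) $H^1$. A secondary difficulty is propagating three orders of the scaling vector field $S$ through the final nonlinear estimate without losing the $|\log t|^{-1}$ decay, since each commutation with the singular potential $\frac{2}{r^2}(\cdots)$ threatens to produce terms that are only borderline integrable in time near the vertex of the cone.
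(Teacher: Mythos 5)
Your overall two-stage plan (renormalization of the profile followed by a fixed-point argument) matches the architecture of the paper, but two of the central mechanisms you rely on are not what the paper does, and at least one would not work as described.

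First, you propose to enforce the solvability condition $\langle e_k,\Phi\rangle=0$ at each step ``by allowing a modulation of the parameter in $\lambda$ or by adding a suitable correction term.'' This is not how the paper proceeds, and modulation is not really available here: the blow-up rate $\lambda(t)=t^{-1}(-\log t)^{\beta}$ is \emph{prescribed}, and it is the choice of this rate that makes the leading part of $e_0$, namely $R\Phi'(R)+\Phi(R)$, exactly orthogonal to $\Phi$. For the lower-order (in $|\log t|^{-1}$) pieces of the error, the orthogonality condition simply fails; instead of trying to restore it, the paper splits the construction into two alternating steps. Odd steps solve the elliptic equation $\left(\partial_r^2+r^{-1}\partial_r+2r^{-2}(1-3Q^2)\right)v_{2k+1}=e_{2k}^0$ in the interior $r\ll t$; the non-orthogonal parts of the forcing produce corrections growing like $R^2(\log R)^j$, which are then handled by even steps that solve a singular Sturm--Liouville problem in the self-similar variable $a=r/t$ near the light cone, with singular behavior governed by $(1-a)^{1/2}/(|\log t|+|\log p(a)|)$ (rather than $(1-a)^{1/2+\nu}$ as in the wave-map case). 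Without this alternating elliptic/self-similar decomposition, the iteration does not close: the growth in $R$ of the non-orthogonal part cannot be absorbed purely by choosing $\lambda$.

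Second, your description of the perturbative step as ``an energy estimate for the linearized flow adapted to the vector field $S$,'' and in particular as a ``parabolic-type analysis,'' misses the actual mechanism and understates the difficulty. The linear theory (Theorem~\ref{tlin}) is proved not by a direct energy estimate but by diagonalizing $\calL=-\partial_R^2+\tfrac{15}{4R^2}-\tfrac{24}{(1+R^2)^2}$ with a distorted Fourier transform (Theorem~\ref{thm:GZ}), separating out the zero eigenvalue $\phi_0$, and converting the wave equation into a transport equation on the Fourier side via the transference identity $\widehat{R\partial_R u}=-2\xi\partial_\xi\hat u+\calK\hat u$. Proving the mapping bounds for the operator $\calK$, splitting it into resonant and non-resonant parts, and establishing smallness of high powers of the resonant part (Sections~\ref{sec:spec}--\ref{sec:lin}) is where the bulk of the technical work, and the constraint $\beta>3/2$ in the coupling estimate for $\calR_{ce}$, actually enter. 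A bare $S$-commuted energy estimate against the time-dependent potential $r^{-2}(1-3Q(R)^2)$ would not yield the required $|\log t|^{-N}$ gains or the smallness in $N$ needed to run the contraction. Finally, as a minor point, the relevant local well-posedness result at $H^1$ is the equivariant one of Shatah--Tahvildar-Zadeh~\cite{ST}, not the $s>1$ result of~\cite{KT}.
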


We emphasize that our solutions are just barely better than $H^1$,
in contrast to our earlier work on wavemaps. While $H^1$ local
wellposedness is not known for the general Yang-Mills problem, it is
known in the  equivariant case, see~\cite{ST}. This is important for
our purposes, as it shows that our solutions belong to a class for
which a local wellposedness theory is available. In addition, the
vector field $S$ is required to control the strong singularity in
the nonlinearity  $r^{-2}u^3$ at $r=0$; this is in the spirit of the
method of invariant vector fields in nonlinear wave equations which
allows for improved decay away from the characteristic light-cone
$\{|x|=t\}$. More precisely, one can use elliptic estimates close to
$r=0$ to control the aforementioned singularity.

 \section{The proof of the main theorem}
 \label{sec:mainproof}

 This section contains the proof of Theorem~\ref{tmain}.  The first
 step is construct an arbitrarily good approximate solution to the
 wave equation~\eqref{eq:wave} as a perturbation of a time-dependent
 profile $u_0 = Q(R)$. The result is as follows:

\begin{theorem}
  For each integer $N$ there exists a spherically symmetric
  approximate solution $u_N$ to \eqref{eq:wave} inside the cone $\{r <
  t, t < t_0\}$ which has the form 
\[
u_N(r,t) = Q(R) + v_{10}(r,t) + v_N(r,t),\quad  v_{10}(r,t) := (t\lambda(t))^{-2}
\frac{R^4}{4(1+R^2)^2}
\]
with $\lambda(t)=t^{-1}|\log t|^\beta$, $R=r\lambda(t)$,  and $v_N$ satisfying the pointwise bounds
\[
|v_N(r,t)|+|Sv_N(r,t)| + |S^2 v_N(r,t)| \lesssim \frac{r^2}{t^2 |\log t|}=
\frac{R^2}{(t\lambda(t))^2 |\log t|}
\]
and so that the corresponding error
\[
e_N = \Box u_N -\frac{2}{r^2}u_N(1-u_N^2)
\]
satisfies
\[
|e_N(r,t)|+|Se_N(r,t)| + |S^2 e_N(r,t)| \lesssim  t^{-2} |\log t|^{-N}
\]
\end{theorem}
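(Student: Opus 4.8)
The plan is to construct $u_N$ by a formal iteration scheme, adding successively smaller corrections to the basic profile $Q(R) + v_{10}$ so as to kill the error order by order in powers of $|\log t|^{-1}$. I would introduce the self-similar variable $R = r\lambda(t)$ and the pair of scaling/time variables; writing $\lambda(t) = t^{-1}|\log t|^\beta$, one has $t\partial_t$ acting naturally as $-S$ plus a correction coming from $\lambda$, so that $|\log t|^{-1}$ arises precisely as $\partial_t\log\lambda$ up to lower-order terms. The linearization of the nonlinearity $\frac{2}{r^2}u(1-u^2)$ about $Q(R)$ produces the operator
\[
\calL = -\partial_R^2 - \frac1R \partial_R + \frac{2}{R^2}(1-3Q^2),
\]
and the crucial structural feature, emphasized in the introduction, is that $\calL$ has a \emph{zero energy eigenvalue} with eigenfunction $\Phi(R) = R^2/(1+R^2)^2 \in L^2(R\,dR)$. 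I would set up the iteration so that at each stage we must solve an equation of the form $\calL w_k = f_k$ where $f_k$ is built from the previous corrections; the presence of the $L^2$ eigenfunction forces the solvability (Fredholm) condition $\langle f_k, \Phi\rangle = 0$, and this is what must be arranged by a careful choice of $\lambda(t)$ — the condition $\beta > 3/2$ should emerge from requiring the orthogonality to hold (or to be correctable) while keeping the corrections small.

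The iteration itself I would organize as follows. First, verify that $u_0 + v_{10}$ already improves the error: a direct computation shows $\Box(Q(R)) - \frac{2}{r^2}Q(1-Q^2)$ equals $\partial_{tt}$ applied to $Q(R)$, which scales like $(t\lambda)^{-2}$ times a fixed profile, and $v_{10}$ is precisely the first-order correction chosen so that $\calL(\text{profile of }v_{10}) = $ the leading error profile — this explains the explicit formula $v_{10} = (t\lambda)^{-2} R^4/(4(1+R^2)^2)$. Then inductively: given $u_k$ with error $e_k$ of size $t^{-2}|\log t|^{-k}$ together with the analogous $S$ and $S^2$ bounds, decompose $e_k$ in powers of $|\log t|$ and inner/outer regions in $R$; subtract off the orthogonal projection onto $\Phi$ (which must vanish to leading order by the choice of $\lambda$, the residual being pushed to the next order); solve $\calL w_{k+1} = (\text{leading profile of } e_k)$ by variation of parameters using the two fundamental solutions of $\calL$ (one of which is $\Phi$), and set $u_{k+1} = u_k + w_{k+1}$ with $w_{k+1}$ of size $t^{-2}|\log t|^{-(k+1)}$ relative to the appropriate weights, i.e.\ contributing $r^2/(t^2|\log t|^{k+1})$ pointwise. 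One must track that $S$ and $S^2$ derivatives of each $w_k$ obey the same bounds, which follows because $S$ acts on these functions essentially by differentiation in $\log\lambda$ and multiplication by bounded functions of $R$.

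The main obstacle, and the place where the Yang–Mills problem genuinely differs from the wave-map case of \cite{KST1}, is controlling the solutions $w_k$ near $R = \infty$, i.e.\ near the light cone $r = t$. Because $\calL$ has an $L^2$ eigenvalue rather than a resonance, the non-$L^2$ fundamental solution of $\calL$ grows like $R^2\log R$ (rather than $R^2$), and the variation-of-parameters formula then produces corrections with an extra logarithmic growth in $R$ at each stage; this is the source of the statement in the introduction that the approximate solutions are "much rougher" and asymptotically only lie in $H^1$. I would handle this by a careful region decomposition ($R \lesssim 1$, $1 \lesssim R \lesssim (t\lambda)^{-1/2}$ say, and $R$ up to the cone), using elliptic estimates and the explicit asymptotics of the fundamental system in each region, and by being precise about how the $|\log t|$ and $\log R$ factors interact — in particular one needs $R^2(t\lambda)^{-2} = r^2 t^{-2}$ to remain the governing scale so that the claimed bound $|v_N| \lesssim r^2 t^{-2}|\log t|^{-1}$ survives. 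A secondary technical point is ensuring the whole construction stays inside the cone $\{r < t\}$ and that the functions extend to $r = 0$ with the right parity; near $r = 0$ the strong singularity $r^{-2}$ in the nonlinearity is tamed because every correction carries a factor $R^2 \sim r^2$, so $r^{-2}u_N^3$ stays bounded. Once these bounds are in hand, choosing the number of iteration steps equal to a function of $N$ (roughly $N$ steps) yields $u_N$ with error $t^{-2}|\log t|^{-N}$, completing the proof.
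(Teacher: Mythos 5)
Your outline captures the elliptic half of the construction but misses the other half, and without it the iteration does not close. You propose to iterate by solving $L w_{k+1} = (\text{leading profile of }e_k)$ in the variable $R$, where $L=\partial_R^2 + R^{-1}\partial_R + 2R^{-2}(1-3Q^2)$. This is exactly what the paper does for the \emph{odd} corrections $v_{2k+1}$ (equation~\eqref{vkodd}), and it does improve the error in the region $r\ll t$. However, after each such elliptic step the new error $e_{2k+1}$ contains the term $\partial_t^2 v_{2k+1}$, and since $v_{2k+1}$ grows like $R^2(\log R)^{k}$ at large $R$, this term is of size comparable to $(t\lambda)^{-2}R^2$ --- which near the cone $R\sim t\lambda$ is $O(1)$ and \emph{not} smaller than the previous error. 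Repeating the elliptic step would only make the $R$-growth worse. The paper resolves this with a second, alternating step (the \emph{even} corrections $v_{2k}$, equation~\eqref{vkeven}) in which one drops the $Q$-dependence, passes to the self-similar variable $a=r/t$, and solves a genuinely hyperbolic, singular Sturm--Liouville problem $L_a w = h$ with $L_a=(1-a^2)\partial_a^2 + (a^{-1}-2a)\partial_a - 4a^{-2}$, degenerate at $a=1$. Your proposal nowhere identifies this second ODE; you only mention ``elliptic estimates'' and ``region decomposition'' near the cone, which cannot work because at $r\approx t$ the time derivatives are not subdominant and there is no elliptic equation to invert.

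A second, related omission is the structure near the light cone. The two-solution fundamental system of $L_a$ is $\{\psi_0, \psi_1\}$ with $\psi_1(a)\sim (1-a)^{1/2}$, and the corrections $w_{2k}$ necessarily pick up terms proportional to $(1-a)^{1/2}$. The crucial technical discovery of the paper (expressions~\eqref{eq:crux} and Lemma~\ref{lem:3.6}, Lemma~\ref{advancedode}) is that because the blow up rate is only logarithmically corrected, one is forced to use $(1-a)^{1/2}/b_1$ with $b_1=|\log t|+|\log p(a)|$ rather than $(1-a)^{1/2}/b$; only the former stays in $H^1(da)$ and propagates stably through the iteration (via the algebraic cancellation~\eqref{eq:main_fact2}). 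Without this specific ansatz the iteration would leave $H^1$ at the first even step, and the theorem as stated would fail. Two minor inaccuracies: the non-$L^2$ solution $\Theta$ of $L\Theta=0$ grows like $R^2$, not $R^2\log R$ (the $\log R$ is a subleading contribution that only accumulates over iterations); and the constraint $\beta>3/2$ does not emerge from the renormalization step at all --- Section~\ref{sec:mod} only requires $\beta\ge1$ --- but rather from the linear estimates of Section~\ref{sec:lin}, specifically from needing $\tau$-gains when controlling $\omega^2\calR_{ce}$. Your framing of the orthogonality $\langle e_0^{(0)},\Phi\rangle=0$ as a Fredholm solvability condition is also somewhat misleading: $L$ is limit-point at both ends and the ODE is always solvable by variation of parameters; the orthogonality merely prevents the particular solution $v_{10}$ from growing like $R^2$, and it holds exactly once (for the leading term of $e_0$), not at every stage.
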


The proof of the above theorem is carried out in
Section~\ref{sec:mod}. The description of the approximate solutions
$u_N$ obtained there is much more precise than what is stated above.
In particular, the functions $u_N$ are analytic up to the cone $t =
r$, and the nature of the singularity at the cone is clearly
explained.

Given the approximate solutions $u_N$ constructed above,
we look for a solution $u$ to \eqref{eq:wave} of the form
 \[
 u(t,r)=u_{N}(t,r)+\epsilon(t,r),
 \]
 where $\epsilon$ is to be determined via Banach iteration. The
 equation for $ \epsilon$ is
\[
\Big(-\partial_{t}^{2}+\partial_{r}^{2}+\frac{1}{r}\partial_{r}+\frac{2}{r^{2}}
(1-3 u_N(t,r)^2 - 3 \epsilon(t,r) u_N(t,r)
-\epsilon^2(t,r))\Big)\epsilon(t,r) = e_N
\]
We divide this equation into a linear part and a nonlinear
perturbative term. Based on past experience one would expect that in
the main linear part $u_N$ is simply replaced by $Q(\lambda(t)r)$.
However, in this case that is not enough. Instead, as it turns out,
some of the effects of the first correction term $v_{10}$ also need
to be taken into account. Hence the above equation is rewritten in
the form
\begin{equation}
\Big(-\partial_{t}^{2}+\partial_{r}^{2}+\frac{1}{r}\partial_{r}+\frac{2}{r^{2}}
(1-3Q(\lambda(t)r)^2 -6 Q(\lambda(t)r) v_{10}) \Big)\epsilon = e_N +
\cN(\epsilon) \label{eqnlin}\end{equation} where
\[
\cN(\epsilon) = \frac{2}{r^{2}}\left( 3 \epsilon(u_N^2
-Q(\lambda(t)r)^2
  -2  Q(\lambda(t)r) v_{10}) +     3 \epsilon^2 u_N +\epsilon^3\right)
\]

We first consider the linear problem
\begin{equation}
  \Big(-\partial_{t}^{2}+\partial_{r}^{2}+\frac{1}{r}\partial_{r}+\frac{2}{r^{2}}
    (1-3Q(\lambda(t)r)^2 -6 Q(\lambda(t)r) v_{10}) \Big)\epsilon = f
\label{eqlin}\end{equation}
where the principal spatial part is given by the selfadjoint operator
\[
L_t = -\partial_{r}^{2}-\frac{1}{r}\partial_{r}-\frac{2}{r^{2}}
    (1-3Q(\lambda(t)r)^2)
\]
This is time dependent, but is obtained by rescaling from
the operator
\[
L =  -\partial_{r}^{2}-\frac{1}{r}\partial_{r}-\frac{2}{r^{2}}(1-3Q(r)^2)
\]
We remark that, as proved in the next section, $L$ is a nonnegative
operator.

A difficulty that we face in solving \eqref{eqnlin} iteratively is
in handling the singularity at $0$ in the $\epsilon^3$ term in
$\cN(\epsilon)$. Energy estimates on $\epsilon$ do not suffice, so
we introduce the scaling vector field
\[
S := t \partial_t + r \partial_r
\]
and we seek to simultaneously bound $\epsilon$, $S\epsilon$ and
$S^2 \epsilon$ in a norm that is a scale adapted version of the $H^1$
norm,
\[
\|\eps\|_{H^1_N} := \sup_{0<t<t_0} |\log t|^{N-\beta-1} \big(
\|L_t^{\frac12} \eps\|_{L^2(rdr)}+\|\pr_t
\eps\|_{L^2(rdr)} + \lambda(t)   |\log t|^{-\beta} \|\eps\|_{L^2(rdr)} \big)
\]
For $f$, on the other hand, we just use uniform $L^2$ bounds,
\[
\|f\|_{L^2_N} := \sup_{0<t<t_0} \lambda^{-1}(t)|\log
t|^N\|f(t)\|_{L^2(rdr)}
\]
The main result of the linear theory is the following theorem. It is
proved in Section~\ref{sec:lin}.

\begin{theorem}
  There exists a linear operator $\Phi$, so that for each $f$ the
  function $\epsilon = \Phi f$ solves \eqref{eqlin}, and for all large
  enough $N_0 \gg N_1 \gg N_2$ it satisfies the bounds
\begin{align}
 \| \Phi f\|_{H^1_{N_0}}  &\lesssim  \frac1{N_0} \| f\|_{L^2_{N_0}}
\label{kbd0}\\
  \| S\Phi f\|_{H^1_{N_1}}  &\lesssim  \frac1{N_1}(\|Sf\|_{L^2_{N_1}} +
\| f\|_{L^2_{N_0}}) \label{kbd1}\\
   \| S^2\Phi f\|_{H^1_{N_2}}  &\lesssim  \frac1{N_2}(\|S^2f\|_{L^2_{N_2}}
   + \|Sf\|_{L^2_{N_1}} +
\| f\|_{L^2_{N_0}}) \label{kbd2}\end{align} 
 The implicit constants here depend only on~$\beta$.
\label{tlin}\end{theorem}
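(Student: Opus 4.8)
The plan is to renormalize to self-similar variables — in which the principal spatial operator becomes \emph{time-independent} — diagonalize it, and solve the resulting forced oscillator equations by a Duhamel parametrix anchored at the blow-up time. Set $R=\lambda(t)r$, let $\tau$ be the new time with $d\tau=\lambda\,dt$ (so $\tau\to+\infty$ as $t\to0$) and put $\eps(t,r)=\lambda(t)w(\tau,R)$, which is unitary on $L^2(r\,dr)$. Since $L_t$ is the rescaling of the fixed operator $L$, dividing \eqref{eqlin} by the appropriate power of $\lambda$ turns it into
\[
\partial_\tau^2 w + L w \;=\; g \;-\; \frac{12\,Q(R)\,v_{10}}{R^2}\,w \;+\; \mu\,\calK_1 w \;+\; (\partial_\tau\mu)\,\calK_2 w ,
\]
where $g$ is the rescaled forcing with $\|g(\tau)\|_{L^2(R\,dR)}\sim\lambda^{-2}\|f(t)\|_{L^2(r\,dr)}$, the coefficient $\mu=\dot\lambda/\lambda^2=O(|\log t|^{-\b})$ is small, $\calK_{1,2}$ are first order in $R\partial_R,\partial_\tau$, and the last ``potential'' satisfies $12Qv_{10}/R^2\sim|\log t|^{-2\b}V_1(R)$ with $V_1$ bounded and $O(R^{-2})$ at infinity. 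Now $L$ is nonnegative and selfadjoint on $L^2(r\,dr)$, with absolutely continuous spectrum $[0,\infty)$ and the \emph{simple zero eigenvalue} spanned by $\phi_0(R)\propto R^2/(1+R^2)^2$ (the scaling mode $\Lambda Q$, which lies in $L^2(r\,dr)$). Let $\calF$ be the associated distorted Hankel transform and write $w=x_0(\tau)\phi_0+\int_0^\infty x(\tau,\xi)\phi(\cdot,\xi)\rho(\xi)\,d\xi$. The equation becomes the coupled system
\[
\partial_\tau^2 x(\tau,\xi)+\xi\,x(\tau,\xi)=\widehat g(\tau,\xi)+[\text{couplings}], \qquad \partial_\tau^2 x_0(\tau)+c(\tau)\,x_0(\tau)=\widehat g_0(\tau)+[\text{couplings}],
\]
where $c(\tau)=12(t\lambda)^{-2}\langle V_1\phi_0,\phi_0\rangle>0$ is the ($\sim|\log t|^{-2\b}$) ``mass'' the $v_{10}$ potential puts on the zero mode, and the couplings each carry a factor $\mu$, $\partial_\tau\mu$, or $|\log t|^{-2\b}$ and involve $\calF\calK_{1,2}\calF^{-1}$ (acting with the expected $\xi\partial_\xi$ transference structure) and the off-diagonal matrix elements of $V_1$; boundedness of the transference operator on the relevant weighted spaces is imported from the spectral theory of $L$.

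I would then define $\Phi$ on the continuous part by Duhamel from $\tau=+\infty$,
\[
x(\tau,\xi)=\int_\tau^\infty\frac{\sin\!\big((\sigma-\tau)\sqrt\xi\big)}{\sqrt\xi}\,\big(\widehat g+[\text{couplings}]\big)(\sigma,\xi)\,d\sigma ,
\]
and similarly for $x_0$ using the fundamental system of $\partial_\tau^2+c(\tau)$ in place of the oscillator kernel, with the two free constants chosen as described in the last paragraph; the coupled system is then closed by a contraction, the smallness of $\mu$, $\partial_\tau\mu$, $|\log t|^{-2\b}$ together with the gain from the forcing term supplying the contraction constant. For the forcing, the oscillator kernel never amplifies ($|\sin(s\sqrt\xi)/\sqrt\xi|\le\min(s,\xi^{-1/2})$), so one reduces to the elementary time integral of the sharply weighted data,
\[
\int_0^t \lambda(t')\,|\log t'|^{-N}\,dt' \;=\; \int_{|\log t|}^\infty s^{\,\b-N}\,ds \;=\; \frac{|\log t|^{\,\b+1-N}}{N-\b-1},
\]
which reproduces exactly the weight $|\log t|^{N-\b-1}$ in $\|\cdot\|_{H^1_N}$ and produces the factor $1/N$; the $\partial_t\eps$ term and the weaker $\lambda|\log t|^{-\b}\|\eps\|_{L^2}$ term are estimated the same way. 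This gives \eqref{kbd0}.

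For \eqref{kbd1} and \eqref{kbd2} I would commute $S=t\partial_t+r\partial_r$ through \eqref{eqlin}. Since $\Box$ is scaling-covariant, $[\Box,S]=2\Box$; and since $S(\lambda r)=(t\dot\lambda/\lambda+1)R=-\tfrac{\b}{|\log t|}R+\dots$ is $O(|\log t|^{-1})$, the commutator of $S$ with the potential $\tfrac{2}{r^2}(1-3Q(\lambda r)^2-6Q(\lambda r)v_{10})$ is lower order: the $r^{-2}$ yields a bounded multiple of the potential itself, and $S$ hitting $Q(\lambda r)$ or $v_{10}$ gains $|\log t|^{-1}$. Hence $S\Phi f$ solves an equation of the form \eqref{eqlin} with right-hand side $Sf$ plus terms already controlled by $\|\Phi f\|_{H^1_{N_0}}$; applying \eqref{kbd0} with the smaller index $N_1$ — spending part of the logarithmic budget on the commutator terms via the bound already proved for $\Phi f$ — yields \eqref{kbd1}, and one further iteration with index $N_2$ gives \eqref{kbd2}. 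This is exactly why one needs $N_0\gg N_1\gg N_2$.

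The main obstacle — the genuinely new point relative to the wave-map and $\R^3$ semilinear analogues — is the zero \emph{eigenvalue}. Stripped of all corrections the zero-mode component would obey $\partial_\tau^2 x_0=(\text{forcing})$, whose homogeneous solutions grow linearly in $\tau$; since $\tau\sim|\log t|^{\b+1}$, and since two time integrations amplify any coefficient by $\sim|\log t|^{2\b+2}$, a naive Duhamel-from-infinity estimate overshoots the $\|\cdot\|_{H^1_N}$ budget by a large power of $|\log t|$ and, for the same reason, the $-6Q(\lambda r)v_{10}/r^2$ term cannot be moved to the right-hand side. Keeping it on the principal side endows the $x_0$ equation with the strictly positive, slowly decaying mass $c(\tau)\sim|\log t|^{-2\b}$: the accumulated phase $\int^\tau\sqrt{c}\,d\sigma\sim|\log t|$ diverges, so the homogeneous solutions oscillate instead of growing linearly, and the residual mild (WKB-amplitude) growth is then absorbed by the correct choice of the two free constants — this is where, in the full scheme, the orthogonality condition ultimately enters, although in our setup it is cleanest to impose it in the construction of the approximate solutions $u_N$ rather than in the linear theory. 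Once the $x_0$ component is under control the remaining work is the weighted-Fourier machinery of our earlier papers, but carried out with essentially no slack because these solutions are only barely in $H^1$.
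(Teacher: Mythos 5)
The commutation step for \eqref{kbd1} and \eqref{kbd2} via $[\Box,S]=2\Box$, the $O(|\log t|^{-1})$ gain when $S$ hits $Q(\lambda r)$ or $v_{10}$, and the resulting cascade $N_0\gg N_1\gg N_2$ are exactly as in the paper, as is the overall plan (pass to $(\tau,R)$, diagonalize via the distorted Fourier transform, backward Duhamel). But the proposal is wrong on the two points it flags as the crux.

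First, the zero--mode equation you write down is not the right one, and the proposed remedy does not survive the computation. Conjugating the wave operator to $(\tau,R)$ produces, on the discrete part, the operator $-\partial_\tau(\partial_\tau-\omega)$ with $\omega=\lambda_\tau/\lambda\sim\tau^{-\beta/(\beta+1)}$; equivalently, in your $L^2(r\,dr)$-unitary normalization $\eps=\lambda w$, the operator is $\partial_\tau^2+\omega\partial_\tau$. The first-order $\omega\partial_\tau$ term is \emph{not} a correction: it is $O(\omega)$ uniformly and dominates the structure. ``Stripped of all corrections'' the equation is therefore not $\partial_\tau^2 x_0=g_0$, and the backward Green's function $U_0(\tau,\sigma)=\lambda(\tau)\int_\tau^\sigma\lambda^{-1}$ is already bounded by $\omega^{-1}(\tau)$ with no mass term needed. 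The claimed mass $c(\tau)\sim\omega^2$ coming from the $-6Qv_{10}/r^2$ potential, which you propose to keep on the principal side to make $x_0$ oscillate, in fact \emph{cancels exactly} against the $ee$-entry of the squared transference operator: one computes $\calJ_{ee}=\tfrac35$ (from $v_{10}$) and $(\calK_{nd}^2)_{ee}=\calK_{ec}\calK_{ce}=-\tfrac35$, so the diagonal coupling at $O(\omega^2)$ is identically zero. Your instinct that $v_{10}$ must stay in the linear operator is correct, but the reason is this cancellation — without it the diagonal $-\tfrac35\omega^2 x_0$ term cannot be iterated away, since $T_e$ costs $\omega^{-1}\tau$ and $\omega^2$ only recovers $\omega^2$, a net divergent $\tau^{1/(\beta+1)}$ loss. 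The $\tau\omega^{-1}\sim|\log t|^{2\beta+1}$ loss from the zero mode is then simply \emph{accepted} and is precisely the gap between the $|\log t|^{N}$ weight in $L^2_N$ and the weaker weights $|\log t|^{N-\beta-1}$ and $\lambda|\log t|^{-\beta}$ built into $H^1_N$; no oscillation is involved.

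Second, the claim that ``the coupled system is then closed by a contraction'' with the smallness of $\omega,\dot\omega,|\log t|^{-2\beta}$ and the forcing gain supplying the constant is too optimistic. After the decomposition into $P$ and $Q$, one can only show $\|TQ\|\lesssim 1$, not $\ll 1$: the term $-2\omega\calK_0 D_\tau$ is not small, because $\omega$ integrates to something $O(1)$ along the backward flow. Convergence of the Neumann series requires instead the estimate $\|(TQ_b)^n\|\ll1$ for large $n$, which is where the real work lies: one splits $\calK_0$ into a near-diagonal and an off-diagonal piece, gains a power of $\tau$ for the off-diagonal piece from the oscillations of the Duhamel kernel (the $\cos(u\pm v)$ integration by parts in frequency), and gains a combinatorial $(C|\log\epsilon|)^n/n!$ factor for the near-diagonal piece by exploiting the confined backward-in-time support of the iterated flow. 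None of this is in your sketch, and the ``smallness of $\mu$'' does not substitute for it.
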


We note that $\Phi$ is in effect the forward solution operator for the
equation \eqref{eqlin}. In this theorem $f$ is not required to be
supported inside the cone $\{r \leq t\}$. However, if that is the case
the $\Phi f$ is also supported inside the cone due to the finite speed
of propagation.

  In order to prove the above theorem it is
convenient to pass to different coordinates in which the Schr\"odinger
operator is no longer time dependent. Specifically, introduce new
coordinates $(\tau,R)$ given by
 \[
\tau=\int_{t}^{t_{0}}\lambda(s)ds, \qquad R = \lambda(t) r
\]
Then, denoting
\[
\tileps(\tau, R):=R^{\frac{1}{2}}\epsilon(t,r), \qquad \tilf(\tau,
R) := \lambda^{-2}R^{\frac{1}{2}} f(t,r)
\]
 where $\lambda$ is now understood as a function of $\tau$, the
 equation \eqref{eqlin} becomes
\[
\left[-(\partial_{\tau}+\frac{\lambda_{\tau}}{\lambda}R\partial_{R})^{2}+
\frac{1}{4}(\frac{\lambda_{\tau}}{\lambda})^{2}+
\frac{1}{2}\partial_{\tau}(\frac{\lambda_{\tau}}{\lambda})\right]\tileps-\calL\,\tileps-\frac{12}{R^{2}}Q(R)v_{10}\,\tileps
=\tilf,
\]
 where
\[
\calL:=-\frac{\partial^{2}}{\partial R^{2}}+\frac{5}{14
  R^{2}}-\frac{24}{(1+R^{2})^{2}}
\]
The spectral properties of the operator $\calL$ are studied in
Section~\ref{sec:spec}. These are essential in the proof of
Theorem~\ref{tlin} in Section~\ref{sec:lin}.

We next continue the proof of our main result with the perturbative
argument for the equation \eqref{eqnlin}. By the construction in
Section~\ref{sec:mod} we know that for arbitrarily large $N_0 \gg
N_1 \gg N_2$ we can find an approximate solution $u_N$ so that the
corresponding error $e_N$ satisfies
\[
\|e_N\|_{Y} := \| e_N\|_{L^2_{N_0}} + \| S e_N\|_{L^2_{N_1}} + \|
S^2 e_N\|_{L^2_{N_2}} \ll1
\]
where the smallness is gained by taking $t_0$ small enough. It is
important to note that, even though $e_N$ has limited regularity,
the roughness is relative to the self-similar variable
$a:=\frac{r}{t}$ which satisfies $Sa=0$. For this reason $S^j e_N$
does not lose any regularity. We iteratively construct the sequence
$\{(\epsilon_j, f_j)\}_{j \geq 0}$ by
\[
f_0 := e_N, \qquad \epsilon_j := \Phi f_j, \qquad f_{j+1} := e_N +
\cN(\epsilon_j)
\]
and show that it converges to a solution $\epsilon$
of~\eqref{eqnlin} in the norm
\[
\| \epsilon\|_{X} :=  \| \epsilon \|_{H^1_{N_0}} + \| S \epsilon
\|_{H^1_{N_1}} + \| S^2 \epsilon \|_{H^1_{N_2}}
\]
 By Theorem~\ref{tlin} we know that
$\Phi$ is a bounded operator with small norm,
\begin{equation}\label{eq:Phinorm}
\| \Phi f\|_{X} \lesssim N_0^{-1} \|f\|_{Y}
\end{equation}
The proof is concluded if we show that the nonlinear term satisfies a
similar bound:

\begin{lemma}
The map $f \mapsto \cN(\Phi f)$ is locally Lipschitz from $Y$ to
$Y$, with Lipschitz constant of size $O(N_2^{-1})$.
\end{lemma}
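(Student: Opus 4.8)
The plan is to factor everything through Theorem~\ref{tlin}. Writing $\epsilon_i=\Phi f_i$, the bounds \eqref{kbd0}--\eqref{kbd2} give $\|\epsilon_1-\epsilon_2\|_X\lesssim N_2^{-1}\|f_1-f_2\|_Y$, with the $H^1_{N_0}$, $S$-$H^1_{N_1}$ and $S^2$-$H^1_{N_2}$ components separately carrying the gains $N_0^{-1}$, $N_1^{-1}$, $N_2^{-1}$. So it is enough to bound the three pieces $\|\cdot\|_{L^2_{N_0}}$, $\|S\cdot\|_{L^2_{N_1}}$, $\|S^2\cdot\|_{L^2_{N_2}}$ of
\[
\cN(\epsilon_1)-\cN(\epsilon_2)=\frac{2}{r^2}\Big(3g_1(\epsilon_1-\epsilon_2)+3u_N(\epsilon_1+\epsilon_2)(\epsilon_1-\epsilon_2)+(\epsilon_1^2+\epsilon_1\epsilon_2+\epsilon_2^2)(\epsilon_1-\epsilon_2)\Big),\quad g_1=2Q(\lambda r)v_N+(v_{10}+v_N)^2,
\]
by $\|f_1-f_2\|_Y$ with constant $O(N_2^{-1})$. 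Since $Sr^{-2}=-2r^{-2}$, the weight $r^{-2}$ is $S$-homogeneous, so applying $S$, $S^2$ and the Leibniz rule merely spreads the vector fields over the factors: those on $u_N$, $v_{10}$, $v_N$ reproduce the same pointwise bounds by Section~\ref{sec:mod}, those on $Q(\lambda r)$ gain an $|\log t|^{-1}$ since $SR=-(\beta/|\log t|)R$. The bookkeeping point is that at most two $S$'s land on any single $\epsilon_i$-factor, and the term in which exactly two do inherits the $N_2^{-1}$ of \eqref{kbd2}, while all others carry a smaller combinatorial factor and/or an extra small coefficient. Thus everything reduces to a fixed list of multilinear $L^2(rdr)$ estimates for $r^{-2}w_1w_2w_3$ and $r^{-2}w_1w_2$, after which the $\lambda$- and $|\log t|$-weights of the $X$ and $Y$ norms are matched.

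First I would split the $r$-integral at $r=\delta t$. On $\{\delta t\le r<t\}$ the weight $r^{-2}\lesssim_\delta t^{-2}$ is harmless, $\|L_t^{1/2}w\|_{L^2}+\|w\|_{L^2}$ dominates $\|\nabla w\|_{L^2}$, and the two-dimensional Sobolev embedding $H^1\hookrightarrow L^6$ controls the products, the coefficients $v_{10}$, $v_N$, $g_1/r^2$ supplying negative powers of $|\log t|$ as surplus. The hard part is $\{r\le\delta t\}$, where $r^{-2}$ is genuinely singular and $\epsilon$, lying only at the $H^1$ threshold, has no pointwise control from Sobolev alone in two dimensions. Here one uses that near $r=0$ the potential of $L_t$ equals $+4r^{-2}+O(1)$, so $L_t$ is a Bessel-type operator of angular momentum two there, satisfying
\[
\|w/r^2\|_{L^2(rdr)}+\|\partial_r w/r\|_{L^2(rdr)}+\|\partial_r^2w\|_{L^2(rdr)}\lesssim\|L_tw\|_{L^2(rdr)}+\text{l.o.t.}\quad\text{near }r=0
\]
(the identities being part of the spectral analysis of Section~\ref{sec:spec}), together with the elementary estimate $\|w\|_{L^\infty}\lesssim\|w/r\|_{L^2(rdr)}^{1/2}\|\partial_r w\|_{L^2(rdr)}^{1/2}$ for radial $w$ with finite right side, and its consequence $\|w/r\|_{L^\infty}\lesssim\|L_tw\|_{L^2}$ near $r=0$. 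To use the first inequality I would trade $L_t\epsilon$, via \eqref{eqlin}, for $-\partial_t^2\epsilon+\tfrac{12}{r^2}Q(\lambda r)v_{10}\epsilon-(f_1-f_2)$, then write $\partial_t=t^{-1}(S-r\partial_r)$ so that $\partial_t^2\epsilon$ becomes $t^{-2}$ times a degree-two polynomial in $S$ and $r\partial_r$ applied to $\epsilon$; on shells $r\lesssim\delta t$ the $t^{-2}(r\partial_r)^2$ piece is a small perturbation of $L_t$ (there $r^{-2}\gg t^{-2}$) and is absorbed, yielding $H^2$-type control of $\epsilon$ near $r=0$. The same works for $S\epsilon$: its commuted equation $L_t(S\epsilon)=-\partial_t^2(S\epsilon)+\text{l.o.t.}$ (with $[S,L_t]=-2L_t+(\text{a small }r^{-2}\text{-potential})$) involves $\partial_t(S^2\epsilon)$, which is controlled precisely because the $H^1_{N_2}$ norm of $S^2\epsilon$ contains the $\partial_t$-component. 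By contrast $S^2\epsilon$ gets no elliptic upgrade, only the energy-level bounds $\|S^2\epsilon/r\|_{L^2}+\|S^2\epsilon\|_{L^\infty}\lesssim\|L_t^{1/2}S^2\epsilon\|_{L^2}$.

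That is enough: in each Leibniz term $S^2$ falls on at most one factor, and the remaining factors — carrying at most one $S$ — are placed in $L^\infty$ or $L^4$ using the elliptic control just obtained, e.g. $\|r^{-2}\epsilon^2S^2\epsilon\|_{L^2(r\le\delta t)}\le\|\epsilon/r\|_{L^\infty}\|\epsilon/r\|_{L^2(rdr)}\|S^2\epsilon\|_{L^\infty}$ and $\|r^{-2}\epsilon(S\epsilon)^2\|_{L^2(r\le\delta t)}\le\|\epsilon/r^2\|_{L^2(rdr)}\|S\epsilon\|_{L^\infty}^2$; the quadratic terms are handled the same way (using $u_N\approx1$ near $r=0$, so that e.g. $\|r^{-2}u_N(S\epsilon)^2\|_{L^2}\lesssim\|S\epsilon/r\|_{L^2}\|S\epsilon/r\|_{L^\infty}$ needs the $S\epsilon$-upgrade), and the linear term is easier because $r^{-2}g_1$ is bounded near $r=0$, of size $O((t^2|\log t|)^{-1})$, as $v_N$ and $v_{10}^2$ vanish like $r^2$ there. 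Finally, collecting the $\lambda$- and $|\log t|$-weights — in each estimate the output $L^2_{N}$ weight is matched by the factor norms up to a bounded surplus, with the factor $N_2^{-1}$ surviving exactly once from \eqref{kbd2} — gives $\|\cN(\Phi f_1)-\cN(\Phi f_2)\|_Y\lesssim N_2^{-1}\|f_1-f_2\|_Y$ on $Y$-balls; local Lipschitzness is then clear since the quadratic and cubic parts contribute constants proportional to the (small) $X$-radius of $\Phi$ applied to the ball. The main obstacle is the near-origin elliptic estimate: organizing the shell-by-shell elliptic regularity, verifying that exactly $\epsilon$ and $S\epsilon$ (but not $S^2\epsilon$) can be upgraded, and threading all the weights through correctly.
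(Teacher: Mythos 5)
Your high-level structure tracks the paper's: split $\cN$ into the easy linear piece $g\epsilon$ and the genuinely nonlinear piece $r^{-2}(3u_N\epsilon^2+\epsilon^3)$, handle the outer region $r\gtrsim t$ by pointwise/Sobolev control, and attack the singular $r^{-2}$ near $r=0$ by an elliptic bootstrap fed by the equation and the vector field $S$. The main divergence is in how you control $r^{-2}(S\epsilon)^2$. The paper avoids any second elliptic upgrade: after establishing $\|r^{-2}\epsilon\|_{L^2}\lesssim\lambda|\log t|^{1-N_2}\|f\|_Y$ via the degenerate elliptic estimate \eqref{eq:ell_est}, it gets $\|r^{-1}S\epsilon\|_{L^4}$ by interpolating that against $\|S^2\epsilon\|_{L^\infty}$ from \eqref{linftye}. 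You instead propose re-running the elliptic argument on $S\epsilon$ itself to get pointwise control of $S\epsilon/r$.

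That route can be made to work, but your sketch glosses over the step that would actually require care. When you commute the equation with $S$ and rewrite $\partial_t^2(S\epsilon)$ via $t\partial_t=S-r\partial_r$, the naive expansion $(t\partial_t)^2(S\epsilon)=(S-r\partial_r)^2(S\epsilon)$ produces $S^3\epsilon$, which is \emph{not} controlled by the $X$ norm (which stops at $S^2$). To close, one must keep the mixed form
\[
(t\partial_t)^2(S\epsilon)=t\partial_t S^2\epsilon-r\partial_r S^2\epsilon+(r\partial_r)^2S\epsilon,
\]
note that $t\partial_t S^2\epsilon$ is controlled by the $\partial_t$-component of $\|S^2\epsilon\|_{H^1_{N_2}}$ (as you say), that $r\partial_r S^2\epsilon$ is controlled on $\{r\le\delta t\}$ by $t\|\nabla S^2\epsilon\|_{L^2}$ via \eqref{ent}, and that $(r\partial_r)^2 S\epsilon$ must be absorbed into the degenerate elliptic operator exactly as in the paper's argument for $\epsilon$. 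Your "the same works for $S\epsilon$" elides precisely this point; written as you wrote it, a reader would reasonably conclude one needs $S^3\epsilon$. The paper's interpolation sidesteps the whole issue and also happens to produce exactly the $L^4$ control that the trilinear Hölder needs, whereas your $L^\infty$ target is stronger and you never verify that the $\lambda$- and $|\log t|$-weights close for it. In short: same skeleton, different key estimate (interpolation vs.\ second elliptic bootstrap); your alternative is plausible but sketchy at the one step where the $S$-hierarchy runs out, and the weight bookkeeping for the stronger $L^\infty$ bound is left undone.
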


\begin{proof}
We denote $\epsilon = \Phi f$, and successively consider the linear
and the nonlinear terms in $\cN(\epsilon)$.

{\bf A. The linear term} has the form
\[
\cN_1(\epsilon) = g \epsilon, \qquad g = \frac{2}{r^2}(u_N^2 -
Q(\lambda(t)r)^2 - 2Q(\lambda(t)r) v_{10})
\]
By construction we have
\[
|g| +|Sg| +|S^2 g| \lesssim \frac{1}{t^2|\log t|} =\lambda(t)^2
|\log t|^{-2\beta-1}
\]
which directly leads to
\[
\| \cN_1(\epsilon)\|_{Y} \lesssim \|\epsilon\|_{X}.
\]
where only the $L^2$ components of the $H^1_{N_j}$ norms are being
used (as part of the space~$X$). The desired conclusion now follows
from~\eqref{eq:Phinorm}.

{\bf B. The nonlinear term} has the form
\[
\cN_2(\epsilon) = \frac{2}{r^2} (3u_N \epsilon^2+\epsilon^3)
\]
The coefficient $u_N$ satisfies
\[
|u_N| + |Su_N| + |S^2 u_N| \lesssim 1
\]
so we can neglect it. The main difficulty here arises from the
singular factor $\frac{1}{r^{2}}$ on the right-hand side. To address
that we  will establish several bounds. The first is a pointwise bound,
\begin{equation}
 |w| \lesssim  |\log t|^{1+2\beta-N} \|w\|_{H^1_N}
\label{linfty}\end{equation}
which applied to $\epsilon$, $S\epsilon$ and $S^2 \epsilon$ yields
\begin{equation}
 \|S^k \epsilon(t)\|_{L^\infty} \lesssim \frac{1}{N_k}
 |\log t|^{1+2\beta-N_k}\|f\|_{Y}, 
\qquad k = 0,1,2
\label{linftye}\end{equation}
The second is a weighted $L^2$ bound, namely
\begin{equation}
\| r^{-2} \epsilon(t)\|_{L^2} \lesssim  \lambda(t) |\log t|^{-N_2+1}   \| f\|_{Y}
\label{l2s2}\end{equation}
Interpolating between the $k=2$ case of \eqref{linftye}
and \eqref{l2s2} we also obtain
\begin{equation}
\| r^{-1} S\epsilon(t)\|_{L^4} \lesssim \frac{1}{N_2^\frac12} \lambda(t)^\frac12 
|\log t|^{-N_2+1+\beta}   \| f\|_{Y}
\label{l4s}\end{equation}
By \eqref{linftye} with $k=0$ and \eqref{l2s2}
we obtain
\begin{equation}\label{bootstr1}\begin{split}
\big\|\cN_1(\epsilon)(t) \big\|_{L^{2}_{N_0}}
&\lesssim
\big[\|\epsilon(t)\|_{L^{\infty}}+\|\epsilon(t)\|_{L^{\infty}}^{2}\big]\Big\|\frac{\epsilon(t)}{r^{2}}
\Big\|_{L^{2}}\\& 
\lesssim \frac{1}{N_0} \lambda(t) |\log t|^{2+2\beta -N_0-N_2} (\|f\|_Y^2+\|f\|_{Y^3})
\end{split}\end{equation}
Using also \eqref{linftye} with $k=1$ we similarly obtain
\begin{equation}\label{bootstr2}
\big\|S\cN_1(\epsilon) (t) \big\|_{L^{2}_{N_0}}
\lesssim \frac{1}{N_1} \lambda(t) |\log t|^{2+2\beta -N_1-N_2} (\|f\|_Y^2+\|f\|_{Y^3})
\end{equation}
Finally, due to \eqref{linftye} with $k=2$ and \eqref{l4s} we also
have
\begin{equation}\label{bootstr3}
\big\|S^2\cN_1(\epsilon) (t) \big\|_{L^{2}_{N_0}}
\lesssim \frac{1}{N_2} \lambda(t) |\log t|^{2+2\beta -2N_2} 
(\|f\|_Y^2+\|f\|_{Y^3})
\end{equation}
Together, the bounds \eqref{bootstr1}-\eqref{bootstr3} suffice to 
obtain the conclusion of the lemma provided that
$N_2$ is large enough.

It remains to prove the bounds  \eqref{linfty} and \eqref{l2s2}. For the
operator $L$ we have the straightforward elliptic bound
\[
 \|\nabla w\|_{L^2} + \| r^{-1} w\|_{L^2} \lesssim
\| L^\frac12 w\|_{L^2} + \|w\|_{L^2}
\]
By rescaling this gives
\begin{equation}
\begin{split}
\|\nabla w\|_{L^2} + \|r^{-1} w\|_{L^2} \lesssim &\ \| L_t^\frac12
w\|_{L^2} + \lambda(t) \|w\|_{L^2}
\\ \lesssim &\ |\log t|^{1+2\beta -N}  \| w \|_{H^1_N}
\end{split}
\label{ent}\end{equation}
Then~\eqref{linfty} follows from the point-wise bound for
spherically symmetric functions in~$\R^2$
\[
\| u\|_{L^\infty} \lesssim \|\nabla u\|_{L^2} + \|r^{-1} u\|_{L^2}
\]
Next we turn our attention to the bound \eqref{l2s2}. Due to
\eqref{linftye} ($k=0$) it suffices to consider the region $r \leq t/2$. In this
region we use the scaling vector field $S=t\pr_t+r\pr_r$ to derive a
stronger equation for $\epsilon$. From
\[
t\partial_t = S - r \partial_r
\]
one infers that
\[
t^2 \partial_t^2 \epsilon  + t \partial_t \epsilon = -S^2 \epsilon +
r^2 \partial_r^2 \epsilon + 2 t \partial_t S \epsilon
\]
and further
\[
\Big(\frac{t^{2}-r^{2}}{t^2}\partial_{rr}+\frac{1}{r}\partial_{r}-
\frac{4}{r^{2}}\Big) \epsilon = t^{-2}(-S^2 \epsilon + 2 t
\partial_t S \epsilon -t\partial_t \epsilon) - \big(\Box +
\frac{4}{r^2}\big)\epsilon
\]
Due to \eqref{eqlin} we can estimate the last term by
\[
\left |\left(\Box + \frac{4}{r^2}\right)\epsilon\right| \lesssim |f|
+ \lambda^{2}|\epsilon|
\]
This leads to the bound
\[
\begin{split}
\left\|\left(\frac{t^{2}-r^{2}}{t^2}\partial_{rr}+\frac{1}{r}\partial_{r}-
\frac{4}{r^{2}}\right) \epsilon\right\|_{L^2} \lesssim&\
t^{-2}[\|S^2 \epsilon \|_{L^{2}} + \|(t\pr_t S) \epsilon\|_{L^{2}} +
\|(t\pr_t)\epsilon\|_{L^{2}}] + \\
& + \lambda^{2}\|\epsilon\|_{L^{2}}+\|f\|_{L^2}
\\
 \lesssim &\ \lambda(t)|\log
 t|^{1-N_2}\|\epsilon\|_{X}+\|f\|_{L^{2}}
\end{split}
\]
Taking also into account \eqref{ent} applied to $\epsilon$ with
$N=N_0$, the estimate \eqref{l2s2} would follow from the fixed time
bound
\begin{equation}
\label{eq:ell_est} \|r^{-2} \epsilon\|_{L^2} \lesssim \|t^{-1}
\nabla \epsilon\|_{L^2} + \|t^{-1} r^{-1} \epsilon\|_{L^2} +
\left\|\left(\frac{t^{2}-r^{2}}{t^2}\partial_{rr}+\frac{1}{r}\partial_{r}-
\frac{4}{r^{2}}\right) \epsilon\right\|_{L^2}
\end{equation}
This rescales to $t=1$, in which case it is a standard local
elliptic estimate near $r=0$.
\end{proof}

\section{The renormalization step}
\label{sec:mod}

In this section, roughly following~[KST1], we show how to construct an
arbitrarily good approximate solution to the wave
equation~\eqref{eq:wave} as a perturbation of a time-dependent
profile
\begin{equation}\label{eq:u0def}
u_0 = Q(R), \qquad R =r
\lambda(t),\quad \Phi(R)=\frac{R^2}{(1+R^2)^2}
\end{equation}
with $\lambda(t)$ a logarithmic correction to the self-similar ansatz
\[
\lambda(t)=t^{-1}(-\log t)^{\beta},\,\beta\ge 1
\]
In fact, for ease of notation we will take $\beta\in\Z$; the general
case is only a minor modification of the integral one and we leave
it to the reader.  This ansatz is quite natural in light of a
necessary orthogonality condition which makes its appearance in the
ensuing considerations. We note, however, that by contrast to
~[KST1], the approximate solutions here are much rougher, and indeed
asymptotically only lie in $H^{1}$, the threshold for local
well-posedness of the critical Yang-Mills equation. The reason for
this is the much more singular nature of the ODE's arising in the
renormalization step, due to the different blow up rate.

 The following is the main theorem of the first half of the paper.
 Throughout this section, we will work on the light-cone $\{r<
 t\}$ (in particular, all functions in this section will be defined
 only on $r\le t$).

\begin{theorem}
  \label{thm:sec2} Let $k \in \Nat$. There exists an approximate
  solution $u_{2k-1}\in H^{1}$ for \eqref{eq:wave} of the asymptotic
  form (as $R\rightarrow \infty$)
  \[
  u_{2k-1}(r,t) = Q(\lambda(t) r) 
   +\frac{1}{(t\lambda)^2}\frac{R^4}{4(1+R^2)^2}
   +O\left(\frac{R^2}{(t \lambda )^2 |\log t|}\right)  
  \]
 so that the corresponding error has size
  \[
  e_{2k-1} = O\left(\frac{1}{t^2 (t \lambda)^{2k}}\right)
  \]
  Here the $O(\cdot)$ terms are uniform in $0\le r\le t$ and $0<t<t_0$
  where $t_0$ is a fixed small constant; they are also stable with
  respect to the application of powers of the scaling operator $S$.
  We also have $u_{2k-1}(.,t)\in C^{\infty}([0,t))$, and further
  $u_{2k-1}\in H^{1}$. The only singularity arises on the light cone
  $r=t$.
\end{theorem}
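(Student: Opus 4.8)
The plan is to construct $u_{2k-1}$ by an iterative (perturbative) scheme in powers of the small quantity $(t\lambda)^{-2} = t^{-2}\lambda^{-2} = |\log t|^{-2\beta} t^{-4}\cdot t^2 \cdots$ — more precisely, organized as a double expansion in $(t\lambda)^{-2}$ and $|\log t|^{-1}$ — starting from the stationary profile $u_0 = Q(R)$. Substituting an ansatz $u = Q(R) + \sum_{j\ge 1} v_j$ into \eqref{eq:wave} and passing to the self-similar-type variables $(\tau, R)$ with $\tau = \int_t^{t_0}\lambda(s)\,ds$, $R = \lambda(t) r$, the wave operator $\Box - 2r^{-2}(1-u^2)$ linearized at $Q(R)$ produces (after conjugation by $R^{1/2}$, as in the excerpt) the operator $\calL = -\partial_R^2 + \frac{5}{4R^2} - \frac{24}{(1+R^2)^2}$ plus lower-order-in-$\lambda_\tau/\lambda$ corrections. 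The first correction $v_{10} = (t\lambda)^{-2}\frac{R^4}{4(1+R^2)^2}$ is exactly the solution of the leading inhomogeneous elliptic equation $\calL(R^{1/2}v) = $ (leading error from $u_0$); one then solves successively for the higher corrections, each obtained by inverting $\calL$ (away from its kernel) against the accumulated error. First I would set up this hierarchy cleanly, identify at each stage the source term and its $(R, t)$-homogeneity, and solve the resulting second-order ODE in $R$ by variation of parameters using the two explicit fundamental solutions of $\calL\phi = 0$.

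The key structural point — and the reason $\beta > 3/2$ (resp. $\beta \ge 1$ in this section's weaker statement) and the logarithmic rate appear — is the \emph{orthogonality condition}. Because $L = -\partial_r^2 - r^{-1}\partial_r - 2r^{-2}(1-3Q^2)$ has a genuine zero-energy \emph{eigenvalue} (not merely a resonance), namely the $L^2$ eigenfunction $\Phi(R) = R^2/(1+R^2)^2$ coming from the scaling symmetry, solvability of the inhomogeneous equation $L w = F$ in the right function class forces $\langle F, \Phi\rangle = 0$. At the stage where this obstruction first bites, the free parameter is the choice of $\lambda(t)$: imposing the orthogonality condition yields an ODE for $\lambda$ whose solution is precisely $\lambda(t) = t^{-1}|\log t|^\beta$, with $\beta$ the surviving degree of freedom. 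So the steps are: (i) run the expansion until the $\Phi$-orthogonality obstruction appears; (ii) extract and solve the ODE for $\lambda$, pinning down the logarithmic rate; (iii) with $\lambda$ fixed, continue the expansion, at each step subtracting off the $\Phi$-component before inverting $\calL$ (this is legitimate because the subtracted piece can be absorbed into a further modulation of lower-order terms, or is harmless at that order); (iv) after $2k-1$ steps, the accumulated remainder is $O\big(t^{-2}(t\lambda)^{-2k}\big)$, giving $u_{2k-1}$ and $e_{2k-1}$ with the claimed bounds.

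Three further points need care. First, \emph{regularity and the cone}: the fundamental solutions of $\calL$ and the source terms are analytic in $R$ near $R=0$ and near $R = \lambda(t) t = |\log t|^\beta$, but the corrections $v_j$ are built from functions of the self-similar variable $a = r/t$ and develop a (mild, explicitly describable) singularity as $a \to 1$, i.e. on the light cone $r = t$; I would track the structure of this singularity (e.g. powers of $(1-a^2)$ and logarithms) at each stage and verify it is compatible with $u_{2k-1} \in H^1$ and $C^\infty([0,t))$. Second, \emph{the $H^1$ threshold}: because the ODEs here are more singular than in the wave-maps case, the corrections decay only like $R^2(t\lambda)^{-2}|\log t|^{-1}$ rather than faster, so one must check that the tail in $R$ is still square-integrable against $R\,dR$ up to $R \sim |\log t|^\beta$ — this is where the sharpness $\beta > 3/2$ enters in the end. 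Third, \emph{stability under $S$}: since $S a = 0$ and $S$ essentially counts the $(t\lambda)^{-2}$-order, applying $S$ (or $S^2$) to each $v_j$ and to $e_{2k-1}$ produces terms of the same type with the same bounds; I would phrase the whole induction with the norms already built to be $S$-stable so this is automatic. The main obstacle I anticipate is step (ii)–(iii): correctly identifying the orthogonality obstruction order by order, showing the subtraction of the $\Phi$-component can always be accommodated (rather than derailing the scheme), and verifying that the resulting modulation equation for $\lambda$ has exactly the one-parameter family $t^{-1}|\log t|^\beta$ of admissible solutions with no hidden secular growth.
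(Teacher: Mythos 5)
Your high-level outline---iterative expansion from $Q(R)$, orthogonality to $\Phi$ determining the logarithmic rate, tracking a singularity at $a=1$, and building $S$-stability into the scheme---is in the right neighborhood, but several load-bearing pieces of the actual argument are either missing or misdescribed, and one of your proposed steps is not what the paper does and would likely derail the construction.

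First, on \emph{orthogonality and $\lambda$}: you propose to run the expansion until the $\Phi$-obstruction appears, derive a modulation ODE for $\lambda$, and thereafter subtract the $\Phi$-component of each source before inverting. That is not the paper's scheme, and the "absorb the subtracted piece into a further modulation of lower-order terms" step is precisely where your plan is unsupported. In the paper, $\lambda(t)=t^{-1}|\log t|^\beta$ is \emph{fixed at the outset}; one then \emph{verifies} that the leading source $R\Phi'+\Phi$ happens to satisfy $\langle R\Phi'+\Phi,\Phi\rangle_{\R^2}=0$ (this is what makes $v_{10}$ bounded). For all subsequent orders the paper explicitly notes that the orthogonality condition fails, and no subtraction is attempted; instead the variation-of-parameters solution picks up a $\log R$ (through the second fundamental solution $\Theta$), and this growth is carried along inside the function classes $\IS^m(R^k(\log R)^\ell,\cQ_n)$. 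If you insist on orthogonality at every step, you would need to modulate a genuine parameter at every order, and you have not said what that parameter is or shown that the accumulated modulations remain consistent with $\lambda=t^{-1}|\log t|^\beta$. So this step is a gap, not a simplification.

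Second, you are missing the \emph{two-step alternation} that organizes the whole construction: the paper alternates between (a) solving an elliptic equation in $R$ (inverting $L=\partial_R^2+R^{-1}\partial_R+2R^{-2}(1-3Q^2)$, equation \eqref{vkodd}, to kill the $R$-growth of the error) and (b) solving a wave-type equation in the self-similar variable $a=r/t$ (equation \eqref{vkeven}, reduced to the singular Sturm--Liouville operator $L_a$ and its $b$-dependent perturbation $L_{ab}$, to kill the $t$-decay deficit of the error). Your description treats the scheme as a single chain of $\calL$-inversions; that cannot reach the stated error bound $O(t^{-2}(t\lambda)^{-2k})$, because inverting in $R$ alone never improves the time decay near $a\asymp 1$.

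Third, and most importantly for the $H^1$ claim: the paper's device for the light-cone singularity is the variable $b_1=|\log t|+|\log p(a)|$ (with $p$ a polynomial, $p(1)=0$, $p'(1)=-1$, $p=1+O(a^M)$), and the specific expression
\[
\frac{(1-a)^{1/2}}{|\log t|+|\log p(a)|}
\]
is precisely what makes the corrections lie in $H^1$ across $a=1$ --- the bare $(1-a)^{1/2}$ coming from the fundamental system of $L_a$ does \emph{not} have $\partial_a$ in $L^2(0,1)$, so the $1/b_1$ factor is essential. You have not identified this mechanism; you say only that you would track "powers of $(1-a^2)$ and logarithms" and that one should check large-$R$ square-integrability, but the actual $H^1$ issue is at $a\to 1^-$, not in the tail in $R$ (which is cut off by the cone). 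Finally, $\beta>\tfrac32$ is not a requirement of this renormalization step at all: Section~3 only needs $\beta\ge 1$; $\beta>\tfrac32$ is imposed in the linear theory of Section~5. Conflating these obscures where the constraint actually comes from.

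In short: the leading-order orthogonality observation, the general perturbative shape, and the $S$-stability remark are correct, but you would need (i) to replace "impose orthogonality and modulate" with "fix $\lambda$, accept $\log R$-growth, and control it via function classes," (ii) the odd/even alternation between $R$-elliptic and $a$-transport solves, and (iii) the $b_1$-regularization of $(1-a)^{1/2}$ near the light cone, before this proof could go through.
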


\begin{proof}
  We iteratively construct a sequence $u_k$ of better approximate
  solutions by adding corrections $v_k$,
  \[
  u_k = v_{k} + u_{k-1}
  \]
  The error at step $k$ is
  \[
  e_k = (\partial_t^2 - \partial_r^2 - \frac{1}r \partial_r) u_k
  -\frac{2}{r^{2}}f(u_k),\qquad f(u)=u(1-u^2)
  \]
  To construct the increments $v_k$ we first make a heuristic
  analysis. If $u$ were an exact solution, then the difference
  \[
  \epsilon = u-u_{k-1}
  \]
  would solve the equation
  \begin{equation}\label{eq:gleich}\begin{split}
  &(-\partial_t^2 + \partial_r^2 +\frac{1}r \partial_r) \epsilon +
  \frac{2}{r^2}f'(u_{k-1})\eps \\
  & = e_{k-1} -\frac{2}{r^2}
  (f(u_{k-1}+\eps) - f'(u_{k-1})\eps - f(u_{k-1})) \\
  & = e_{k-1} +\frac{2}{r^2}(3\eps^2
  u_{k-1} + \eps^3)
  \end{split}
  \end{equation}
  In a first approximation we linearize this equation around $\epsilon
  =0$ and substitute $u_{k-1}$ by $Q(R)$. Then we obtain the linear
  approximate equation
  \begin{equation}
    \left(-\partial_t^2 + \partial_r^2 +\frac{1}r \partial_r +
      \frac{2}{r^{2}}(1-3Q^2)\right) \epsilon  \approx e_{k-1}
    \label{eps}  \end{equation} For $r \ll t$ we expect the time
  derivative to play a lesser role  so we neglect it and we are left
  with an elliptic equation with respect to the variable $r$,
  \begin{equation}
    \left(\partial_r^2 +\frac{1}r \partial_r +
      \frac{2}{r^{2}}(1-3Q^2)\right) \epsilon  \approx e_{k-1}, \qquad r
    \ll t \label{epsodd}\end{equation} For $r \approx t$ we  rewrite \eqref{eps} in the form
  \[
  \left(-\partial_t^2 + \partial_r^2 +\frac{1}r \partial_r -
    \frac{4}{r^{2}}\right) \epsilon \approx e_{k-1}
  \]
  Here the time and spatial derivatives have the same strength.
  However, we can identify another principal variable, namely $a =
  r/t$ and think of $\epsilon$ as a function of $(t,a)$.  As it turns
  out, neglecting a "higher order" part of $e_{k-1}$ which can be directly included
  in $e_k$, we are able to use scaling and the exact structure of the
  principal part of $e_{k-1}$ to reduce the above equation to a
  Sturm-Liouville problem in~$a$
  which becomes singular at~$a=1$.

  The above heuristics lead us to a two step iterative construction of
  the $v_k$'s. The two steps successively improve the error in the two
  regions $r \ll t$, respectively $r \approx t$. To be precise, we
  define $v_k$ by
  \begin{equation}
    \left(\partial_r^2 +\frac{1}r \partial_r + \frac{2}{r^{2}}(1-3Q^2)\right) v_{2k+1}  = e_{2k}^0
    \label{vkodd}\end{equation} respectively
  \begin{equation}
    \left(-\partial_t^2+\partial_r^2 +\frac{1}r \partial_r - \frac4{r^{2}}\right) v_{2k}  = e_{2k-1}^0
    \label{vkeven}\end{equation} both equations having zero Cauchy
  data\footnote{The coefficients are singular at $r=0$, therefore this
    has to be given a suitable interpretation} at $r=0$.  Here at each
  stage the error term $e_k$ is split into a principal part and a
  higher order term (to be made precise below),
  \[
  e_k = e_k^0 + e_k^1
  \]
  The successive errors are then computed as
  \[
  e_{2k} = e_{2k-1}^1 + N_{2k} (v_{2k}), \qquad e_{2k+1} = e_{2k}^1
  +
  \partial_t^2 v_{2k+1} + N_{2k+1} (v_{2k+1})
  \]
  where
  \begin{equation}
    N_{2k+1}(v) =  \frac{6}{r^2}(u_{2k}^2-Q^2) v +  \frac{2}{r^{2}}
  (3v^2 u_{2k} + v^3)
    \label{eodd}\end{equation} respectively
  \begin{equation}
    N_{2k}(v) = \frac{6}{r^2}(u_{2k-1}^2-1) v +  \frac{2}{r^{2}}
  (3v^2 u_{2k-1} + v^3)
    \label{eeven}\end{equation}

  To formalize this scheme we need to introduce suitable function
  spaces in the cone \[\calC_0=\{(r,t)\::\:0\le r<t, 0<t<t_0\}\] for
  the successive corrections and errors. We first consider the $a$
  dependence. For the corrections $v_k$ we use the following general concept

\begin{defi}\label{def:Q}
Let $k \geq 0$. Then  $\mathcal Q_k$ is the algebra of continuous
functions $q(a,\alpha,\alpha_1)$
\[
q:(0,1] \times \R \times \R \to  \R
\]
with the following properties:

(i) $q$ is smooth in $ a \in (0,1)$, and meromorphic and even around
$a=0$. Further, the restriction to the diagonal
\[ \tilde{q}(a,b):=q(a,b,b)\]
extends analytically to $a=0$ and has an even expansion there.

(ii) $q$ has the form
\[
q(a,\alpha,\alpha_1) = \sum_{i+j \leq - k}^{j \leq 0, i \leq |j|/2 }
q_{ij}(a,\ln \alpha, \ln \alpha_1) \alpha^i \alpha_1^j
\]
with $q_{ij}$ polynomial in $\log\alpha, \log\alpha_{1}$. The sum
only has finitely many terms.

 (iii) Near $a=1$ we have a
representation of the
  form
  \[
  q = q_0(a,\alpha) + (1-a^2)^{\frac12} q_1(a,\alpha,\alpha_1)
  \]
  with coefficients $q_0$, $q_1$ analytic in $a$ around~$a=1$.
 \end{defi}

The order of the pole at $a=0$ as it appears in
Definition~\ref{def:Q}, part~(i), is controlled by some absolute
constant depending only on~$k$. The same comment applies to every
pole at $a=0$ appearing in this section and will be assumed tacitly
throughout.   For the errors $e_k$ we introduce another functions
class:

\begin{defi}
   $\mathcal Q_{k}^{'}$ is the space of continuous functions $q(a,\alpha,\alpha_1)$
\[
q:(0,1) \times \R \times \R \to  \R
\]
with the following properties:

(i) $q$ is smooth in $ a \in (0,1)$,   meromorphic and even around
$a=0$. The restriction to the diagonal
\[
\tilde{q}(a,b,b)
\]
extends analytically to $a=0$

(ii) $q$ has a representation as in (ii) of the preceding definition

 (iii) Near $a=1$ we have a representation of the
  form
\[
  q = q_0(a,\alpha) + (1-a^2)^{\frac12}
  q_1(a,\alpha,\alpha_1)+
(1-a^2)^{-\frac12} q_2(a,\alpha,\alpha_1)
\]
  with coefficients $q_0$, $q_1$, $q_2$ analytic with respect to $a$ around $a=1$.
  Moreover, $q_2$ has the same representation as $q$ in (ii), but with $k$ replaced by
$k+1$  and $j\leq -1$.
 \end{defi}

Next we define the class of functions of $R$:

\begin{defi}
  $S^m(R^k (\ln R)^\ell)$ is the class of analytic functions
  $v:[0,\infty) \to \R$ with the following properties:

  (i) $v$ vanishes of order $m$ at $R=0$, and $R^{-m}v$ has an even expansion around $R=0$.

  (ii) $v$ has a convergent expansion near $R=\infty$,
  \[
  v(R) = \sum_{0 \leq j \leq \ell+i} c_{ij}\, R^{k-2i} (\ln R)^{j}
  \]
\end{defi}
Finally, we introduce the auxiliary variables
\[
b := |\ln t|, \qquad b_1:= |\ln t| + |\ln p(a)|
\]
where $p$ is a real even polynomial with the following properties:
\[
p(1)= 0,\quad p'(1) = -1, \quad p(a)=1+O(a^M) \text{\ \ as\ \ }a\to0
\]
where $M$ is a very large number (depending on the number $k$ of
steps in our iteration), and $p$ has no zeroes in $(0,1)$. We can
now define the main function class for our construction.

\begin{defi}
  a) $S^m(R^k (\ln R)^\ell,\mathcal Q_n)$ is the class of analytic
  functions
\[
v:[0,\infty) \times [0,1]\times \R^{2} \to \R
\]

  (i) $v$ is analytic as a function of $R$,
  \[
  v: [0,\infty)  \to \mathcal Q_{n}
  \]

  (ii) $v$ vanishes of order $m$ at $R=0$

  (iii) $v$ has a convergent expansion at $R=\infty$,
  \[
  v(R,\cdot,b,b_1) = \sum_{0 \leq j \leq \ell+i} c_{ij}(\cdot,b,b_1) R^{k-2i}
  (\ln R)^{j}
  \]
  with coefficients $c_{ij} \in \cQ_{n}$.

  b) $\IS^m(R^k (\ln R)^\ell,\mathcal Q_n)$ is the class of analytic
  functions $w$ on the cone $\calC_0$ which can be represented as
  \[
  w(r,t) = v(R,a,b,b_1), \qquad v \in S^m(R^k (\ln R)^\ell,\mathcal Q_n)
  \]
\end{defi}

We note that the representation of functions on the cone as in part
(b) is in general not unique since $R,a,b$ are dependent variables.
Later we shall exploit this fact and switch from one representation
to another as needed. We start our construction with some explicit 
computations which allow us to establish the regularity of the first
few terms in the iteration, namely
\begin{align}
\label{v1}
v_1 & \in 
(t\lambda)^{-2}\frac{R^4}{4(1+R^2)^2} + (\lambda t)^{-2} 
\left(  \frac{1}{|\log t|} IS^4(R^2)+\frac{1}{|\log t|^2} IS^4(R^2)\right)
\\ \label{e1}
t^2 e_1 & \in  (\lambda t)^{-2} \left( IS^{4}(1)+ \frac{1}{|\log t|}
IS^4(R^2)+\frac{1}{|\log t|^2} IS^4(R^2)\right)
\\ \label{v2}
v_2 & \in a^4 IS(1,\cQ_{1})
\end{align}
After these few steps we reach the general pattern, and prove by
induction that the successive corrections $v_k$ and the corresponding
error terms $e_k$ can be chosen with the following properties:
\begin{align}
  v_{2k-1} &\in \IS^4(R^2 (\log R)^{k-1}, \cQ_{2\beta k})
  \label{v2k-1}\\
  t^2 e_{2k-1} &  \in  \IS^2(R^2 (\log R)^{k-1}, \cQ'_{2\beta k})
 \label{e2k-1}\\
  v_{2k} &\in a^4 \IS((\log R)^{k-1}, \cQ_{2\beta(k-1)})
  \label{v2k}\\
  t^2 e_{2k} &\in a^{2}\IS((\log R)^{k-1}, \cQ_{2\beta k}')+\IS^4((\log R)^{k-1}, \cQ_{2\beta k})
\label{e2k}
  \end{align}
The properties \eqref{v1}-\eqref{e2k} suffice in order to reach the 
conclusion of the theorem. We note that  is easy to verify that all the
above classes of functions are left invariant by the scaling operator $S$.

The proof of \eqref{v1}-\eqref{e2k}  roughly follows that in [WM], [SL].
There is, however, an important difference near the light cone: for
the critical Wave Maps problem as well as the critical focussing
semilinear equation, the singularity at the boundary of the light
cone is well modeled by the expression $(1-a)^{\frac{1}{2}+\nu}$,
which comes from the choice of blow up rate $t^{-1-\nu}$. For
Yang-Mills, due to the much faster blow up speed, we need to
essentially use the much more singular expression
\begin{equation}\label{eq:crux}
  \frac{(1-a)^{\frac{1}{2}}}{|\log t|+|\log p(a)|} 
\end{equation}
where $p(a)$ is a polynomial so that $p(1)=0$.  This renders the
algebra significantly more delicate. We remark that~\eqref{eq:crux}
appears canonically in this section. On one hand, $(1-a)^{\frac12}$ is
part of a fundamental system of that ODE which~\eqref{vkeven} reduces
to in the self-similar variable $a=\frac{r}{t}$. This is exactly what
one would obtain by neglecting all but the selfsimilar components of
the wave operator. However, unlike in [WM], [SL], here we encounter a
nontrivial non-selfsimilar effect which forces the exact denominator
in \eqref{eq:crux}.  In particular this saves the day by insuring
that~\eqref{eq:crux} belongs to $H^1(0,1)$ which of course is a
minimal requirement here.

To commence the construction of the $v_{k}$, we recall that
\[
Q(R) = \frac{1-R^2}{1+R^2}, \qquad \Phi(R) = \frac{R^2}{(1+R^2)^2}
\]
where $\Phi$ is the zero eigenfunction, $L \Phi = 0$
with
\[
L = \partial_R^2 + \frac{1}R \partial_R + \frac{2}{R^2}(1-3Q^2)
\]
By \eqref{eq:u0def} we have
\begin{equation}\nn
\begin{split}
t^2 e_0  =&\  -t^2 \partial_t^2 Q(R)\\ =&\ \left(1+\frac{\beta}{|\log
  t|}\right)^{2}R\Phi'(R)+
\left(1+\frac{\beta}{|\log t|}-\frac{\beta}{|\log t|^{2}}\right)\Phi(R)
\end{split}
\end{equation}

\medskip

{\bf Step 1:} {\em Begin with $e_{0}$ as above and choose $v_{1}$ so that \eqref{v2k-1} for $k=1$ holds. Further the error $e_{1}$ thereby generated is of the form \eqref{e2k-1} for $k=1$.}
   \\
 Here, we simply put $e_{0}^{0}:=e_{0}$. Reformulate the equation for $v_{1}$ as follows:
  \[
  (t\lambda)^{2}\tilde{L} \sqrt{R}v_{1}=\sqrt{R}t^{2}e_{0},\quad
\tilde{L}=\partial_{R}^{2}-\frac{15}{4R^{2}}+\frac{24}{(1+R^{2})^{2}}
  \]
  Using the above calculation of $e_{0}$, it is then straightforward
  to write down an absolutely convergent Taylor expansion of $v_{1}$ around
  $R=0$. Since $t^2 e_0$ vanishes of second order at $0$, it follows
  that $v_1$ vanishes of order four at $0$.

  Now we turn to the expansion around $R=\infty$. The leading term in
  $t^2 e_0$ is $R\Phi'(R)+\Phi(R)$. A key fact is that this satisfies
  the orthogonality condition
\[
\la R\Phi'(R)+\Phi(R),\Phi \ra_{\R^2} = 0
\]
 It is partly this orthogonality condition which motivates our choice of $\lambda(t)$.
As a consequence, the solution to $Lv_{10} = R\Phi'(R)+\Phi(R)$ does not grow at $\infty$, precisely it equals
\[
v_{10} = \frac14 (t\lambda)^{-2}\frac{R^4}{(1+R^2)^2}
\]
For the remaining terms we do not have such a precise representation
since we lack the orthogonality condition.  We use this fundamental
system of solutions for $\tilde L$:
  \[
  \phi_0(R) = \frac{R^{\frac52}}{(1+R^2)^2}, \qquad \theta_0(R) =
  \frac{-1-8R^2 + 24R^4\log R + 8R^6+ R^8}{4R^{\frac32}\,(1+R^2)^2}
  \]
  Their Wronskian is $W(\phi_0,\theta_0)=1$. Then
  $\Phi(R)=R^{-\frac12} \phi_0(R)$ and define
  $\Theta(R):=R^{-\frac12}\theta_0(R)$ so that $L\Phi=0$, $L\Theta=0$,
  respectively\footnote{Note the appearance of $\Phi(R)\log R$ as part
    of $\Theta$.}.  One thus obtains an integral representation for
  $v_{1}$ using the variation of parameters formula, which gives
  \begin{align}
    (t\lambda)^{2}v_{1}(R) &=  \frac{\theta_0(R)}{\sqrt{R}}\! \int_0^R
    \!\! \phi_0(R')\sqrt{R'} t^{2}e_{0}(R')\,
  \,dR' -  \frac{\phi_0(R)}{\sqrt{R}} \! \int_1^R\!\!  \theta_0(R') \sqrt{R'}
  t^{2}e_{0}(R')\, \,dR'\nn\\&
  =\Theta(R) \int_0^R \Phi(R')t^{2}e_{0}(R') R'\, dR' - \Phi(R)\int_1^R
  \Theta(R') t^{2}e_{0}(R') R'\, dR'  \nn
   \end{align}
In the end we obtain the representation
\begin{equation} \label{dv1}
v_1=v_{10}+v_{11}, \qquad v_{11} \in (\lambda t)^{-2} \left(  \frac{1}{|\log t|} IS^4(R^2)+\frac{1}{|\log t|^2} IS^4(R^2)\right)
\end{equation}
which implies \eqref{v1}.

Next, we determine the error, which is given by
\[
t^2 e_{1}=t^2 \partial_{t}^{2}v_{1}- \frac{3 (\lambda t)^2}{R^2}(3 v_1^2 Q + v_1^3)
\]
After some computations we obtain the relation \eqref{e1}, namely
\begin{equation} \label{de1}\begin{split}
t^2 e_{1} &\in  (\lambda t)^{-2} \left( IS^{4}(1)+ \frac{1}{|\log t|}
IS^4(R^2)+\frac{1}{|\log t|^2} IS^4(R^2)\right) 
\end{split}
\end{equation}

\medskip

{\bf Step 2:} Recall that $v_2$ is determined by \eqref{vkeven},
which requires specifying $e_1^0$. This will be done iteratively,
which means that
\begin{equation}\label{eq:e10}
e_1^0 = \sum_{j=0}^J e_1^{0j}
\end{equation}
where $J=J(\beta)$ grows with $\beta$ and $e_1^{0j}$ is specified
recursively. To being with, we extract the leading order (in terms
of growth in~$R$) from $e_1$ and set
\[
t^2 e_{1}^{00} :=  c_1(\lambda t)^{-2}  \frac{1}{|\log t|} R^2 + c_2
(\lambda t)^{-2}  \frac{1}{|\log t|^2} R^2 = c_1 \frac{a^2}{b}+
c_2\frac{a^2}{b^2}
\]
with suitable constants $c_1, c_2$.  Note that then
\[ 
e_{1}^{10}:=e_{1}-e_{1}^{00}\in IS^{2}(1, \cQ_{2\beta})
\] 
which is
admissible for $e_2$, see~\eqref{e2k}. Replacing $Q$ by $1$ we now
seek to  solve the linear differential equation
 \begin{equation}\label{eq:v2e100}
  t^2\left(-\partial_t^2 + \partial_r^2 +\frac{1}r \partial_r -
    \frac{4}{r^{2}}\right) v_2 = t^{2}\,e_{1}^{00}
 \end{equation}
In the $a,b$ coordinates the above equation is
rewritten as
\[
L_{ab}\,  v_2(a,b) =   c_1\frac{a^2}{b}  +    c_2  \frac{a^2}{b^2}
\]
where
\[
 L_{ab}\,  = -(\partial_b + a \partial_a)^2 - (\partial_b + a \partial_a) + \partial_a^2 +\frac{1}a \partial_a -
    \frac{4}{a^{2}}
 \]
Set also the $b$ independent part
\[
L_a =  (1-a^2)\partial_a^2 +\left(\frac{1}a-2a\right) \partial_a -
    \frac{4}{a^{2}}
\]
For technical reasons, we will only obtain an approximate
solution~$v_2$ of~\eqref{eq:v2e100}. We then face a dichotomy:
either the error $L_{ab}\, v_2-e_1^{00}$ is acceptable for~$e_2$ or
not; in the latter case, we repeat the procedure by including the
unacceptable error in~$e_1^0$ and solving for a correction to~$v_2$.
This process (which also needs to take the nonlinear component
of~$e_2$ into account, see~\eqref{eeven}) then leads to the
aforementioned iterative construction of~$e_1^0$ and~$v_2$.

We begin by constructing an approximate solution to $L_{ab}\,
w_2=\frac{a^2}{b}$.
 The approximate
solution in the following lemma is called $w_2$ rather than $v_2$
since the latter will be the sum of various expressions, the first
being~$w_2$.

\begin{lemma}\label{lem:3.6}
Let $e(a)$ be even analytic and quadratic at $a=0$.
There is an approximate solution $w_2$ for
\[
L_{ab}\,  w_2 = b^{-1} e(a)
\]
which is of the form
\begin{equation}\label{eq:w2def}
w_2(a,b) = b^{-1} W_2^0 (a) +  b_1^{-1} (1-a)^{\frac12} W_2^1 (a)
\end{equation}
where $W_2^0$, $W_2^1$ are even analytic in $a \in (0,1]$ with an
$a^{-2}$ leading term at $0$ so that $w_2$ vanishes to fourth order
at $a=0$. The error has the form
\begin{align}
f_2^0 &:= L_{ab}\,  w_2 - b^{-1} e(a) \nonumber \\
&=E_2^0 (a,b^{-1}) +  (1-a)^{\frac12} E_2^1 (a, b_1^{-1})
\label{eq:Lab_err}
\end{align}
where $E_2^0$, $E_2^1$ are analytic in $a \in (0,1]$, linear in
 $b^{-2}$, $b_1^{-2}$,  $b^{-3}$, $b_1^{-3}$, and vanish
quadratically at $a=0$.
\end{lemma}

\begin{proof}
We begin with the ansatz
\[
w_2 = \frac{W_2^0 (a)}b + \frac{ (1-a)^{\frac12} W_2^1 (a)}{b_1}
\]
where \[L_a W_2^0 (a)=e(a), \quad L_a((1-a)^{\frac12} W_2^1(a)) =0\]
The solvability of these equations will be discussed later in the
proof.  Then
\[
L_{ab}\,  w_2 =  \frac{L_a W_2^0 (a)}b + \frac{ L_a((1-a)^{\frac12}
W_2^1
  (a))}{b_1} + f_2^0 = \frac{e(a)}{b} + f_2^0
\]
where
\[
\begin{split}
 f_2^0 = & ( -\partial_b^2 -2\partial_b a \partial_a -\partial_b)\frac{W_2^0 (a)}b
- (1-a)^{\frac12}W_2^1 (a)( \partial_b^2  +\partial_b) [\frac{1}{b_1}]
\\ &\ + (1-a)^{\frac12}W_2^1 (a) ((a^{-1}-2a+1)\partial_a -(1-a)^2
\partial_a^2) [\frac{1}{b_1} ]\\ &\ + 2 (1-a^2)
  \partial_a  ((1-a)^{\frac12}W_2^1 (a)) \partial_a [\frac{1}b_1]
-2a(1-a)^{\frac12} \partial_{a}W_2^1(a) \partial_b [\frac1{b_1}]
\\ &\ -2 aW_2^1(a) \partial_a \partial_b [ \frac{(1-a)^{\frac12}}{b_1}
] + (1-a)^{\frac12} W_2^1 (a) (-\partial_a +
((1-a)^2+(1-a^2))\partial_a^2)[ \frac{1}{b_1} ]
\end{split}
\]
The final term here is the same as
\begin{align*}
 & (1-a)^{\frac12} W_2^1 (a) (-\partial_a +
2(1-a)\partial_a^2)[ \frac{1}{b_1} ]\\
&=  W_2^1 (a) (-\partial_a + 2(1-a)\partial_a^2)[
\frac{(1-a)^{\frac12}}{b_1} ] +2(1-a)^{\frac12}W_2^1(a)\partial_a
[\frac{1}b_1]
\end{align*}
which implies that the error equals
\begin{align}
 f_2^0 &= ( -\partial_b^2 -2\partial_b a \partial_a -\partial_b)\frac{W_2^0 (a)}b
- (1-a)^{\frac12}W_2^1 (a)( \partial_b^2  +\partial_b)
[\frac{1}{b_1}] \nn
\\ &\ + (1-a)^{\frac12}W_2^1 (a) ((a^{-1}-2a+1)\partial_a -(1-a)^2
\partial_a^2) [\frac{1}{b_1} ]\nn \\ &\ + (1-a)^{\frac32}
(W_2^1(a)+2(1+a)
  \partial_a  W_2^1 (a)) \partial_a [\frac{1}b_1] \label{eq:f2block} \\
& -2a(1-a)^{\frac12} \partial_{a}W_2^1(a) \partial_b [\frac1{b_1}]
\nn
\\ &\ +
W_2^1 (a) (-2a \partial_b \partial_a + 2(1-a)\partial_a^2
-\partial_a)[ \frac{(1-a)^{\frac12}}{b_1} ] \label{eq:final_term}
\end{align}
In the first term we gain at least one power of $b^{-1}$. In the
second and fifth terms we gain at least one power of $b_1^{-1}$.
Since
\[
(1-a) \partial_a b_1 = - \frac{(1-a) p'(a)}{p(a)}
\]
which is analytic in $[0,1]$ it follows that in the third and fourth
terms we gain at least one power of $b_1^{-1}$ without losing any
power of~$(1-a)$.

So far we have considered the negligible terms. The key expression
is the one in the final term, which determines the choice of our
ansatz. Here there is a nontrivial cancellation which yields an
additional $1-a$ factor. To begin with, recall that
\[
(2(1-a)\partial_a^2 -\partial_a)(1-a)^{\frac12}=0
\]
This implies that in \eqref{eq:final_term} at least one derivative
has to fall on~$b_1$ leading to a gain of at least one power
of~$b_1$. However, we need to check that there is no loss in terms
of powers of~$(1-a)$. This can be seen via the factorization (we
first consider $\partial_a\partial_b$ since the difference from
$a\partial_a\partial_b$ gains a factor of $1-a$)
\begin{equation}
  \label{eq:main_fact}
  \begin{split}
  &(-2 \partial_b \partial_a + 2(1-a)\partial_a^2
-\partial_a) (1-a)^{\frac12} g(a,b) \\
&=
\big(2(1-a)^{\frac12}\partial_a-(1-a)^{-\frac12}\big)\big(-\partial_b+(1-a)\partial_a\big)g(a,b)
\end{split}
\end{equation}
provided $g(a,b)$ is smooth. In particular, setting
$g(a,b)=\frac{1}{b_1}$,
\begin{align}
 &(-2 \partial_b \partial_a + 2(1-a)\partial_a^2
-\partial_a) \frac{(1-a)^{\frac12}}{b_1} \nn\\
&=
\big(2(1-a)^{\frac12}\partial_a-(1-a)^{-\frac12}\big)\big(-\partial_b+(1-a)\partial_a\big)[\frac{1}{b_1}]\nn \\
&=
\big(2(1-a)^{\frac12}\partial_a-(1-a)^{-\frac12}\big)b_1^{-2}\big(\partial_b-(1-a)\partial_a\big)b_1\nn
\\
&= \Big(2\frac{(1-a)^{\frac12}}{b_1^{2}}\partial_a -
4\frac{(1-a)^{\frac12}}{b_1^{3}}\partial_a b_1
-\frac{(1-a)^{-\frac12}}{b_1^{2}}\Big)\big(\partial_b-(1-a)\partial_a\big)b_1\label{eq:4b1}
\end{align}
Given our choice of $b_1$,
\[
(-\partial_b +(1-a) \partial_a ) b_1 = -1- \frac{p'(a)}{p(a)}(1-a)=O(1-a)
\]
Thus, the $(1-a)$-gain in the second factor in~\eqref{eq:4b1}
cancels the $(1-a)$-loss that we incur in the first factor. At the
same time we get at least a~$b_1^{-2}$ factor. In conclusion,
\begin{equation}\label{b1cancel}
 (-2 a\partial_b \partial_a + 2(1-a)\partial_a^2
-\partial_a) \frac{(1-a)^{\frac12}}{b_1} = O(\frac{(1-a)^\frac12}{b_1^2})
\end{equation}
where the $O(\cdot)$-term here depends linearly on $b_1^{-2}$
and~$b_1^{-3}$.
 This establishes the desired estimate on the error~$f_2^0$.

We now consider the principal part, for which we need to solve
\begin{equation}\label{eq:Lainh}
L_a W_2^0 = e(a), \qquad L_a ((1-a)^\frac12 W_2^1(a)) = 0
\end{equation}
In order to analyze these equations, we first discuss fundamental
systems of $L_a$ and their respective behaviors at the regular
singular points $a=0$ and $a=1$ of $L_a$ (we can ignore the regular
singular point $a=-1$ of $L_a$). From
\[
L_a(a^k) = (k^2-4)a^{k-2} - k(k+1) a^k
\]
we conclude that $L_a [a^{\pm 2}(1+a^2\phi_{\pm}(a))]=0$ where
$\phi_{\pm}$ are even analytic functions around $a=0$. Moreover, a
particular solution to $L_a(f)=a^2$ is given by
$f(a)=-\frac{a^2}{6}$. Similarly, for any $e(a)$ as in the statement
of the lemma there is a particular solution $f(a)$ to $L_a f= e$
with $f$ even analytic around $a=0$ and vanishing quadratically at
$a=0$. Note that $f$ is not unique. However, adding a suitable
multiple of the $a^2$-homogeneous solution we can achieve that
$f(a)$ vanishes to fourth order at $a=0$ (i.e.~$f(a)=O(a^4)$) and is
unique.

To analyze a fundamental system around $a=1$ we write
\begin{align*}
L_a &= 2(1-a)^{\frac12} \partial_a ((1-a)^{\frac12} \partial_a) -
(1-a)^2 \partial_a^2 + a^{-1}(1+2a)(1-a)\partial_a - \frac{4}{a^2}\\
&=: L_{a,0} + L_{a,1}
\end{align*}
where $L_{a,0}:=2(1-a)^{\frac12} \partial_a ((1-a)^{\frac12}
\partial_a)$. Now
\begin{align*}
 L_{a,0} (1-a)^k &= k(2k-1) (1-a)^{k-1}\\ L_a (1-a)^k &=
k(2k-1) (1-a)^{k-1} + O((1-a)^k)
\end{align*}
with an analytic $O(\cdot)$-term. This implies that $L_a\psi_0=L_a
\psi_1=0$ with \begin{equation}\label{eq:psis}
\psi_0(a)=1+(1-a)\tilde\psi_0(a), \quad \psi_1(a)=
(1-a)^{\frac12}(1+(1-a)\tilde\psi_1(a)) \end{equation} where
$\tilde\psi_0,\tilde\psi_1$ are analytic around $a=1$. In
particular, we can solve for $W_2^1$ in~\eqref{eq:Lainh} and $W_2^1$
is unique up to a constant factor. For future reference we remark
that
\begin{align*}
L_a &= \rho_1 \partial_a (\rho_2\partial_a) -\frac{4}{a^2} \\
\rho_1(a) &= \frac{1}{a}\sqrt{1-a^2},\quad \rho_2(a) = a\sqrt{1-a^2}
\end{align*}
To solve \eqref{eq:Lainh}, we first solve for $W :=
W_2^0+(1-a)^\frac12 W_2^1$ and then extract $W_2^0$ and $W_2^1$ from
it. The logic here is as follows:
 At $a=0$ we want $w_2$ to vanish to
fourth order. This implies that $W$ must also vanish to the same
order since \[b_1-b=|\log p(a)|= -\log(1-O(a^M))=O(a^M)\] with $M$
large. Therefore, as discussed above, $W$ is uniquely determined  as
a solution to
\[
L_a W(a) = a^2, \quad -\eps<a<1
\]
where $\eps>0$ is some small constant. By variation of parameters
there exist unique constants $c_0, c_1, c_2$  with the property that
\[
W(a) = c_0\psi_0(a) + c_1 \psi_1(a) + c_2 \int_a^1
[\psi_0(a)\psi_1(u)-\psi_1(a)\psi_0(u)](\rho_1(u))^{-1}u^2\,du
\]
By inspection, the integral on the right-hand side is smooth around
$a=1$. This shows that we need to set
\begin{align*}
W_2^1(a) &:= c_1 \psi_1(a) \\ W_2^0(a) &:=c_0\psi_0(a) + c_2
\int_a^1
[\psi_0(a)\psi_1(u)-\psi_1(a)\psi_0(u)](\rho_1(u))^{-1}u^2\,du
\end{align*}
Observe that at $a=0$ we have no guarantee that $W_2^0, W_2^1$ are
smooth; in fact, they may exhibit $a^{-2}$-type behavior.
\end{proof}

We remind the reader that $b_1$ in \eqref{eq:w2def} cannot be
replaced with~$b$ since we require that $w_2\in H^1(0,1)$ relative
to the~$a$ variable. The proof also shows that one cannot dispense
with the $(1-a)^{\frac12}$ part of~$w_2$ since it is part of the
fundamental system of~$L_a$.
 Another important feature of the previous proof is
the cancellation in~\eqref{eq:main_fact}. For our purposes,
$g(a,b)=h(b_1)$ whence~\eqref{eq:main_fact} becomes
\begin{equation}
  \label{eq:main_fact2}
  \begin{split}
  &(-2 \partial_b \partial_a + 2(1-a)\partial_a^2
-\partial_a) (1-a)^{\frac12} h(b_1) \\
&=
\big(2(1-a)^{\frac12}\partial_a-(1-a)^{-\frac12}\big)h'(b_1)\big(-\partial_b+(1-a)\partial_a\big)
b_1\\
&= \big(2(1-a)^{\frac12}h''(b_1)\partial_a b_1 +
2(1-a)^{\frac12}h'(b_1)\partial_a
-h'(b_1)(1-a)^{-\frac12}\big)O(1-a)\\
&= O((1-a)^{\frac12}h''(b_1))+ O((1-a)^{\frac12}h'(b_1))
\end{split}
\end{equation}
In view of \eqref{eq:main_fact2}, the proof of  Lemma~\ref{lem:3.6}
generalizes  to right-hand sides such as~$\frac{e(a)}{b^k}$ for any
$k\ge1$.

If we were to now set $v_2:=w_2$ (from the previous lemma), then the
error $f_2^0$ from~\eqref{eq:Lab_err}, as well as the remaining
$c_2\frac{a^2}{b^2}$ piece from~$e_1^{00}$, would have to be
included in $e_2$. However, if $\beta>1$ this is inadmissible since
the error $e_2$ needs to decay at least
like~$(t\lambda(t))^{-2}=b^{-2\beta}$. The importance of
$(t\lambda)^{-2}$ lies with scaling; indeed, the elliptic
equation~\eqref{vkodd} scales like~$R^2$ which equals $(t\lambda)^2$
at its largest.

These are not the only obstacles we face here: the nonlinear part
of~$e_2$ (again if $v_2=w_2$) is
\begin{equation}
\label{eq:e2nonlin}
\frac{6}{r^{2}}(u_{1}^{2}-1)w_{2}+\frac{2}{r^{2}}(3w_{2}^{2}u_{1}+w_{2}^{3}),
\end{equation}
where $u_{1}=Q+v_{1}$, see~\eqref{eeven}. One easily checks that the
preceding expression times $t^{2}$ lies in \[(\lambda
t)^{-2}IS^{4}(1, \cQ_{1})+(\lambda t)^{-2}IS^{4}(R^{2}, \cQ_{2}).\]
The term $(\lambda t)^{-2}IS^{4}(1, \cQ_{1})$ can be incorporated
into $t^{2}e_{2}$; however, the term
\[(\lambda t)^{-2}IS^{4}(R^{2},
\cQ_{2})\]
is not acceptable for $e_2$ due to the $R^2$ growth.

We deal with these obstacles by including all unacceptable
errors~$e$ (with regard to~$e_2$) in $e_1^0$ and solving $L_{ab}\,
w=e$. For example, using the notation of Lemma~\ref{lem:3.6} the
second term in~\eqref{eq:e2nonlin} contributes
\[
e= a^{-2} \frac{(1-a)^{\frac12}W_2^0(a) W_2^1(a)}{bb_1}
\]
where we replaced $u_1$ with $1$. The corresponding ansatz for~$w$
would then necessarily contain the term
\[
w= (bb_1)^{-1}(1-a)^{\frac12} W(a)
\]
If $\partial_a\partial_b$ (which is part of $L_{ab}$) hits this
term, then we obtain (amongst others) the error term
\[
(1-a)^{-\frac12} b^{-1}b_1^{-2}
\]
Iterating once more with this error on the right-hand side produces
the expression
\[
(1-a)^{\frac12}b_1^{-2}\log b
\]
In order to remove possible singularities at $a=0$ (as in the
previous proof) one needs as many powers of $\ln b_1$ as of $\ln b$.
These observations should serve to motivate the following result
which will finally allow us to carry out the full iteration
for~$v_2$ (as well as for $v_{2k}$ in Step~4 below). We begin with a
definition.

\begin{defi}
  \label{def:Fk}
Let $2k\ge m\ge k\ge1$.
  By $\calF_{k,m}$ we mean the function class
  \begin{align*}
    \calF_{k,m} &:= \Big\{f_k\;\Big|\; f_k=\ b^{-k} e_0( a,\ln b)     +  (1-a)^{\frac12} \sum_{j=1}^m
 e_{j}^0( a, \ln b,\ln b_1)  b^{j-k} b_1^{-j}
\\ &\ +  (1-a)^{-\frac12} \sum_{j=1}^m
  e_{j}^1( a, \ln b,\ln b_1)   b^{j-k-1} b_1^{-j} \Big\}
  \end{align*}
where for each $j$ the functions $e_0$, $e_{j}^0(a)$, $e_j^1(a)$ are
smooth in $a\in(0,1)$, analytic around $a=1$, meromorphic and even
around $a=0$. Moreover, these functions are polynomials in the
variables~$\log b$, and $\log b_1$, respectively. Further,
$f_k=O(a^2)$  as $a\to0$.
\end{defi}

Recall that the order of the pole at $a=0$ is controlled by a
constant depending only on~$k$. In what follows, we will tacitly
assume that $M$ in the definition of~$b_1$ is sufficiently large
depending on~$k$ (in fact, the order of the pole at $a=0$ in the
previous definition). Since we are only going to consider finitely
many~$k$, this is not  an issue.  Since $\log b_1-\log b= O(a^M)$ we
see that $f_k(a)=O(a^2)$ is therefore the same as
\[
e_0( a,\ln b) + \sum_{j=1}^m
 (1-a)^{\frac12} e_{j}^0( a, \ln b,\ln b) + (1-a)^{-\frac12} \sum_{j=1}^m
  b^{-1}e_{j}^1( a, \ln b,\ln b) = O(a^2)
\]
The left-hand side is a polynomial in $\ln b$, $b^{-1}$, so this
amounts to the corresponding condition for each of its coefficients.
 Now for the main iterative lemma.

\begin{lemma}\label{advancedode}
  The equation
\begin{equation}\label{labv}
 L_{ab}\,  v = f_k \in\calF_{k,m}
\end{equation}
admits an approximate solution
\[
v(a,b) =  b^{-k} V_0( a,\ln b)     +  (1-a)^{\frac12} \sum_{j=1}^m
V_{j}( a,\ln b,\ln b_1)  b^{j-k} b_1^{-j}
\]
where $V_0$, $V_j$ are smooth in $a\in(0,1)$, analytic around $a=1$,
meromorphic around $a=0$,  and polynomial in the variables  $\log
b$, $\log b_{1}$. Moreover,
 $v$ vanishes to fourth order at $a=0$ and
\[
 L_{ab}\,  v - f_k  \in\calF_{k+1,m}+\calF_{k+2,m}
\]
\end{lemma}

\begin{proof} Let $\ell(0)$ be the order of the polynomials appearing in the definition of~$f_k$ relative
to~$\log b$, and $\ell(j)$ the order relative to~$\log b_1$ with
$1\le j\le m$. We first re-write the source term: choose a smooth
partition of unity $\phi_{1,2}(a)$, subordinate to the cover
$(0,1)=(0,2\eps)\cup (\eps, 1)$ for some small $\eps>0$. Then write
\begin{align}
 & \phi_{1}(a)[b^{-k} e_0( a,\ln b)     +  (1-a)^{\frac12} \sum_{j=1}^m
 e_{j}^0( a, \ln b,\ln b_1)  b^{j-k} b_1^{-j}\nn
\\ &+  (1-a)^{-\frac12} \sum_{j=1}^m
  e_{j}^1( a, \ln b,\ln b_1)   b^{j-k-1} b_1^{-j}]\nn
  \\&=\phi_{1}(a)[b^{-k} e_0( a,\ln b)     +  (1-a)^{\frac12} \sum_{j=1}^m
 e_{j}^0( a, \ln b,\ln b)  b^{-k}\label{eq:brackets}
\\ & +  (1-a)^{-\frac12} \sum_{j=1}^m
  e_{j}^1( a, \ln b,\ln b)   b^{-k-1}]
  \label{eq:brackets'}  \\&
   +(\log b-\log
   b_{1})\phi_{1}(a)\tilde{f}_{k}+(b-b_{1})\phi_{1}(a)\tilde{g}_{k+1}\label{eq:bb1diff}
\end{align}
where $\tilde{f}_{k}, \tilde{g}_{k}$ have the same properties as
$f_{k}$. Note that in the expression in brackets
in~\eqref{eq:brackets} and~\eqref{eq:brackets'}, all singular powers
cancel. For \eqref{eq:bb1diff}, expand
\[
\phi_{1}(a)[\log b_1-\log b]=\phi_{1}(a)\log
(1-\frac{\log|p(a)|}{b})=-
\phi_{1}(a)\sum_{j=1}^{N}[\frac{(\frac{\log|p(a)|}{b})^{j}}{j}]+\text{error}
\]
Here we may achieve arbitrarily fast decay in time for the error term upon choosing $N$
large enough, and hence we can discard its contribution. However, now all the terms in
\[
\phi_{1}(a)(\log|p(a)|)^{j}\tilde{f}_{k}, \quad
\phi_{1}(a)(\log|p(a)|)^{j}\tilde{g}_{k+1},\quad j\geq 1,
\]
are smooth up to $a=0$, and so are all terms in
\[
\begin{split}
\phi_{2}(a)f_{k}=\phi_{2}(a)[&\ b^{-k} e_0( a,\ln b)     +  (1-a)^{\frac12} \sum_{j=1}^m
 e_{j}^0( a, \ln b,\ln b_1)  b^{j-k} b_1^{-j}
\\ &\ +  (1-a)^{-\frac12} \sum_{j=1}^m
  e_{j}^1( a, \ln b,\ln b_1)   b^{j-k-1} b_1^{-j}]
\end{split}
\]
These considerations show that we may as well assume that  $e_{0}$,
$e_{j}^{0}$, $e_{j}^{1}$ are each analytic at $a=0$ as well as of
the form $O(a^2)$. With $v$ as in the statement of the lemma, we
compute
\[
\begin{split}
L_{ab}\,  v =& \ b^{-k} L_a V_0 (a,\ln b) +  \sum_{j=1}^m
 b^{j-k} b_1^{-j} L_a
((1-a)^{\frac12} V_{j}( a,\ln b,\ln b_1)) \\
& +     \sum_{j=1}^m  a\partial_b(  b^{j-k}  b_1^{-j}
(1-a)^{-\frac12} V_{j}( a,\ln b,\ln b_1))
  + \text{error}
\end{split}
\]
where $b_1$ is treated as a parameter, i.e., no derivatives fall on
it. Here the last term comes from $\partial_a
\partial_b$ in $L_{ab}\, $ with the $\partial_a$ applied to
$(1-a)^{\frac12}$ and $\partial_b$ applied to $b$ or~$\log b$.
Assuming that $V_j$ are smooth and that $v$ vanishes of order four
at $a=0$ one sees that the error has the desired form
\[
\text{error} \in  a^2 \cQ_{k+1}
\]
This is done  using the same type of calculations leading
to~\eqref{eq:f2block} and the following properties,
cf.~\eqref{eq:main_fact2},
\begin{align*}
 (-2 \partial_b \partial_a + 2(1-a)\partial_a^2
-\partial_a) \frac{(1-a)^{\frac12}}{b_1^k} &=
O\Big(\frac{(1-a)^\frac12}{b_1^{k+1}}  \Big)\\
(-2\partial_{a}\partial_{b}+2(1-a)\partial_{a}^{2}-\partial_{a})[(1-a)^{\frac{1}{2}}(\log
b_{1})^{k}] &=O(\frac{(1-a)^{\frac{1}{2}}(\log
b_{1})^{k-1}}{b_{1}^{2}})
\end{align*}
We also observe that in the second sum in $L_{ab}\,  v$ only
$V_{j}(1,\log b,\log b_1)$ is important. The rest can be also
assigned to the error. Thus matching the $(1-a)^{\frac12}$ like
terms we are left with the equations
\[
L_a V_0 (a,\ln b) = e_0(a,\ln b)
\]
\[
 L_a  (V_{j}(a,\ln b,\ln b_1)
(1-a)^{\frac12}) = e_{j}^0(a,\ln b,\ln b_1)  (1-a)^{\frac12}
\]
Matching the $(1-a)^{-\frac12}$  at $a=1$ we get the boundary conditions
(recall that $b_{1}$ here is treated as a parameter)
\begin{equation}\label{bdry}
 \partial_b(  b^{j-k}  V_{j}(1,\ln b,\ln b_1)) =
 b^{j-k-1} e_{j}^1( 1,\ln b,\ln b_1),  \qquad j = 1,\ldots, m
\end{equation}
More explicitly, \eqref{bdry} means the following. Separating into
monomials in $\log b_1$ we seek $s'$ and $\{c_\ell\}_{\ell=0}^{s'}$
so that
\[
\partial_b \Big( b^{j-k} \sum_{\ell=0}^{s'} c_\ell \log^\ell b\Big) =
b^ {j-k-1} \sum_{\ell=0}^s c_\ell^0 \log^\ell b
\]
for given $s$ and $\{c_\ell^0\}_{\ell=0}^s$.  If $j>k$ then we set
$s':=s$ and
\begin{align*}
  (j-k)c_\ell + (\ell+1)c_{\ell+1} &= c_\ell^0 \qquad 0\le\ell<s\\
  c_s &= \frac{c_s^0}{j-k}
\end{align*}
whereas in case $j=k$ we set $s':=s+1$ and
$c_\ell=\frac{c_{\ell-1}^0}{\ell}$ for all $1\le\ell\le s'$ (in
particular, we generate extra powers of $\log b$ in this case and
$c_0$ is not determined). Write
\[ e_{0}(a, \log b)=\sum_{j=0}^{\ell(0)}P_{j}(a)\log^{j} b\]
with $P_{j}(a)$ is smooth on $[0,1]$, analytic close to~$a=0$, and
$P_j(a)=O(a^2)$. Then we solve the problems
\[ L_{a} V_{0,j}=P_{j},\;\;j=0,\ldots,\ell(0)\]
where we select a solution which is smooth at $a=1$. Using the
notations of~\eqref{eq:psis} and variation of parameters,
\[
V_{0,j}(a)= c\psi_0(a) + c_0\int_a^1
[\psi_0(a)\psi_1(u)-\psi_0(u)\psi_1(a)](\rho_1(u))^{-1}P_j(u)\,du
\]
where $c_0$ is an absolute, and $c$ an arbitrary, constant. Note
that around $a=0$,
\[
V_{0,j}(a)=O(a^2) + c_{0,j}\,\varphi_0(a)
\]
where $L_a\, \varphi_0=0$ and $\varphi_0(a)=a^{-2}(1+O(a^2))$ with
analytic $O(a^2)$ (as can be seen from a power series ansatz).
 Then
define
\[V_{0}(a,\log b):= \sum_{j=0}^{\ell(0)}V_{0,j}(a)\log^{j} b.\]
Even though this expression will in general be singular at $a=0$,
the singular part is of the form
\[
\varphi_0(a)\sum_{j=0}^{\ell(0)}c_{0,j}\log^{j} b
\]
Similarly, we write
\[ e_{j}^{0}(a, \log b, \log b_{1})=
\sum_{k=0}^{\ell(j)}\sum_{\ell+n=k}q_{j,\ell, n}(a) \log^\ell b\;
\log^n b_{1}\] where $q_{j,\ell, n}$ are smooth, analytic around
$a=0$ and vanishing to second order at $a=0$,  and solve the
problems
\[L_{a} [(1-a)^{\frac{1}{2}}V_{j,\ell,n}(a)]= (1-a)^{\frac{1}{2}}q_{j,\ell, n}(a)\]
by variation of parameters, i.e.,
\begin{align*}
&(1-a)^{\frac12}V_{j,\ell,n}(a) = c_{j,\ell,n}\,\psi_1(a)\\
&\quad + c_0\int_a^1
[\psi_0(a)\psi_1(u)-\psi_0(u)\psi_1(a)](\rho_1(u))^{-1}(1-u)^{\frac{1}{2}}q_{j,\ell,
n}(u)\,du \end{align*} where $c_{j,\ell,n}$ is arbitrary.  Note that
$V_{j,\ell,n}(a)$ is smooth around $a=1$. As for the behavior around
$a=0$, one has \[
(1-a)^{\frac12}V_{j,\ell,n}(a)=O(a^2)+c\varphi_0(a)\] as before.
Moreover, since
\[
(1-a)^{-\frac12}\psi_1(a) = 1+O(1-a)
\]
we conclude that $V_{j,\ell,n}(1)$ can be assigned arbitrary values.
This is crucial with regard to the boundary condition~\eqref{bdry}.
More precisely, setting
\[V_{j}(a,\log b, \log b_{1}) := \sum_{k=0}^{\ell(j)}\sum_{\ell+n=k}V_{j,\ell,n}(a)\log^\ell b\; \log^n b_{1}\]
we can satisfy the boundary condition~\eqref{bdry} at $a=1$.
Generally speaking, the approximate solution
\[V_{sing}(a,b):=b^{-k}V_{0}(a,\log
b)+(1-a)^{\frac{1}{2}}\sum_{j=1}^{m}V_{j}(a, \log b, \log
b_{1})b^{j-k}b_{1}^{-j}\] will not be smooth at the origin $a=0$,
let alone vanish to fourth order. To remedy this problem, we
subtract the correction function
\[
\tilde{V}(a,b_{1}):=b_{1}^{-k}\tilde{V}_{0}(a,\log b_{1})+(1-a)^{\frac{1}{2}}\sum_{j=1}^{m}
\tilde{V}_{j}(a, \log b_{1}, \log b_{1})b_{1}^{-k}
\]
which solves the homogeneous equation
$L_{a}\tilde{V}\in\calF_{k+1}+\calF_{k+2}$ and has the same singular
behavior at $a=0$ as $V_{sing}$. More precisely, we first set
$b_1=b$ in $V_{sing}(a,b)$ and write the resulting expression in the
form
\[
b^{-k}\sum_{\nu} V_\nu(a) \log^\nu b_1
\]
In view of our discussion regarding the singularity at $a=0$, we see
that
\[
V_\nu(a)=c_\nu\varphi_0(a)+c_\nu'\varphi_1(a)+O(a^4)
\]
where $O(a^4)$ is analytic and $\varphi_1$ is the regular
homogeneous solution, i.e., $L_a\varphi_1=0$,
$\varphi_1(a)=a^2(1+O(a^2))$.  Hence, we see that
\[
\tilde{V}(a,b_{1}):= b_1^{-k} \sum_{\nu} (c_\nu\,\varphi_0(a)
+c_\nu'\,\varphi_1(a)) \log^\nu b_1
\]
has the desired properties, i.e., \[v:=V_{\rm sing}-\tilde V\]
vanishes to fourth order at $a=0$. Finally, as above one checks that
\[ L_{ab}\, \tilde{V}\in \cQ_{k+1},\]
which therefore is an error. Finally, the error $f_{k+1}+f_{k+2}$
generated by this entire procedure vanishes at least to second order
at the origin as claimed.
\end{proof}

By design, Lemma~\ref{advancedode} allows for arbitrary many
iterations. Therefore, we can now carry out the process leading
to~$v_2$ as explained above, see~\eqref{eq:e10}. At each step we
gain a power of $b^{-1}$ or $b_{1}^{-1}$, while paying at most one
power of $\log b$ and~$\log b_{1}$.  We iterate sufficiently often,
and let
\[
v_{2}=w_{2}+w_{3}+\ldots
\]
By construction $v_2$ vanishes of order four at $a=0$, therefore
we can factor out an $a^4$ to obtain
\[
v_2 \in a^4 IS(1,\cQ_1)
\]
Recalling also that we have neglected terms of the form $(\lambda(t)
t)^{-2}IS^{4}(1)$, we find that the remaining error satisfies
\[
t^{2}e_{2}\in a^{2}IS^{4}(1, \cQ_{2\beta}')+IS^{4}(1,\cQ_{2\beta})
\]
as desired.

\medskip

{\bf Step 3:}  We  now consider the general setup. {\it{Commence
with $e_{2k}$, $k\geq 1$, satisfying \eqref{e2k} and choose
$v_{2k+1}$ so that \eqref{v2k-1}, \eqref{e2k-1} hold with $k$
replaced by $k+1$.}} Note that we can move that  part of $e_{2k}$
which belongs to
\[
a^{2}IS^{4}((\log R)^{k-1}, \cQ'_{2\beta k})
\]
into the next error, $e_{2k+1}$. Hence we only need to deal with the
part of $e_{2k}$ in
\[
IS^{4}((\log R)^{k-1}, \cQ_{2\beta k}),
\]
which we denote as $e_{2k}^{0}$. Proceeding as in Step~1, we then
set
\begin{equation}\begin{split}
(t\lambda)^{2}v_{2k+1}(R,a,b,b_{1})=& \
\Theta(R) \int_0^R \Phi(R')t^{2}e_{2k}^{0}(R',a,b,b_{1}) R'\, dR'\\
&\  - \Phi(R)\int_1^R
  \Theta(R') t^{2}e_{2k}^{0}(R',a,b,b_{1}) R'\, dR'  \nn
  \end{split}\end{equation}
Here we treat $a, b, b_{1}$ as constant parameters. Then it is clear that
\[v_{2k+1}\in IS^{4}(R^{2}(\log R)^{k}, \cQ_{2\beta(k+1)})\]
We need to check that the error satisfies \eqref{e2k-1} for $k+1$
instead of $k$. The error is comprised of the terms arising from
$\partial_{t}^{2}$, when one of the variables $a, b, b_{1}$ is
differentiated, as well as the nonlinear terms. More precisely, we
write
\[
e_{2k+1}=N_{2k+1}(v_{2k+1})+E^{t}v_{2k+1}+E^{a}v_{2k+1}
\]
where the first represents nonlinear errors, the second represents
$\partial_{t}^{2}v_{2k+1}$, and the third represents those
constituents in
\[ (-\partial_{t}^{2}+\partial_{r}^{2}+\frac{1}{r}\partial_{r})v_{2k+1}(R, a,b,b_{1})\]
in which at least one derivative falls on  $a$, or $b$, or $b_{1}$.
It is straightforward to check that
\[t^{2}E^{t}v_{2k+1}\in IS^{4}(R^{2}(\log R)^{k}, \cQ_{2\beta(k+1)})\subset IS^{4}(R^{2}(\log R)^{k}, \cQ_{2\beta(k+1)}')\]
Next, the terms in $t^{2} E^{a}v_{2k+1}$ are of the form
\[[(1-a^{2})\partial_{a}^{2}+(a^{-1}-2a)\partial_{a}]v_{2k+1}(R, a,b,b_{1})\]
\[[(1-a^{2})\partial_{a}+(a^{-1}-2a)](\partial_{a}b_{1}\partial_{b_{1}}v_{2k+1}(R, a,b,b_{1}))\]
\[(1-a^{2})t\partial_{t}aR \partial_{R}\partial_a v_{2k+1}(R,a,b,b_{1})-(1-a^{2})a^{-1}R\partial_a\partial_R v_{2k+1}(R, a,b,b_{1})\]
Each of these is easily seen to be in $ IS^{4}(R^{2}(\log R)^{k}, \cQ_{2\beta(k+1)}')$.
The nonlinear errors are of the form
\[
\frac{6}{r^{2}}(u_{2k}^{2}-Q^{2})v_{2k+1}+\frac{2}{r^{2}}(3v_{2k+1}^{2}u_{2k}+v_{2k+1}^{3})
\]
For the term on the left, expand $u_{2k}=Q+\sum_{1\leq i\leq
2k}v_{i}$. Using that
\[
\sum_{1\leq i\leq 2k}v_{i}\in IS^{4}(R^{2}, \cQ_{2\beta}),
\]
we check that
\[ t^{2}\frac{6}{r^{2}}(u_{2k}^{2}-Q^{2})v_{2k+1}\in  IS^{4}(R^{2}(\log R)^{k}, \cQ_{2\beta(k+1)})\]
Similarly, we get
\[ \frac{2t^{2}}{r^{2}}(3v_{2k+1}^{2}u_{2k}+v_{2k+1}^{3})\in  IS^{4}(R^{2}(\log R)^{k}, \cQ_{2\beta(k+1)})\]

\medskip

{\bf Step 4}
 {\it{Commence with $e_{2k-1}$, $k\geq 1$, satisfying \eqref{e2k-1} and choose $v_{2k}$ so that \eqref{v2k}, \eqref{e2k} hold.}}
 Pick the leading order term in $e_{2k-1}$, which can be written as
 \[
 t^2 e_{2k-1}^{0}:=R^{2}\sum_{j=0}^{k-1}g_{j}(a,b,b_{1})(\log R)^{j},
 \]
 with $g_{j}(a)\in \cQ_{2\beta k}'$. We then claim that the error
 $e_{2k-1}^{1}:=e_{2k-1}- e_{2k-1}^{0}$ can be absorbed into $e_{2k}$. Indeed, we can write
 \[
 e_{2k-1}^{1}=a^{2}e_{2k-1}^{1}+(1-a^{2})e_{2k-1}^{1},
 \]
 and we have $(1-a^{2})\cQ_{2\beta k}'\subset \cQ_{2\beta k}$. Next, rewrite
 \[
  t^{2}e_{2k-1}^{0}=\sum_{j=0}^{k-1}h_{j}(a,b,b_{1})(\log R)^{j},
  \qquad
h_{j}(a,b,b_{1}) = a^2 g_j(a,b,b_1) \in
  a^2\cQ_{2(k-1)\beta}'
  \]
  We first seek an approximate solution $w_{2k}$ for \eqref{vkeven} of
  the form
 \[
w_{2k}= \sum_{j=0}^{k-1} z_{j} (a,b,b_{1}) (\log R)^{j}, \qquad 
z_{j} \in  a^4\cQ_{2(k-1)\beta}
\]
This we then refine, iterating application of
Lemma~\ref{advancedode} sufficiently often to obtain~$v_{2k}$.
 To find the functions $z_j$ we proceed inductively,
 starting with the largest power of $\log R$.
 Indeed, matching corresponding powers of $\log R$, we get a recursive
 system. Denoting 
\[
L^\infty
:=t^{2}\Big(-\partial_{t}^{2}+\partial_{r}^{2}+\frac{1}{r}\partial_{r}-\frac{4}{r^{2}}\Big)
\]
we calculate
\[
\begin{split}
L^\infty w_{2k}(a,b) = & \ \sum_{j=0}^{k-1} \Big\{ \, (\log R)^{j} L^\infty z_{j} 
- 2 (t \partial_t) z_{j}  \, (t \partial_t) (\log R)^{j}
+2 (t \partial_r) z_{j}   (t \partial_r) (\log R)^{j}
\\ & 
\qquad +t^{2}z_j(-\partial_{t}^{2}+\partial_{r}^{2}+\frac{1}{r}\partial_{r})
(\log R)^{j} \Big\}
\\ 
=  &\ \sum_{j=0}^{k-1} \Big\{(\log
R)^{j} L_{ab} z_{j}   + j  (\log
R)^{j-1}  L_{ab}^1  z_{j} + j(j-1) (\log R)^{j-2}  L_{ab}^2  z_{j}
\Big\}
\end{split}
\]
where
\begin{align*}
L_{ab}^1 &=  -2\Big(1+\frac{\beta}{b}\Big)
\Big( a\partial_a + \partial_b + \big(1-\frac{ap'(a)}{p(a)}\big)\partial_{b_1}\Big) 
+2a^{-1}\Big(\partial_a - \frac{p'(a)}{p(a)}\partial_{b_1}\Big)\\
&\quad -1-\frac{\beta}{b}+\frac{\beta}{b^2}  \\ 
L_{ab}^2 &=  \Big(1+\frac{\beta}{b}\Big)^2 + a^{-2}
\end{align*}
This leads to the recursive system for $0 \leq j \leq k-1$,
\begin{equation}
 L_{ab} z_{j} = h_j - (j+1) L_{ab}^1 z_{j+1} - (j+1)(j+2)
 L_{ab}^2 z_{j+2},
\qquad z_k = z_{k+1} = 0
\label{zsyst}\end{equation}
Since $h_j \in a^2\cQ_{2(k-1)\beta}'$ and we seek
approximate solutions $z_j \in  a^4\cQ_{2(k-1)\beta}$, it suffices
to take  the principal part of the system \eqref{zsyst}, namely
\begin{align*}
 L_{ab} z_{j} &= h_j + (j+1)(1+2(a-a^{-1})\partial_{a})z_{j+1} - (j+1)(j+2)(1+a^{-2})z_{j+2}\\
z_k &= z_{k+1} = 0
\label{zsysta}\end{align*}
For this we apply Lemma~\ref{advancedode} to obtain 
approximate solutions $z_j  \in  a^4\cQ_{2(k-1)\beta}$ with lower
order errors 
\[
 L_{ab} z_{j} -( h_j + (j+1)(1+2(a-a^{-1})\partial_{a} z_{j+1}) \in   a^2\cQ_{2(k-1)\beta+1}'
\]
The other terms on the right hand side of \eqref{zsyst} have a similar
form,
\[
 (j+1) (L_{ab}^1+1+2(a-a^{-1})\partial_{a} ) z_{j+1} - (j+1)(j+2)
 L_{ab}^2 z_{j+2} \in  a^2\cQ_{2(k-1)\beta+1}'
\]
In addition to the above error terms, by adding $w_{2k}$ to the
approximate solution we have also generated errors from the nonlinear
terms, which we recall are (upon multiplication by $t^{2}$)
\[
\frac{6t^{2}}{r^{2}}(u_{2k-1}^{2}-1)w_{2k}+\frac{2t^{2}}{r^{2}}(3w_{2k}^{2}u_{2k-1}+w_{2k}^{3})
\]
where $u_{2k-1}=Q+v_{1}+\ldots+v_{2k-1}$. We expand the first term
here in the form
\[
\frac{t^{2}}{r^{2}}(Q-1+v_{1} +\ldots+v_{2k-1})(Q+1+
v_{1} +\ldots+v_{2k-1})w_{2k}
\]
with $v_1 = v_{10} + v_{11}$.
First we write
\[
\frac{t^{2}}{r^{2}}(Q+1)(Q-1)w_{2k}=a^{-4}\frac{R^{2}}{1+R^{2}}\frac{1}{(t\lambda)^{2}}w_{2k}\in IS^{4}((\log R)^{k-1}, \cQ_{2k\beta}),
\]
which we can absorb into $e_{2k}$.  The terms
\[
\frac{t^{2}}{r^{2}}(Q\pm 1) v_{10} w_{2k}
\]
are similar but simpler. On the other hand we recall from Step~1 that
$v_{11}$ satisfies
$v_{11}\in IS^{4}(R^{2},\cQ_{2\beta+1})$. Hence we obtain
\[
\frac{t^{2}}{r^{2}}(Q-1)v_{1}w_{2k}\in a^{2}IS^{4}((\log R)^{k-1}, \cQ_{2(k-1)\beta+1})\subset IS^{4}((\log R)^{k-1}, \cQ_{2(k-1)\beta+1}'),
\]
which we cannot absorb into $e_{2k}$ yet, whence we iteratively
apply the preceding procedure to it. The remaining interactions
satisfy at least
\[
\frac{t^{2}}{r^{2}}(Q\pm 1) v_{j}w_{2k},\,\frac{t^{2}}{r^{2}}v_{i}v_{j}w_{2k}\in  IS^{4}((\log R)^{k-1}, \cQ_{2(k-1)\beta+2}'),\]
and we re-iterate the preceding procedure for those which cannot yet be absorbed into $e_{2k}$.
We similarly deduce
\[
\frac{2t^{2}}{r^{2}}(3w_{2k}^{2}u_{2k-1}+w_{2k}^{3})\in IS^{4}((\log R)^{k-1}, \cQ_{2k\beta}),
\]
which can therefore be absorbed into $e_{2k}$. We now re-iterate
(sufficiently often) the procedure from the beginning of the present
step for those errors which cannot yet be absorbed into $e_{2k}$,
resulting in $w_{2k}=w_{2k}^{0}$, $w_{2k}^{1},\ldots,
w_{2k}^{2\beta}$. Finally,  $v_{2k}:=\sum_{j=0}^{2\beta}w_{2k}^{j}$
has all the desired properties.
\end{proof}

 \section{The analysis of the underlying strongly singular Sturm-Liouville
  operator}
\label{sec:spec}

In this section we develop the scattering and spectral theory of the
linearized operator $\cL$. The main tool developed in this section,
which is crucial to this paper, is the distorted Fourier transform.
The main difference between the linearized operator in~\cite{KST1}
and the one of this paper is that in~\cite{KST1} the linearized
operator had a zero energy resonance and here zero is an eigenvalue.
In both instances, though, there is no negative spectrum (unlike the
semi-linear case~\cite{KST2}, where we had to deal with a negative
eigenvalue and the resulting exponential instabilities).

\begin{defi}
  The  half-line operator
  \[ \cL := -\frac{d^2}{dR^2} + \frac{15}{4R^2} - \frac{24}{(1+R^2)^2} \] on
  $L^2(0,\infty)$ is self-adjoint with domain
  \[ \dom(\cL)=\{ f\in L^2((0,\infty))\::\: f,f'\in
  AC_{{\rm loc}}((0,\infty)),\; \cL f \in L^2((0,\infty)) \}
  \]
\end{defi}

Because of the strong singularity of the potential at $R=0$ no
boundary condition is needed there to insure self-adjointness.
Technically speaking, this means that $\cL_0$ and $\cL$ are in the
{\em limit point case} at $R=0$, see Gesztesy, Zinchenko~\cite{GZ}.
We remark that $\cL_0$ and $\cL$ are in the limit point case at
$R=\infty$ by a standard criterion (sub-quadratic growth of the
potential).

\begin{lemma}
  \label{lem:spec} The spectrum of $\cL$ is purely
  absolutely continuous and equals $\spec(\cL)=[0,\infty)$.
\end{lemma}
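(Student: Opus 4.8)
The plan is to establish the three claims — no negative spectrum, no eigenvalues or singular continuous spectrum in $[0,\infty)$, and $[0,\infty)\subset\spec(\cL)$ — separately, using Sturm--Liouville/Weyl--Titchmarsh theory adapted to the strongly singular endpoint $R=0$. Throughout, the key structural input is that $\cL$ is in the limit point case at both endpoints, so the spectral measure is governed by a single Weyl $m$-function, and that we have an explicit zero-energy solution coming from the instanton: since $\Phi(R)=\frac{R^2}{(1+R^2)^2}$ satisfies $L\Phi=0$ with $L=\partial_R^2+\frac1R\partial_R+\frac2{R^2}(1-3Q^2)$, the conjugated function $\phi_0(R)=R^{1/2}\Phi(R)=\frac{R^{5/2}}{(1+R^2)^2}$ solves $\cL\phi_0=0$ (this is exactly the $\phi_0$ already written down in Section~\ref{sec:mod}, up to the sign convention $\cL=-\tilde L$). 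Crucially $\phi_0\in L^2(0,\infty)$ near both $0$ (it vanishes like $R^{5/2}$) and $\infty$ (it decays like $R^{-3/2}$), so $0$ is an honest $L^2$ eigenvalue; this is consistent with the theorem only because an eigenvalue at the bottom of the essential spectrum is allowed. So strictly what must be shown is: (a) $\spec(\cL)\cap(-\infty,0)=\emptyset$; (b) the spectrum is purely a.c.\ on $(0,\infty)$; (c) $[0,\infty)\subseteq\spec(\cL)$.

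For (a), the cleanest route is a positivity/factorization argument. One shows $\cL\ge 0$ as a quadratic form on $C_c^\infty(0,\infty)$, which is dense as a form core since we are in the limit point case at $0$. The natural way is to exhibit the ground state $\phi_0>0$ on $(0,\infty)$ and perform the ground-state (super-symmetric) factorization: writing $A=-\partial_R + \frac{\phi_0'}{\phi_0}$, one has $A^*A = -\partial_R^2 + V$ with $V = \frac{\phi_0''}{\phi_0} = \frac{15}{4R^2}-\frac{24}{(1+R^2)^2}$, since $\phi_0$ was constructed precisely to annihilate this Schr\"odinger operator. Hence $\langle \cL f,f\rangle = \|Af\|_2^2 \ge 0$ for all $f\in C_c^\infty$, so $\spec(\cL)\subseteq[0,\infty)$. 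One must check that $\phi_0'/\phi_0$ is locally integrable away from $0$ and that the boundary terms in the integration by parts vanish for compactly supported $f$ — both routine given the explicit formula for $\phi_0$.

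For (c), one uses Weyl's criterion: for each $\lambda>0$ construct an approximate eigenfunction sequence. Near $R=\infty$ the potential decays, so $\cL$ looks like $-\partial_R^2$, whose generalized eigenfunctions $e^{\pm i\sqrt\lambda R}$ are bounded; cutting off $e^{i\sqrt\lambda R}$ to a window $[n,2n]$ and normalizing produces Weyl sequences showing $\lambda\in\spec(\cL)$ for every $\lambda\ge 0$. For (b), the purely-a.c.\ conclusion on $(0,\infty)$ follows from the short-range nature of the perturbation: the potential $-\frac{24}{(1+R^2)^2}$ is $O(R^{-4})$, hence integrable at infinity, so by standard one-dimensional scattering theory (e.g.\ the limiting absorption principle, or a subordinacy/Gilbert--Pearson argument, or the fact that for $R^{-1-\eps}$-decaying potentials all generalized eigenfunctions at positive energy are bounded and non-subordinate) the spectrum is purely absolutely continuous on $(0,\infty)$, with at most eigenvalues embedded at thresholds — here only the threshold $0$, which is indeed an eigenvalue but does not affect absolute continuity of the part on $(0,\infty)$. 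The strong singularity at $R=0$ is harmless for this because it is the limit point case and the a.c.\ spectrum is determined entirely by the behavior at $R=\infty$.

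The main obstacle is the endpoint $R=0$: one must justify carefully that the strongly singular centrifugal term $\frac{15}{4R^2}$ (coefficient $\ge 3/4$) puts $\cL$ in the limit point case there — this is stated in the excerpt citing Gesztesy--Zinchenko~\cite{GZ} — and, more subtly, that this singularity does not generate any additional spectrum: in particular that there is no eigenvalue or resonance hiding because of a bad $L^2$ solution near $0$. Concretely, near $R=0$ the two Frobenius solutions behave like $R^{5/2}$ and $R^{-3/2}$, and only the former is $L^2(0,1)$, so for any $\lambda\in\Compl$ the $L^2(0,1)$ solution is one-dimensional — this is the real content of the limit point property and it is what makes the Weyl $m$-function well defined and hence the spectral analysis standard. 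Verifying this Frobenius analysis at $0$ uniformly in $\lambda$, and then invoking the short-range scattering theory at $\infty$ to conclude pure absolute continuity on $(0,\infty)$, constitutes the bulk of the argument; everything else (positivity via factorization, Weyl sequences at $+\infty$) is soft.
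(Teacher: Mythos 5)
Your proof follows essentially the same route as the paper's two-sentence argument — positivity from the explicit ground state $\phi_0$, and absolute continuity on $(0,\infty)$ from integrability of the potential at infinity — but spells out each step (ground-state factorization in place of the paper's appeal to the Sturm oscillation theorem, Weyl sequences for $[0,\infty)\subseteq\spec(\cL)$, Frobenius analysis at $R=0$ for the limit point case). You also correctly flag a genuine imprecision in the lemma as stated: since $\phi_0\sim R^{5/2}$ at $0$ and $\sim R^{-3/2}$ at $\infty$, it lies in $L^2(0,\infty)\cap\dom(\cL)$, so $0$ is an honest eigenvalue and the spectrum is \emph{not} literally purely absolutely continuous; there is a point mass $\|\phi_0\|_2^{-2}\delta_0$ in the spectral measure, exactly as the paper itself records in Theorem~\ref{thm:GZ}(c) and in the introduction to Section~\ref{sec:spec} (``here zero is an eigenvalue''). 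The lemma should be read as asserting pure absolute continuity on $(0,\infty)$ together with $\spec(\cL)=[0,\infty)$, and your argument establishes precisely that.
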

\begin{proof}
  That $\cL$ has no
  negative spectrum follows from
\begin{equation}
  \label{eq:reson}
  \cL \phi_0 =0, \qquad \phi_0(R)= \frac{R^{5/2}}{(1+R^2)^2}
\end{equation}
with $\phi_0$ positive (by the Sturm oscillation theorem). The
purely absolute continuity of the spectrum of $\cL$ is an immediate
consequence of the fact that the potential of $\cL$ is integrable at
infinity.
\end{proof}

We now briefly summarize the results from~\cite{GZ} relevant for our
purposes, see Section~3 in their paper, in particular Example~3.10.

\begin{theorem}\label{thm:GZ}
a) For each $z \in \Compl$ there exists a fundamental system
$\tilphi(R,z)$, $\tiltheta(R,z)$ for $\cL-z$ which is analytic in
$z$ for each $R > 0$ and has the asymptotic behavior
\begin{equation}\label{eq:phitheta}
\tilphi(R,z) \sim R^\frac52, \qquad \tiltheta(R,z) \sim \frac14
R^{-\frac32} \text{\ \ as\ \ }R\to0
\end{equation}
In particular, their Wronskian is
$W(\theta(\cdot,z),\phi(\cdot,z))=1$ for all $z\in\Compl$. We remark
that $\phi(\cdot,z)$ is the Weyl-Titchmarsh solution\footnote{Our
$\phi(\cdot,z)$ is the $\tilde\phi(z,\cdot)$ function from~\cite{GZ}
where the analyticity is only required in a strip around~$\R$ -- but
here there is no need for this restriction.} of $\cL-z$ at $R=0$. By
convention, $\phi(\cdot,z), \theta(\cdot,z)$ are real-valued for
$z\in\R$.

b) For each  $ z\in\Compl$, $\Im z>0$, let $\psi^+(R,z)$ denote the
Weyl-Titchmarsh solution of $\cL-z$ at $R=\infty$ normalized so that
\[
\psi^+(R,z) \sim z^{-\frac14}\,e^{i z^{\frac12} R} \text{\ \ as\ \
}R\to \infty,\;\Im z^{\frac12}>0
\]
If $\xi>0$, then the limit $\psi^+(R,\xi+i0)$ exists point-wise for
all $R>0$ and we denote it by $\psi^+(R,\xi)$. Moreover, define
$\psi^-(\cdot,\xi):=\overline {\psi^+(\cdot,\xi)}$. Then
$\psi^+(R,\xi)$, $\psi^-(R,\xi)$ form a fundamental system of
$\cL-\xi$ with asymptotic behavior $\psi^\pm(R,\xi)\sim
\xi^{-\frac14}\,e^{\pm i\xi^{\frac12} R}$ as $R\to\infty$.

c) The spectral measure of $\cL$ is  given by
\begin{equation} \label{sm}
\mu(d\xi )  =  \|\phi_0\|_2^{-2}\delta_0+ \rho(\xi)\, d\xi,\quad
\rho(\xi):=\frac{1}{\pi} \Im\; \tilm(\xi +i0)\,\chi_{[\xi>0]}
\end{equation}
with the ``generalized Weyl-Titchmarsh" function
\begin{equation}\label{mw}
  \tilm(\xi) =\frac{W({\theta}(.,\xi),\,\psi^{+}(.,\xi))}{W(\psi^{+}(.,\xi),
    {\phi}(.,\xi))}
\end{equation}

d) The distorted Fourier transform defined as
  \begin{equation}\nonumber
    \calF: f\longrightarrow \hat{f}(\xi)=\lim_{b\rightarrow
      \infty}\int_{0}^{b}\tilphi(R,\xi)f(R)\,dR
  \end{equation} for all $\xi\ge 0$
  is a unitary operator from $L^2 (\R^+)$ to
  $L^{2}({{\R^+}},{\mu})= \R\oplus L^{2}({{\R^+}},\rho) $ and its inverse is given by
 \begin{equation}\label{eq:inverse}
    \calF^{-1}: \hat{f}\longrightarrow f(R)= \hat{f}(0)\|\phi_0\|_2^{-2}\phi_0(R)+ \lim_{s \rightarrow
      \infty}\int_{0}^{s} \tilphi(R,
    \xi)\hat{f}(\xi)\,{\rho}(\xi)\,d\xi
  \end{equation}
Here $\lim$ refers to the $L^{2}({{\R^+}},\mu)$, respectively the
$L^2(\R^+)$, limit.
\end{theorem}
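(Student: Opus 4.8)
The statement collects, for the specific operator $\cL$, the conclusions of the general Weyl--Titchmarsh and scattering theory for one-dimensional Schr\"odinger operators that are in the limit point case at \emph{both} endpoints, as developed in~\cite{GZ} (their Example~3.10 is precisely of this form). Accordingly, the plan is not to reprove that machinery but to verify that $\cL$ falls within its scope and to read off the stated formulas. The limit point classification at $R=0$ is automatic from the coefficient $\tfrac{15}{4}>\tfrac34$ of the inverse-square term, and at $R=\infty$ from the subquadratic (here integrable) decay of the potential; so no boundary conditions enter and $\cL$ is self-adjoint on the stated domain. The two endpoints are then handled separately: at $R=0$ one exploits that $\cL$ is a regular-singular (Fuchsian) perturbation of the Bessel operator $-\partial_R^2+\tfrac{15}{4R^2}$, while at $R=\infty$ one treats it as a short-range perturbation of the same operator.

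For part (a), write $\cL-z=-\partial_R^2+\tfrac{15}{4R^2}+W(R)-z$ with $W(R)=-24(1+R^2)^{-2}$ even and analytic at $R=0$. The indicial equation $\mu(\mu-1)=\tfrac{15}{4}$ has roots $\mu=\tfrac52$ and $\mu=-\tfrac32$. The Frobenius method produces $\phi(R,z)=R^{5/2}\sum_{n\ge0}a_n(z)R^{2n}$, with the $a_n(z)$ determined by a two-step recursion hence polynomial in $z$ and the series converging locally uniformly in $(R,z)$; this gives both the asymptotics in \eqref{eq:phitheta} and the claimed entire dependence on $z$. Since the indicial roots differ by the even integer $4$, the second solution generically carries a logarithm, and it does so here: $\theta(R,z)=\tfrac14 R^{-3/2}\sum_{n\ge0}b_n(z)R^{2n}+c(z)\,\phi(R,z)\ln R$ with entire coefficients, matching the appearance of $\Phi(R)\ln R$ inside $\Theta$ noted in Section~\ref{sec:mod}. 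One fixes the remaining free constant by normalizing $W(\theta(\cdot,z),\phi(\cdot,z))=1$, which is legitimate since the Wronskian is $R$-independent; the subleading $R^{5/2}\ln R$ piece of $\theta$ is lower order than $R^{-3/2}$, so \eqref{eq:phitheta} holds. For part (b), at $R=\infty$ one has $W(R)=O(R^{-4})$, hence $\int_1^\infty(1+R)|W(R)|\,dR<\infty$, so $\cL-z$ is a short-range perturbation of the Bessel operator, whose distinguished solution $\sim z^{-1/4}e^{iz^{1/2}R}$ (built from $\sqrt R\,J_2(\sqrt z R)$ up to the usual phase normalization) is well understood; solving the associated Volterra integral equation produces $\psi^+(R,z)$ with the stated asymptotics, analytic in $\Im z>0$, and --- thanks to the $(1+R)$-weighted integrability of $W$ at infinity --- with pointwise boundary values $\psi^+(R,\xi+i0)$ for every $\xi>0$. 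Putting $\psi^-:=\overline{\psi^+}$ gives the fundamental system at~$\infty$.

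For parts (c) and (d), with the Weyl solution $\phi(\cdot,z)$ at $0$ and $\psi^+(\cdot,z)$ at $\infty$ in hand, the singular Weyl function of $\cL$ relative to the basis $\{\phi,\theta\}$ is exactly the expression \eqref{mw}, and the theory of~\cite{GZ} identifies the spectral measure of $\cL$ as $\tfrac1\pi\Im \tilm(\xi+i0)\,\chi_{[\xi>0]}\,d\xi$ plus point masses at the eigenvalues. By Lemma~\ref{lem:spec} the continuous part is purely absolutely continuous and equals $[0,\infty)$, and the only point in the point spectrum is $\xi=0$, with eigenfunction $\phi_0(R)=R^{5/2}(1+R^2)^{-2}\in L^2$, cf.~\eqref{eq:reson}; its contribution to the spectral measure is the normalized atom $\|\phi_0\|_2^{-2}\delta_0$, which gives \eqref{sm}. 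The distorted Fourier transform $\calF$ and its inverse \eqref{eq:inverse} are then the Weyl--Kodaira eigenfunction expansion attached to this spectral measure: the generalized eigenfunction is $\tilphi(R,\xi)$ because it is the normalized Weyl solution at the singular endpoint $R=0$, the $\delta_0$ piece of the inverse transform reconstitutes the $L^2$-eigenfunction component, and unitarity of $\calF:L^2(\R^+)\to L^2(\R^+,\mu)$ is the abstract spectral theorem for $\cL$.

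The main obstacle is the bookkeeping at $R=0$: one must check that $\cL$ satisfies the precise hypotheses under which~\cite{GZ} constructs the analytic fundamental system and the singular Weyl function, and one must correctly isolate the zero eigenvalue and compute its weight $\|\phi_0\|_2^{-2}$ in the spectral measure. Since zero sits at the very edge of the continuous spectrum, the key structural fact is that $\phi_0\in L^2$ --- this is exactly where Yang--Mills differs from the zero-energy resonant case of~\cite{KST1}, and it is what turns the would-be resonance into a genuine eigenvalue carrying an atom. A secondary point is establishing the boundary regularity of $\psi^+(\cdot,\xi+i0)$ for $\xi>0$, which makes the absolutely continuous density $\rho(\xi)$ a well-defined, locally integrable function on $(0,\infty)$; this is precisely where the $(1+R)$-weighted integrability of the potential at infinity is used.
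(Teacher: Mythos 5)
Your proposal is correct and takes essentially the same approach as the paper: the paper presents this theorem as a summary of the Weyl--Titchmarsh theory developed in~\cite{GZ} (their Example~3.10) and does not re-prove it, and you likewise reduce the statement to checking that $\cL$ meets the hypotheses of that theory (limit point at $R=0$ since $\tfrac{15}{4}>\tfrac34$, limit point at $R=\infty$ by sub-quadratic growth, short-range decay of the bounded part of the potential) and then reading off the conclusions. Your additional details --- the indicial roots $\tfrac52,-\tfrac32$, the logarithm from the integer gap $4$, the Volterra construction of $\psi^+$, and the identification of the atom $\|\phi_0\|_2^{-2}\delta_0$ from $\phi_0\in L^2$ --- are all consistent with the paper's explicit formulas and accurately locate the one place where this problem diverges from the wave-maps setting.
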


\begin{remark}\label{rem:vector}
It is best to view the distorted Fourier transform of any $f\in L^2
(\R^+)$ as a vector, namely $f\mapsto \binom{a}{g(\cdot)}$ where
$a\in\R$ and $g\in L^{2}({{\R^+}},\rho)$. The inversion formula
being
\[
f = a \|\phi_0\|_2^{-2}\phi_0+ \int_{0}^{\infty} \tilphi(\cdot,
    \xi)g(\xi)\,{\rho}(\xi)\,d\xi
\]
The first term is the projection of $f$ onto $\phi_0$, whereas the
second  one is the projection onto the orthogonal complement
of~$\phi_0$. We remark that
\[
\|\phi_0\|_2^2 = \int_0^\infty \frac{R^5}{(1+R^2)^4}\, dR = \frac16
\]
\end{remark}

\subsection{Asymptotic behavior of $\tilphi$ and $\tiltheta$}

Beginning with two explicit solutions for $\cL f=0$, namely
\[
\phi_0(R) = \frac{R^\frac52}{(1+R^2)^2}, \qquad \theta_0(R) =
\frac{-1-8R^2 + 24 R^4 \ln R +8 R^6 + R^8}{4R^\frac32(1+R^2)^2}
\]
we construct power series expansions for $\tilphi$
from~\eqref{eq:phitheta} in $z \in\Compl$ when $R>0$ is fixed. A
similar expansion is possible for $\tiltheta(R,z)$. Since is it not
only more complicated but also not needed here, we skip it.

\begin{proposition}  \label{pphitheta} For any $z\in\Compl$
the solution $\tilphi(R,z)$ from Theorem~\ref{thm:GZ} admits an
absolutely convergent asymptotic expansion
\begin{align*}
\tilphi(R,z) &=  \phi_0(R) + R^{-\frac32} \sum_{j=1}^\infty   (R^2
z)^{j} \tilde \phi_j(R^2)
\end{align*}
The functions $\tilde \phi_j$ are holomorphic in $\Omega = \{ \Re u
> -\frac12\} $ and satisfy the bounds
\[
|\tilde \phi_j(u)| \leq \frac{C^j}{j!}|u|^2\la u\ra^{-1}, \quad
j\ge1
\]
for all $u\in\Omega$. In particular\footnote{If $a,b>0$, then $a\ll
b$ means that $a<\eps b$ for some small constant $\eps>0$, whereas
$a\asymp b$ means that for some constant $C>0$ one has $C^{-1}
a<b<Ca$}, in the region $\xi^{-\frac14}\ll R \ll \xi^{-\frac12}$,
\begin{equation}
  \label{eq:phiR2}
  \begin{split}
  |\phi(R,\xi)| &\asymp R^4\xi \phi_0(R) \asymp
  R^{\frac52}\xi\\
  |\pr_R \phi(R,\xi)| &\asymp
  R^{\frac32}\xi
  \end{split}
\end{equation}
\end{proposition}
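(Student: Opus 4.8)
The plan is to start from the defining ODE $\cL\tilphi=z\tilphi$, i.e.
\[
-\tilphi'' + \Big(\frac{15}{4R^2} - \frac{24}{(1+R^2)^2}\Big)\tilphi = z\,\tilphi ,
\]
and treat the $z$-term as a perturbation of the known solution $\phi_0$ with $\cL\phi_0=0$. Since $\phi_0,\theta_0$ form a fundamental system for $\cL$ with Wronskian normalized appropriately, variation of parameters converts the eigenvalue equation, together with the initial asymptotics $\tilphi\sim R^{5/2}$ as $R\to0$, into the Volterra integral equation
\[
\tilphi(R,z) = \phi_0(R) + z\int_0^R \big(\phi_0(R)\theta_0(S) - \theta_0(R)\phi_0(S)\big)\,\tilphi(S,z)\,dS .
\]
Iterating this produces the series $\tilphi=\sum_{j\ge0} z^j \phi^{(j)}$ with $\phi^{(0)}=\phi_0$ and $\phi^{(j)}$ defined by the obvious recursion. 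The first task is to make the substitution $u=R^2$ and verify that each $\phi^{(j)}$ has exactly the claimed shape $R^{-3/2}(R^2 z)^j \tilde\phi_j(R^2)$ with $\tilde\phi_j$ holomorphic on $\Omega=\{\Re u>-\tfrac12\}$; this is a matter of checking that the Green's kernel $\phi_0(R)\theta_0(S)-\theta_0(R)\phi_0(S)$, after the change of variables, is a rational expression in $u,v$ with the only pole at $v=-1/2$ coming from the $(1+S^2)$ factors, and that the $\log R$ present in $\theta_0$ cancels in the kernel (it does, since the combination is a genuine Wronskian-type difference and the logarithmic terms appear symmetrically).

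The heart of the argument is the quantitative bound $|\tilde\phi_j(u)|\le \tfrac{C^j}{j!}|u|^2\langle u\rangle^{-1}$. I would prove this by induction on $j$. The base case $j=1$ is an explicit integral. For the inductive step one writes the recursion for $\tilde\phi_{j+1}$ as an integral over the kernel against $\tilde\phi_j$; the key structural facts to extract are (i) the kernel, together with the $dS$ measure rewritten in $u$, contributes a factor that behaves like $u$ near $u=\infty$ and like $u^2$ near $u=0$ relative to the natural weights, which is exactly what forces the weight $|u|^2\langle u\rangle^{-1}$ to reproduce itself, and (ii) each iteration costs one factor of a fixed constant $C$ (from bounding the kernel on the contour of integration in $\Omega$) together with the $1/j$ improvement that comes from integrating $|v|^{\,j-1}$-type growth against $dv$ up to $|u|$, which yields the $1/j!$. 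One must be a little careful that the integration path can be taken inside $\Omega$ — i.e.\ that for $u\in\Omega$ the segment from $0$ to $u$ (or a suitable deformation avoiding $v=-1/2$) stays in $\Omega$ — and that holomorphy is preserved under the integral, which follows from uniform convergence of the series on compact subsets of $\Omega$, itself a consequence of the $C^j/j!$ bound. The absolute convergence of the asymptotic expansion claimed in the statement is then immediate from these bounds.

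Finally, for the last assertion \eqref{eq:phiR2}, in the regime $\xi^{-1/4}\ll R\ll \xi^{-1/2}$ one has $R^2\xi = R^4\xi \cdot R^{-2}$ small — more precisely $R^2\xi\ll 1$, so the $j=1$ term $R^{-3/2}\cdot R^2\xi\cdot\tilde\phi_1(R^2)$ dominates the tail $\sum_{j\ge2}$, while the leading term $\phi_0(R)$ is of size $R^{5/2}$; comparing $R^{5/2}$ with the $j=1$ contribution $\sim R^{5/2}\cdot R^2\xi$ one sees the perturbative term is the larger one precisely when $R^2\xi\gg 1$, so actually in this window it is the $j\ge1$ part that dominates and $|\phi(R,\xi)|\asymp R^{-3/2}R^2\xi\,|\tilde\phi_1(R^2)|\asymp R^{5/2}\xi$ — wait, this needs the lower bound $\tilde\phi_1(R^2)\asymp R^2$ in this range, which comes from the explicit formula for $\tilde\phi_1$ and the fact that $R\gg\xi^{-1/4}$ keeps us away from its zero. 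I would therefore compute $\tilde\phi_1$ explicitly, read off $\tilde\phi_1(u)\asymp u$ for $u$ in the relevant range, and then $R^{5/2}\xi \asymp R^4\xi\phi_0(R)$ upon using $\phi_0(R)\asymp R^{1/2}$ for $R$ large (and $R^2\xi\ll 1$ keeping $\phi_0(R)\asymp R^{1/2}$ valid, i.e.\ $R\gg1$ which follows from $R\gg\xi^{-1/4}$ and $\xi$ small); the derivative bound $|\pr_R\phi(R,\xi)|\asymp R^{3/2}\xi$ follows by differentiating the $j=1$ term, the differentiated tail again being lower order. The main obstacle in all of this is the inductive constant-tracking in the second paragraph: getting the self-reproducing weight $|u|^2\langle u\rangle^{-1}$ exactly right, and making sure the contour of integration stays in the half-plane $\Omega$ while preserving holomorphy, is the delicate point; the rest is bookkeeping.
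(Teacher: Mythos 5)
Your overall strategy is the same as the paper's: expand $\tilphi(R,z)=\sum_{j\ge 0}z^j\phi^{(j)}(R)$ via the Volterra iteration with kernel $\phi_0(R)\theta_0(S)-\theta_0(R)\phi_0(S)$, substitute $u=R^2$, and establish the $C^j/j!$ bounds with the self-reproducing weight $|u|^2\langle u\rangle^{-1}$ by induction, then read off \eqref{eq:phiR2} from the size of the $j=1$ term. So the skeleton is right.

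There is, however, a genuine gap in the step you wave away in the first paragraph. You assert that the logarithm in $\theta_0$ ``cancels in the kernel \ldots since the logarithmic terms appear symmetrically.'' This is false: the kernel $\phi_0(R)\theta_0(S)-\theta_0(R)\phi_0(S)$ carries $\log S$ from the first term and $\log R$ from the second, which are logarithms of \emph{different} variables and do not cancel. After the substitution $u=R^2$, $v=S^2$ the paper's explicit recursion \eqref{eq:tilfj} still contains both $\log u$ and $\log v$. The paper therefore cannot simply claim the kernel is rational; it must show directly that each iterate $f_j$ is nonetheless single-valued and even in $R$: one fixes a branch of $\log$ in $\Compl\setminus\R^-$, checks the monodromy relation $f_j(R+i0)=f_j(R-i0)$ for $R<0$ (which uses that $\log(R'+i0)-\log(R+i0)=\log(R'-i0)-\log(R-i0)$ for negative reals), concludes the singularity at $0$ is isolated, then shows it is removable by a crude size bound near $0$, and finally gets evenness by rewriting $2\log R$ as $\log R^2$. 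Without this argument your claim that each $\tilde\phi_j$ is a single-valued holomorphic function on $\Omega$ does not follow, and the whole induction is built on sand. (Also a small slip: the pole after the change of variables sits at $v=-1$, not $v=-\tfrac12$; the restriction $\Re u>-\tfrac12$ is what keeps the path from $0$ to $u$ safely away from $-1$.)

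Two minor corrections in the final paragraph: for $R\gg 1$ one has $\phi_0(R)\asymp R^{-3/2}$, not $R^{1/2}$, and it is $R^4\xi\cdot R^{-3/2}$ that gives $R^{5/2}\xi$; and the dichotomy you reach for (``the perturbative term is larger when $R^2\xi\gg 1$'') is irrelevant here — in the window $\xi^{-1/4}\ll R\ll\xi^{-1/2}$ one has $R^2\xi\ll 1$ (so $j=1$ beats the tail $j\ge 2$) \emph{and} $R^4\xi\gg 1$ (so $j=1$ beats $j=0$), which is exactly what the two-sided constraint on $R$ buys you, together with the lower bound $|\tilde\phi_1(u)|\gtrsim u$ for $u\gg 1$ that the paper records at the end of its proof.
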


\begin{proof}
Write $\phi(R,z)=\sum_{j=0}^\infty z^j \phi_j(R)$. The functions
$\phi_j$ then need to solve $ \cL \phi_j = \phi_{j-1}$. Since
$\phi_0$ is not analytic, it is technically convenient to set
$\phi_j(R) = R^{-\frac32} f_j(R)$ (note that $R^{-\frac32}$ is the
decay of $\phi_0$). Our system of ODEs is then, with $j\ge1$,
\[
\cL(R^{-\frac32} f_j) = R^{-\frac32} f_{j-1},\quad f_0(R) =
\frac{R^4}{1+R^4}
\]
The forward fundamental solution for $\cL$ is
\[
H(R,R') = (\phi_0(R) \theta_0(R') -\phi_0(R') \theta_0(R))1_{[R
> R']}
\]
Hence we have the iterative relation
\begin{align*}
f_j(R) &= \int_0^R  R^{\frac32} (R')^{-\frac32}  (\phi_0(R)
\theta_0(R') -\phi_0(R') \theta_0(R)) f_{j-1}(R')\, dR', \\
f_0(R) &= \frac{R^{4}}{(1+R^2)^2}
\end{align*}
Using the expressions for $\phi_0$, $\theta_0$ we rewrite this as
\[\begin{split}
f_j(R) &= 
\int_0^R  \Big[ R^{4}(-1-8R'^2 + 24 R'^4 \ln R' +8 R'^6 + R'^8) - \\
&- R'^{4}(-1-8R^2 + 24 R^4 \ln R +8 R^6 +
R^8) \Big] \frac{f_{j-1}(R')R'}{R'^4(1+R^2)^2(1+R'^2)^2} \, dR' 
\end{split}
\]
We claim that all functions $f_j$ extend to even holomorphic
functions in any even simply connected domain not containing $\pm
i$, vanishing at $0$. Indeed, we now suppose that $f_{j-1}$ has
these properties and we shall prove them for $f_j$. Clearly, $f_j$
extends to a holomorphic function in any even simply connected
domain not containing $\pm i$ and $0$. We first show that at $0$
there is at most an isolated singularity.  For this we consider a
branch of the logarithm which is holomorphic in $\Compl \setminus
\R^-$ and show that $f_j(R+i0) = f_j(R-i0)$ for $R < 0$.
Disregarding the terms not involving logarithms, we need to show
that for any holomorphic function $g$ we have
\[
\int_{0}^{R+i0} (\log R' - \log (R+i0)) g(R')\, dR' =
\int_{0}^{R-i0} (\log R' - \log (R-i0)) g(R') \, dR'
\]
This is obvious since for $R' < 0$ we have
\[
\log (R'+i0) - \log (R+i0) = \log (R'-i0) - \log (R-i0)
\]
The singularity at $0$ is a removable singularity. Indeed, for $R'$
close to $0$ we have $|f_{j-1}(R')| \lesssim |R'|$ which by a crude
bound on the denominator in the above integral leads to $ |f_{j}(R)|
\lesssim |R|$ (again with $R$ close to $0$). This also shows that
$f_j$ vanishes at $0$ (better bounds will be obtained below).
The fact that $f_j$ is even is obvious if we substitute $2 \log R'$
and $2 \log R$ by $\log R'^2$ respectively $\log R^2$ in the
integral. This is allowed since due to the above discussion we can
use any branch of the logarithm. Indeed, denoting $\tilde
f_{j-1}(R'^2) = f_{j-1}(R')$ the  change of variable $R'^2 = v$
yields the iterative relation, with $\tilde f_0 (u) =
\frac{u^2}{(1+u)^2}$,
\begin{equation}\label{eq:tilfj}\begin{split}
\tilde f_j(u) = 
& \int_0^{u}  \Big[ u^2(-1-8 v + 12 v^2\log v +8v^3
+v^4) \\
& -v^2(-1-8u+ 12u^2 \log u
  + 8u^3+u^4)\Big] \frac{\tilde f_{j-1}(v)}{2v^2(1+u)^2(1+v)^2} \, dv
\end{split}\end{equation}
Next, we obtain bounds on the functions $\tilde f_j$. To avoid the
singularity at $-1$ we restrict ourselves to the region $U = \{ \Re
u > -\frac12\}$.  We claim that the $\tilde f_j$ satisfy the bound
\[
|\tilde f_j(u)| \leq \frac{C^{j}}{j!}  |u|^{j+2}\la u\ra^{-1}
\]
The kernel above can be estimated by
\[
\left|\frac{u^2(-1-8 v + 12 v^2\log v +8v^3 +v^4) -v^2(-1-8u+ 12u^2
\log u
  + 8u^3+u^4)}{2v^2(1+u)^2(1+v)^2}\right| \leq C \frac{|u|^2}{|v|^2}
\]
We have
\[
|\tilde f_0(u)| \leq  \frac{|u|^2}{1+|u|^2}
\]
which yields
\[
|\tilde f_1(u)| \leq C\, |u|^2 \int_{0}^{|u|} \frac{1}{1+x^2}\, dx
\le C\, |u|^3\la u\ra^{-1}
\]
From here on we use induction, noting that for $j\ge1$
\[
|\tilde f_{j+1}(u)|\le \frac{C^{j}}{j!} \int_{0}^{|u|} x^{j}\la
x\ra^{-1}|u|^2 \, dx \leq \frac{C^{j+1}}{(j+1)!} |u|^{j+3}\la
u\ra^{-1}
\]
Finally, note that the functions $\tilde \phi_j$ are given by
$\tilde\phi_j (u) = u^{-j} \tilde f_j(u)$ and satisfy the desired
pointwise bound.

The statement \eqref{eq:phiR2} follows from the fact that $|\tilde
\phi_1(u)|\gtrsim u$ for $u\gg1$.
\end{proof}

 We note that although the above series for $\tilphi$
converges for all $R,z$, we can only use it to obtain various
estimates for $\tilphi$ in the region $|z| R^2 \lesssim 1$. On the
other hand, in the region $\xi R^2 \gtrsim 1$ where $z=\xi>0$,  we
will represent $\tilphi$ in terms of $\psi^+$ and use the $\psi^+$
asymptotic expansion, described in what follows.

\subsection{The asymptotic behavior of $\psi^+$}

The following result provides good asymptotics for $\psi^+$ in the
region $R^2 \xi \gtrsim 1$.

\begin{proposition} For any $\xi>0$, the solution  $\psi^+(\cdot,\xi)$ from Theorem~\ref{thm:GZ}  is of
the form
\[
\psi^+(R,\xi) =  \xi^{-\frac14}e^{iR \xi^\frac12}
\sigma(R\xi^\frac12,R),\qquad R^2\xi\gtrsim 1
\]
where $\sigma$ admits the asymptotic series approximation
\[
\sigma(q,R) \approx \sum_{j=0}^\infty q^{-j} \psi^+_j(R), \qquad
\psi^+_0 = 1, \qquad \psi_1^+ = \frac{15i}{8} + O(\frac{1}{1+R^2})
\]
with zero order symbols  $\psi^+_j(R)$ that are analytic at
infinity,
\[\sup_{R>0}
|(R \partial_R)^k \psi^+_j(R)| <\infty
\]
in the sense that for all large integers $j_0$, and all indices
$\alpha$, $\beta$, we have
\[
\sup_{R>0}\Bigl|(R \partial_R)^\alpha (q \partial_q)^\beta
\Big[\sigma(q,R)
  - \sum_{j=0}^{j_0} q^{-j} \psi_j^+(R)\Big]\Bigr| \leq
c_{\alpha,\beta,j_0}  q^{-j_0-1}
\]
for all $q>1$.
 \label{ppsipsi}\end{proposition}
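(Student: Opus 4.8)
The plan is to treat this as a Liouville--Green (WKB) analysis in the region $q=R\xi^{1/2}\gtrsim1$, with $1/q$ the small parameter. First I would substitute $\psi^+(R,\xi)=\xi^{-1/4}e^{iq}\sigma(q,R)$ into $\calL\psi^+=\xi\psi^+$, treating $q$ and $R$ as independent variables; writing $\xi^{1/2}=q/R$ and $W(R):=R^2 V(R)=\tfrac{15}{4}-\tfrac{24R^2}{(1+R^2)^2}$ (where $V$ is the potential of $\calL$, which satisfies $|V(R)|\lesssim R^{-2}$ uniformly) one obtains the transport equation
\[
2i\,\partial_q\sigma+\frac{2R}{q}\,\partial_R\sigma+\partial_q^2\sigma+\frac{2R}{q}\,\partial_q\partial_R\sigma+\frac{R^2}{q^2}\,\partial_R^2\sigma-\frac{W(R)}{q^2}\,\sigma=0,
\]
to be solved with the normalization $\sigma\to1$ forced by the Weyl--Titchmarsh condition $\psi^+\sim\xi^{-1/4}e^{iR\xi^{1/2}}$ at $R=\infty$.

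Next I would construct the formal symbol expansion $\sigma\approx\sum_{j\ge0}q^{-j}\psi_j^+(R)$. Substituting and matching coefficients of $q^{-m}$ yields a triangular recursion: the $q^{-1}$ coefficient forces $\partial_R\psi_0^+=0$, hence $\psi_0^+\equiv1$; and for $n\ge1$ one gets the first order linear ODE
\[
2R\,\partial_R\psi_n^++2in\,\psi_n^+=\big(W(R)-R^2\partial_R^2+2R(n-1)\partial_R-n(n-1)\big)\psi_{n-1}^+.
\]
Solving this by the integrating factor $R^{in}$ and integrating from $R=\infty$ selects the unique solution with no $R^{-in}$ component, i.e.\ the one analytic at $R=\infty$; an induction on $n$ then shows each $\psi_j^+$ is a zero-order symbol with $\sup_{R>0}|(R\partial_R)^k\psi_j^+|<\infty$, analytic at $R=0$ and at $R=\infty$. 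The $n=1$ step is explicit and reproduces the stated $\psi_1^+=\tfrac{15i}{8}+O((1+R^2)^{-1})$.

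The core of the argument is the remainder estimate. Writing $\sigma=\sigma_{j_0}+\rho_{j_0}$ with $\sigma_{j_0}:=\sum_{j=0}^{j_0}q^{-j}\psi_j^+$, the construction of the transport recursion makes the residual obtained by substituting $\sigma_{j_0}$ of size $O(q^{-j_0-1})$, and $\rho_{j_0}$ then solves the same equation with that residual as forcing and with the free modes $1$ and $e^{-2iq}$ of the leading part. I would recast this as a Volterra equation in $R$ run inward from $R=\infty$; the relevant kernel has the schematic form $\big(e^{2i\xi^{1/2}(s-R)}-1\big)\big/(2i\xi^{1/2})\cdot V(s)$, hence is bounded by $\min(s-R,\xi^{-1/2})$, and since $|V(s)|\lesssim s^{-2}$ one checks that $\int_R^\infty\min(s-R,\xi^{-1/2})|V(s)|\,ds\lesssim1$ exactly when $R^2\xi\gtrsim1$ (and is small once $R^2\xi$ is large), so the iteration converges there, possibly after splitting $[R,\infty)$ into boundedly many subintervals. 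Differentiating the integral equation in $R\partial_R$ and $q\partial_q$ and re-running the same estimates gives the full symbol bounds $\sup_{R>0}|(R\partial_R)^\alpha(q\partial_q)^\beta\rho_{j_0}|\lesssim q^{-j_0-1}$, and uniqueness of asymptotic expansions identifies the coefficients produced by the iteration with the $\psi_j^+$ of the recursion.

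The main obstacle is this last step in the range $\xi^{-1/2}\lesssim R\lesssim1$, where the singular part $\tfrac{15}{4R^2}$ of $V$ dominates and is not integrable at the origin, so a Volterra scheme built on $e^{\pm iR\xi^{1/2}}$ alone is inadequate. The clean remedy is to use as comparison solutions the exact Bessel solutions $R^{1/2}J_2(R\xi^{1/2})$ and $R^{1/2}H^{(1)}_2(R\xi^{1/2})$ of $-\partial_R^2+\tfrac{15}{4R^2}-\xi$ (note $\tfrac{15}{4}=2^2-\tfrac14$): for these the remaining potential is the bounded, integrable $-\tfrac{24}{(1+R^2)^2}$, while the standard large-argument asymptotic expansions of $J_2,H^{(1)}_2$ supply precisely the symbol structure in $q=R\xi^{1/2}$ claimed for $\sigma$, and the Volterra iteration against this modified Green's function converges uniformly for $R^2\xi\gtrsim1$. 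Combining its output with the region $R\gtrsim1$, handled by a crude Volterra iteration from $R=\infty$ using the full $V$, completes the proof.
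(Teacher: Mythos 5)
Your route is essentially the paper's: conjugate by $\xi^{-1/4}e^{iR\xi^{1/2}}$, expand $\sigma$ in negative powers of $q=R\xi^{1/2}$ with $R$-dependent coefficients, match powers to get a recursion, and close with a Volterra estimate for the remainder. The paper works instead with $f_j(R):=R^{-j}\psi_j^+(R)$, which collapses the whole hierarchy to $2if_j'=\calL f_{j-1}$, $f_0\equiv1$, solved explicitly as $f_j=\tfrac{i}{2}f_{j-1}'+\tfrac{i}{2}\int_R^\infty V f_{j-1}\,dR'$; holomorphy in $\Compl\setminus[-i,i]$ and the symbol bounds $|(R\partial_R)^k f_j|\lesssim R^{-j}$ then follow by induction, and the remainder estimate is declared identical to the one in \cite{KST1} and omitted. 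What your proposal adds is precisely that omitted piece: a Volterra iteration for the remainder, with the Bessel comparison solutions $R^{1/2}J_2(R\xi^{1/2})$, $R^{1/2}H^{(1)}_2(R\xi^{1/2})$ absorbing the nonintegrable $15/(4R^2)$ part in the regime $\xi^{-1/2}\lesssim R\lesssim 1$. That is a reasonable remedy and more detail than the paper itself supplies.

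However, both your transport equation and the recursion you derive from it carry an $i$-error, and the two slips do not cancel. The conjugated equation $-\sigma_{RR}-2i\xi^{1/2}\sigma_R+V\sigma=0$ becomes, in the independent variables $(q,R)$,
\[
2i\,\partial_q\sigma+\frac{2iR}{q}\,\partial_R\sigma+\partial_q^2\sigma+\frac{2R}{q}\,\partial_q\partial_R\sigma+\frac{R^2}{q^2}\,\partial_R^2\sigma-\frac{W(R)}{q^2}\,\sigma=0,
\]
so your $(2R/q)\partial_R\sigma$ term is missing an $i$; matching the $q^{-n-1}$ coefficient then gives
\[
2iR\,\partial_R\psi_n^+-2in\,\psi_n^+=\big(W-R^2\partial_R^2+2R(n-1)\partial_R-n(n-1)\big)\psi_{n-1}^+,
\]
not $2R\,\partial_R\psi_n^++2in\,\psi_n^+=\cdots$. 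This is not merely cosmetic: the corrected recursion has the growing homogeneous solution $R^n$ (integrating factor $R^{-n}$, consistent with $\psi_n^+=R^nf_n$), so the condition of no growth at $R=\infty$ picks out a unique $\psi_n^+$, whereas your version has the purely oscillatory homogeneous solution $R^{-in}$ and that selection criterion becomes murky. As a check, at $n=1$ the corrected recursion (or the paper's $f_1=\tfrac{i}{2}\int_R^\infty V$) gives $\psi_1^+(\infty)=\tfrac{15i}{8}$, matching the statement, while yours produces $-\tfrac{15i}{8}$. Once these signs are repaired the rest of your argument goes through.
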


\begin{proof}
With the notation
\[
\sigma(q,R) =  \xi^{\frac14}\psi^+(R,\xi)  e^{-iR \xi^\frac12}
\]
we need to solve the conjugated equation
\begin{equation}
\left(-\partial_R^2 - 2 i \xi^\frac12 \partial_R + \frac{15}{4R^2} -
  \frac{24}{(1+R^2)^2}\right) \sigma(R\xi^{\frac12}, R) = 0
\label{conjug}\end{equation} We look for a formal power series
solving this equation, i.e.,
\begin{equation}
\sigma(q,R)=\sum_{j=0}^\infty \xi^{-\frac{j}2} f_j(R) \label{formal}
\end{equation}
 This yields a recurrence relation for the $f_j$'s,
\[
2i f_j'(R) = \left(-\frac{d^2}{dR^2} + \frac{15}{4R^2} -
  \frac{24}{(1+R^2)^2}\right) f_{j-1}(R), \qquad f_0 = 1
\]
which is solved by
\[
f_j(R) = \frac{i}{2}  f_{j-1}'(R) + \frac{i}{2} \int_{R}^\infty
\left( \frac{15}{4R'^2} -
  \frac{24}{(1+R'^2)^2}\right) f_{j-1}(R')\, dR'
\]
Extending this into the complex domain, it is easy to see that the
functions $f_j$ are holomorphic in $\Compl \setminus [-i,i]$. They
are also holomorphic at $\infty$, and the leading term in the Taylor
series at $\infty$ is $R^{-j}$. At~$0$ one has the  estimate
\[
|(R \partial_R)^k f_j(R)| \leq c_{jk}\, R^{-j} \qquad \forall R>0
\]
which is easy to establish inductively. The functions
\[
\psi_j^+(R) := R^j f_j(R)
\]
now satisfy the desired bounds due to the bounds above on $f_j$. The
remainder of the proof is the same as in our wave-map
paper~\cite{KST1} and we skip it.
\end{proof}

\subsection{The spectral measure}

We now describe the spectral measure by means of \eqref{mw}. This
requires relating the functions $\tilphi$, $\tiltheta$ and
$\psi^\pm$. By examining the asymptotics at $R = 0$ we see that
\begin{equation}
W(\tiltheta,\tilphi) = 1\label{phitheta}\end{equation} Also by
examining the asymptotics as $R \to \infty$ we obtain
\begin{equation}
W(\psi^+,\psi^-) = -2 i \label{psipsi}\end{equation}

\begin{lemma} \label{lem:spec_meas}
a) We have
\begin{equation}\label{eq:phiapsi}
\tilphi(R,\xi) =  a(\xi) \psi^+(R,\xi) + \overline{ a(\xi)
\psi^+(R,\xi)}
\end{equation}
where $a$ is smooth, always nonzero, and has size
\[
|a(\xi)| \asymp \left\{ \begin{array}{cc} 1  & \text{\ if\ \ }\xi
\ll 1 \\ \xi^{-1} & \text{\ if\ \ } \xi \gtrsim 1
\end{array}\right.
\]
Moreover, it satisfies the symbol type bounds
\[
| (\xi \partial_\xi)^k a(\xi) | \leq c_k |a(\xi)|\quad
\forall\;\xi>0
\]

b) The absolutely continuous part of the spectral measure
$\mu(d\xi)$ has density $\rho(\xi)$ which satisfies
\[
\rho(\xi)\asymp \left\{ \begin{array}{cc}
   1 & \text{\ if\ \ }\xi \ll 1 \\ \xi^2 & \text{\ if\ \ } \xi \gtrsim 1
\end{array}\right.
\]
with symbol type estimates on the derivatives.
\end{lemma}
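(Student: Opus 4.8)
The plan is to settle the linear-algebraic content of part (a) first, then reduce the size bound on $a(\xi)$ to a turning-point comparison, and finally read off part (b) from a Wronskian identity.

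\emph{Representation and basic properties of $a$.} Fix $\xi>0$. By Theorem~\ref{thm:GZ}(b) the pair $\psi^+(\cdot,\xi)$, $\psi^-(\cdot,\xi)=\overline{\psi^+(\cdot,\xi)}$ is a fundamental system of $\cL-\xi$, so I would write $\phi(\cdot,\xi)=a(\xi)\psi^+(\cdot,\xi)+b(\xi)\psi^-(\cdot,\xi)$. Since $\phi(\cdot,\xi)$ is real-valued (Theorem~\ref{thm:GZ}(a)), conjugating and using linear independence of $\psi^\pm$ forces $b=\bar a$, which is exactly \eqref{eq:phiapsi}. Pairing with $\psi^-$ and invoking \eqref{psipsi}, $W(\psi^+,\psi^-)=-2i$, yields the $R$-independent identity $a(\xi)=\tfrac i2\,W\big(\phi(\cdot,\xi),\psi^-(\cdot,\xi)\big)$; since $\psi^\pm(\cdot,\xi)$ and $\phi(\cdot,\xi)$ depend analytically (hence smoothly) on $\xi>0$ by Propositions~\ref{ppsipsi}, \ref{pphitheta} and Theorem~\ref{thm:GZ}, $a$ is smooth on $(0,\infty)$. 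It is nowhere vanishing: if $a(\xi_0)=0$ then also $b(\xi_0)=\bar a(\xi_0)=0$, and \eqref{eq:phiapsi} would give $\phi(\cdot,\xi_0)\equiv0$, contradicting the normalization $\phi(R,\xi_0)\sim R^{5/2}$ from \eqref{eq:phitheta}.

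\emph{Size of $a$ --- the main step.} To compute the constant $W(\phi,\psi^-)$ one must join the two regimes where $\phi$ and $\psi^+$ are controlled: Proposition~\ref{ppsipsi} describes $\psi^+(R,\xi)=\xi^{-1/4}e^{iR\sqrt\xi}(1+O((R\sqrt\xi)^{-1}))$ for $R^2\xi\gtrsim1$, while Proposition~\ref{pphitheta} describes $\phi$ for $R^2\xi\lesssim1$, so I would propagate $\phi(\cdot,\xi)$ through the classical turning point $R\asymp\xi^{-1/2}$. For $\xi\gtrsim1$ the turning point sits at $R_*\asymp\xi^{-1/2}\ll1$, and on $R\lesssim1$ the operator $\cL-\xi$ differs from the exactly solvable $-\pr_R^2+\tfrac{15}{4R^2}-(\xi+24)$ only by the bounded $O(R^2)$ remainder of $-\tfrac{24}{(1+R^2)^2}+24$; variation of parameters on $[0,1]$ then identifies $\phi(\cdot,\xi)$, up to the normalization constant fixed by $\phi\sim R^{5/2}$, with a bounded perturbation of $\sqrt R\,J_2(\sqrt{\xi+24}\,R)$, which forces that constant to be $\asymp\xi^{-1}$ and $|\phi(R,\xi)|\asymp\xi^{-5/4}$ throughout the oscillatory range $\xi^{-1/2}\ll R\lesssim1$; propagating out across the integrable tail $-\tfrac{24}{(1+R^2)^2}$ preserves this amplitude, and matching against $\psi^\pm$ (amplitude $\xi^{-1/4}$) on $R^2\xi\gtrsim1$ gives $|a(\xi)|\asymp\xi^{-5/4}/\xi^{-1/4}=\xi^{-1}$. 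For $\xi\ll1$ the turning point is at $R_*\asymp\xi^{-1/2}\gg1$; on $R\lesssim\xi^{-1/2}$ Proposition~\ref{pphitheta} --- in particular the lower bound $|\tilde\phi_1(u)|\gtrsim u$ for $u\gg1$ together with \eqref{eq:phiR2} --- shows $\phi$ is the zero mode $\phi_0$ plus a correction of size $\asymp\xi R^{5/2}$, so $|\phi(R_*,\xi)|\asymp\xi^{-1/4}$; comparing now with the Bessel equation $-\pr_R^2+\tfrac{15}{4R^2}=\xi$, whose solution $\sqrt R\,J_2(\sqrt\xi\,R)$ carries the classical turning-point transition from $\tfrac\xi8 R^{5/2}$ to $\sqrt{2/\pi}\,\xi^{-1/4}\cos(\sqrt\xi R-\tfrac{5\pi}4)$, and again absorbing the integrable tail, one gets that $\phi(R,\xi)$ has oscillatory amplitude $\asymp\xi^{-1/4}$ as $R\to\infty$, whence $|a(\xi)|\asymp1$. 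The symbol bounds $|(\xi\pr_\xi)^k a(\xi)|\lesssim_k|a(\xi)|$ drop out of the same comparisons, since $\phi_0$, the Bessel functions evaluated at $\sqrt\xi R$, and the error terms in Propositions~\ref{ppsipsi}, \ref{pphitheta} are classical symbols in $\xi$ in the relevant regimes.

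\emph{The spectral density.} Writing $\theta(\cdot,\xi)=p(\xi)\psi^+(\cdot,\xi)+\bar p(\xi)\psi^-(\cdot,\xi)$ (real), one has $W(\theta,\psi^+)=2i\bar p$ and $W(\psi^+,\phi)=-2i\bar a$, so \eqref{mw} gives $\tilm(\xi)=W(\theta,\psi^+)/W(\psi^+,\phi)=-\bar p/\bar a$. The normalization $W(\theta,\phi)=1$ from \eqref{phitheta} unwinds to $\Im(p\bar a)=\tfrac14$, hence $\Im\tilm(\xi)=\Im(p\bar a)/|a(\xi)|^2=\tfrac1{4|a(\xi)|^2}$, and \eqref{sm} yields $\rho(\xi)=\tfrac1\pi\Im\tilm(\xi+i0)=\tfrac1{4\pi|a(\xi)|^2}$. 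Part (b), including the symbol-type derivative estimates, is then immediate from part (a).

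The one genuinely delicate point is the turning-point matching in the second step: the validity windows of Propositions~\ref{pphitheta} ($R\lesssim\xi^{-1/2}$) and \ref{ppsipsi} ($R\gtrsim\xi^{-1/2}$) barely overlap, so $\phi(\cdot,\xi)$ has to be carried through $R\asymp\xi^{-1/2}$ by explicit Bessel (equivalently Airy) comparison, combined with a Volterra/variation-of-parameters argument that absorbs the bounded part of the potential near $R=0$ and its integrable tail near $R=\infty$, and this must be done with constants and symbol bounds uniform in $\xi$, separately in the ranges $\xi\ll1$ and $\xi\gtrsim1$. Everything else --- the representation \eqref{eq:phiapsi}, the identity $\rho=1/(4\pi|a|^2)$, and the transfer of estimates from $a$ to $\rho$ --- is routine.
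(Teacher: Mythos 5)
The structure of your argument --- reality plus $W(\psi^+,\psi^-)=-2i$ gives the representation \eqref{eq:phiapsi}, the normalization $\phi\sim R^{5/2}$ gives non-vanishing, and $\rho$ is read off from the Weyl function via Wronskian bookkeeping --- coincides with the paper's, and your derivation of part (b) is essentially the paper's (you get $\rho=(4\pi|a|^2)^{-1}$; the paper quotes $\pi^{-1}|a|^{-2}$ from \cite{KST1}, a harmless constant discrepancy since only $\asymp$ is claimed). The substantive difference lies in how you pin down $|a(\xi)|$. The paper obtains the \emph{upper} bound as you do, evaluating $a(\xi)=-\tfrac i2 W(\phi,\psi^-)$ where the expansions of Propositions~\ref{pphitheta} and~\ref{ppsipsi} overlap, but for the \emph{lower} bound it avoids your turning-point analysis entirely: since $\partial_R\phi=2\Re\big(a\,\partial_R\psi^+\big)$ one has the elementary inequality $|a(\xi)|\ge|\partial_R\phi(R,\xi)|/\big(2|\partial_R\psi^+(R,\xi)|\big)$ for every $R$, and evaluating at $R=\delta\xi^{-1/2}$ using the two-sided bounds \eqref{eq:phiR2} on $\partial_R\phi$ together with $|\partial_R\psi^+|\lesssim\xi^{1/4}$ gives $|a|\gtrsim 1$ for $\xi\ll1$ and $|a|\gtrsim\xi^{-1}$ for $\xi\gtrsim1$ in two lines. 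This replaces exactly the step you flag as ``genuinely delicate'': propagating $\phi$ through $R\asymp\xi^{-1/2}$ by a $\sqrt R\,J_2$ comparison plus a Volterra argument, with constants and symbol bounds uniform in $\xi$. Your route is self-contained and the amplitude counts do come out right ($\xi^{-5/4}$ vs.\ $\xi^{-1/4}$ for $\xi\gtrsim1$, and $\xi^{-1/4}$ vs.\ $\xi^{-1/4}$ for $\xi\ll1$), but it effectively re-derives a quantitative turning-point version of Propositions~\ref{pphitheta}--\ref{ppsipsi}, which is substantially more work than the problem requires and which you have not carried out in detail; the lower-bound inequality buys the same conclusion from facts the paper already has in hand.
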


\begin{proof}
a) Since $\tilphi$ is real-valued, due to \eqref{psipsi}, the
relation~\eqref{eq:phiapsi} above holds with
\[
a(\xi) = -\frac{i}{2} W(\tilphi(\cdot,\xi),{\psi^-}(\cdot,\xi))
\]
We evaluate the Wronskian in the region where both the
$\psi^+(R,\xi)$ and $\tilphi(R,\xi)$ asymptotics are useful, i.e.,
where $R^2 \xi \approx 1$. The bounds from above on $a$ and its
derivatives thus follow from Propositions~\ref{pphitheta}
and~\ref{ppsipsi}.

For the bound from below on $a$ we use that
\[
|a(\xi)| \geq \frac{| \partial_R\tilphi(R,\xi)|}{2|\partial_R
\psi^+(R,\xi)|}
\]
which was obtained in~\cite{KST1}.  We use this relation for $R =
\delta \xi^{-\frac12}$ with a small constant $\delta$. Then by
Proposition~\ref{pphitheta} we have
\[
|\partial_R \tilphi(R,\xi)| \gtrsim \left\{ \begin{array}{cc}
   R^{-\frac12} & \xi \ll 1 \cr  \cr R^{\frac32} &  \xi \gtrsim 1
\end{array}\right.
\]
while by  Proposition~\ref{ppsipsi}
\[
{|\partial_R \psi^+(R,\xi)|} \lesssim \xi^{\frac14}
\]
This give the desired bound from below on $a$.

b) In \cite{KST1} it was shown that
\[
\rho(\xi)
 = \frac{1}\pi |a(\xi)|^{-2}
\]
The bounds on $\rho(\xi)$ now follow from part a).
\end{proof}

 \section{The transference identity}
 \label{sec:transference}

 We now write the radiation part $\tilde{\epsilon}$
  in terms of the
 generalized Fourier basis $\phi(R, \xi)$ from Theorem~\ref{thm:GZ}, i.e.,
 \begin{equation}\nonumber
    \tileps(\tau, R)=x_0(\tau) \phi_0(R) + \int_{0}^{\infty}  x(\tau,\xi)
   \phi(R,\xi)\rho(\xi)\,d\xi
 \end{equation}
 As in \cite{KST1}, \cite{KST2}  we define the
 error operator $\calK$ by
 \begin{equation}\label{eq:transfer}
 \widehat{R \partial_R u} = - 2 \xi \partial_\xi \hat u + \calK \hat u
 \end{equation}
 where the hat denotes the ``distorted Fourier transform"
  and the operator $-2\xi \partial_\xi$
acts only on the continuous part of the spectrum.  In view of
Remark~\ref{rem:vector} we obtain a matrix representation for
$\calK$, namely
\[
\calK = \left(\begin{array}{cc} \calK_{ee} & \calK_{ec} \cr
\calK_{ce} & \calK_{cc} \end{array} \right)
\]
Here    `c' and `e' stand for ``continuous" and ``eigenvalue",
respectively.  Using the expressions for the direct and inverse
Fourier transform in Theorem~\ref{thm:GZ} we obtain
\begin{align}
\calK_{ee}& =  \Bla R\partial_{R} \go(R), \go(R)\Bra_{L^2_R}\nn
\\
 \calK_{ec} f & =  \Bla \int_{0}^{\infty}f(\xi) R\partial_{R} \tilphi(R,
 \xi)\rho(\xi)\,d\xi\,,\, \go(R) \Bra_{L^2_R}\nn
\\
 \calK_{ce} (\eta)& = \Bla R\partial_{R} \go(R),  \tilphi(R,
 \eta) \Bra_{L^2_R}\nn
\\
 \calK_{cc} f(\eta) &=  \Bla
 \int_{0}^{\infty}f(\xi) R\partial_{R} \tilphi(R,
 \xi)\rho(\xi)\,d\xi\,,\, \tilphi(R, \eta)\Bra_{L^2_R}\nn\\
 & \quad + \Bla
 \int_{0}^{\infty}2\xi\partial_{\xi} f(\xi) \tilphi(R,
 \xi)\rho(\xi)\,d\xi\,,\, \tilphi(R, \eta)\Bra_{L^2_R}\label{eq:Kcc}
\end{align}
Integrating by parts with respect to $R$ in the first two relations
we obtain
\[
\calK_{ee} = -\frac12\|\phi_0\|_2^2=-\frac{1}{12}, \qquad \calK_{ec}
f = - \int_0^\infty f(\xi) K_e(\xi) \rho(\xi) d\xi, \qquad
\calK_{ce} (\eta) = K_e(\eta)
\]
where
\[
K_e(\eta) =  \Bla R\partial_{R} \go(R),  \tilphi(R,
 \eta) \Bra_{L^2_R}
\]
Integrating by parts with respect to $\xi$ in \eqref{eq:Kcc} yields
 \begin{equation}\label{calk}\begin{split}
     \calK_{cc} f(\eta) &=  \Bla
     \int_{0}^{\infty}f(\xi)[R\partial_{R}-2\xi\partial_{\xi}]
\tilphi(R,\xi)\rho(\xi)\,d\xi\,,\,
     \tilphi(R, \eta)\Bra_{L^2_R} \\
&\quad - 2 \left(1+ \frac{\eta
         \rho'(\eta)}{\rho(\eta)}\right)  f(\eta)
   \end{split}
\end{equation}
 where the scalar product is to be interpreted in the principal value
 sense with $f\in C_0^\infty((0,\infty))$.

 In this section, we study the boundedness properties of the operator
 $\calK$. We begin with a description of the function $K_e$
and of the kernel $K_0(\eta,\xi)$ of $\calK_{cc}$.

 \begin{theorem}\label{tp}
a)   The operator $\calK_{cc}$ can be written as
   \begin{equation}\label{eq:kern}
     \calK_{cc} = -\Big(\frac{3}{2} + \frac{\eta \rho'(\eta)}{\rho(\eta)}\Big)\delta(\xi-\eta) + \calK_0
   \end{equation}
   where the operator $\calK_0$ has a kernel $K_0(\eta, \xi)$ of the
   form\footnote{The kernel below is interpreted in the principal
     value sense}
   \begin{equation} \label{ketaxi} K_0(\eta,
     \xi)=\frac{\rho(\xi)}{\eta-\xi} F(\xi,\eta)
   \end{equation}
   with a symmetric function $F(\xi,\eta)$ of class $C^2$ in
   $(0,\infty) \times (0,\infty)$ and continuous on $[0,\infty)^2$. Moreover, it satisfying the bounds
   \[\begin{split}
   | F(\xi,\eta)| &\lesssim \left\{ \begin{array}{cc} \xi+\eta &
       \xi+\eta \leq 1 \cr (\xi+\eta)^{-\frac52} (1+|\xi^\frac12
       -\eta^\frac12|)^{-N} & \xi+\eta \geq 1
     \end{array} \right.\\
   | \partial_{\xi} F(\xi,\eta)|+| \partial_{\eta} F(\xi,\eta)| &\lesssim \left\{
     \begin{array}{cc} 1 & \xi+\eta \leq 1 \cr (\xi+\eta)^{-3}
       (1+|\xi^\frac12 -\eta^\frac12|)^{-N} & \xi+\eta \geq 1
     \end{array} \right.\\
  \sup_{j+k=2} | \partial^j_{\xi}\partial^k_{\eta} F(\xi,\eta)| &\lesssim \left\{
     \begin{array}{cc} (\xi+\eta)^{-1} & \xi+\eta \leq 1 \cr
       (\xi+\eta)^{-\frac72} (1+|\xi^\frac12 -\eta^\frac12|)^{-N} &
       \xi+\eta \geq 1
     \end{array} \right.
     \end{split}
   \]
   where $N$ is an arbitrary large integer.

b) The function $K_e$ and $K_e'$ are bounded, continuous,  and
rapidly decaying at infinity.
\end{theorem}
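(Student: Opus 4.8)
The plan is to follow the analysis of the corresponding error operator in \cite{KST1}; the one genuinely new ingredient is an explicit commutator identity which produces a source term that is simultaneously regular at $R=0$ and rapidly decaying at $R=\infty$, and it is this term that dictates the precise exponents in the statement. \textbf{Step 1 (the commutator identity).} Write $\cL=-\partial_R^2+V$ with $V(R)=\frac{15}{4R^2}-\frac{24}{(1+R^2)^2}$, and let $\Lambda:=R\partial_R-2\xi\partial_\xi$ be the generator of the parabolic scaling $R\mapsto R/\mu$, $\xi\mapsto\mu^2\xi$. One computes $[\cL,R\partial_R]=2\cL-2V-RV'$; combining this with $\cL\phi(\cdot,\xi)=\xi\phi(\cdot,\xi)$ and $(\cL-\xi)(\xi\partial_\xi\phi)=\xi\phi$ yields
\begin{equation}
(\cL-\xi)\,\Lambda\phi(\cdot,\xi)=-w(R)\,\phi(\cdot,\xi),\qquad w(R):=2V(R)+RV'(R)=\frac{48(R^2-1)}{(1+R^2)^3}.
\label{plan:comm}
\end{equation}
Only the non scale-invariant part of $V$ contributes, so the $R^{-2}$ singularity cancels: $w$ is analytic on $[0,\infty)$ and $w$ together with all its derivatives is $O(\langle R\rangle^{-4})$. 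Applying the same computation to $\phi_0$ (where $\cL\phi_0=0$) gives $\cL(R\partial_R\phi_0)=-w\,\phi_0$.

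\textbf{Step 2 (the kernel formulas).} By \eqref{calk} the kernel of $\calK_{cc}$ equals $\rho(\xi)\langle\Lambda\phi(\cdot,\xi),\phi(\cdot,\eta)\rangle_{L^2_R}-2\big(1+\tfrac{\eta\rho'(\eta)}{\rho(\eta)}\big)\delta(\xi-\eta)$. Pairing \eqref{plan:comm} with $\phi(\cdot,\eta)$, using $\cL\phi(\cdot,\eta)=\eta\phi(\cdot,\eta)$ and the symmetry of $\cL$ — the boundary term at $R=0$ vanishes since $\Lambda\phi$ and $\phi$ are $O(R^{5/2})$, while the oscillatory boundary term at $R=\infty$ is handled by the regularization-in-$R$ of \cite{KST1}, which at the same time extracts the diagonal $\delta$-contribution of the pairing (via $\Lambda\psi^+=\tfrac12\psi^++O(\text{lower order})$ and Plancherel) and combines it with the $\delta$ above to produce the coefficient $-(\tfrac32+\tfrac{\eta\rho'}{\rho})$ in \eqref{eq:kern} — one obtains $(\eta-\xi)\langle\Lambda\phi(\cdot,\xi),\phi(\cdot,\eta)\rangle=-\int_0^\infty w(R)\phi(R,\xi)\phi(R,\eta)\,dR$. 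Thus \eqref{ketaxi} holds with
\begin{equation}
F(\xi,\eta)=-\int_0^\infty w(R)\,\phi(R,\xi)\,\phi(R,\eta)\,dR,
\label{plan:F}
\end{equation}
which is manifestly symmetric, the integral converging absolutely because $|w(R)|\lesssim\langle R\rangle^{-4}$ while $\phi(\cdot,\xi)$ is bounded in $R$ for each $\xi>0$.

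\textbf{Step 3 (the estimates).} The identity $\int_0^\infty w(R)\phi_0(R)^2\,dR=0$, proved by integrating $2V+RV'$ by parts and using $\phi_0''=V\phi_0$, reflects that $\phi_0$ is a genuine $L^2$ eigenfunction; by \eqref{plan:F} it forces $F(0,0)=0$, hence $|F(\xi,\eta)|\lesssim\xi+\eta$ on $\{\xi+\eta\le1\}$, and the asserted bounds for $\partial F$ and $\partial^2 F$ there follow by differentiating \eqref{plan:F} under the integral sign, each derivative of $\phi$ being controlled for $R^2\xi\lesssim1$ by the convergent series of Proposition~\ref{pphitheta}. For $\xi+\eta\gtrsim1$ one splits the $R$-integral at the turning points $R\sim\xi^{-1/2},\eta^{-1/2}$: in the inner region one again uses Proposition~\ref{pphitheta}, and in the oscillatory region one inserts $\phi(R,\xi)=a(\xi)\psi^+(R,\xi)+\overline{a(\xi)\psi^+(R,\xi)}$ with $|a(\xi)|\asymp\xi^{-1}$ and the WKB expansion of Proposition~\ref{ppsipsi}; since the phases $\xi^{1/2}\pm\eta^{1/2}$ are nonstationary, repeated integration by parts against the rapidly decaying $w$ produces the factor $(1+|\xi^{1/2}-\eta^{1/2}|)^{-N}$, and the amplitude $|a(\xi)a(\eta)|(\xi\eta)^{-1/4}$ together with $\int|w|\lesssim1$ supplies the polynomial decay in $\xi+\eta$; continuity of $F$ up to $[0,\infty)^2$ and its $C^2$-regularity on $(0,\infty)^2$ then follow from the local uniformity of these bounds. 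For part b), $K_e(\eta)=\langle R\partial_R\phi_0,\phi(\cdot,\eta)\rangle$ is the distorted transform of $g:=R\partial_R\phi_0$; since $\langle\cL^j h,\phi_0\rangle=\langle h,\cL^j\phi_0\rangle=0$, the relation $\widehat{\cL h}=\eta\hat h$ on the continuous spectrum together with $\cL g=-w\phi_0$ gives, by iteration, $\eta^{k+1}K_e(\eta)=-\langle\cL^k(w\phi_0),\phi(\cdot,\eta)\rangle$, and because $\cL$ preserves the class $R^{5/2}\cdot(\text{analytic in }R^2)$ and improves the decay at infinity by $\langle R\rangle^{-2}$ per step, $\cL^k(w\phi_0)\in L^1$ with norm $\lesssim C_k$; since $\sup_R|\phi(R,\eta)|\lesssim\langle\eta\rangle^C$ this yields $|K_e(\eta)|\lesssim C_k\langle\eta\rangle^C\eta^{-k-1}$ for every $k$, i.e.\ rapid decay, while boundedness and continuity near $\eta=0$ follow since $\langle w\phi_0,\phi(\cdot,\eta)\rangle=O(\eta)$ is analytic in $\eta$, using again $\int w\phi_0^2=0$. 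Differentiating the iterated identity in $\eta$ (and using $|\partial_\eta\phi(R,\eta)|\lesssim\langle R\rangle\,\eta^{-c}$ from Proposition~\ref{ppsipsi} and Lemma~\ref{lem:spec_meas}) gives the same conclusion for $K_e'$.

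\textbf{Main obstacle.} The delicate point is the rigorous passage in Step~2 from the divergent oscillatory pairing $\langle\Lambda\phi(\cdot,\xi),\phi(\cdot,\eta)\rangle$ to the absolutely convergent formula \eqref{plan:F} and the correct diagonal coefficient, which requires the regularization scheme of \cite{KST1} and care with the contribution from $R=\infty$. Once \eqref{plan:comm} and \eqref{plan:F} are in place, the remaining work is the lengthy but routine stationary/nonstationary-phase bookkeeping of Step~3, built on Propositions~\ref{pphitheta}--\ref{ppsipsi} and Lemma~\ref{lem:spec_meas}.
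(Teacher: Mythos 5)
Your Steps 1 and 2 reproduce the paper's argument exactly: the commutator $[\cL,R\partial_R]=2\cL-2V-RV'$ and the resulting representation $F(\xi,\eta)=\int_0^\infty U(R)\phi(R,\xi)\phi(R,\eta)\,dR$ with $U=-w=\frac{48(1-R^2)}{(1+R^2)^3}$ is precisely what the paper uses, and your observation that this kernel is analytic at $R=0$ and decays like $\langle R\rangle^{-4}$ is the key point. Likewise part b) is sound: your iteration $\eta^{k+1}K_e(\eta)=\langle\cL^{k}(U\phi_0),\phi(\cdot,\eta)\rangle$ is equivalent to the paper's identity $K_e(\eta)=F(0,\eta)/\eta$ together with repeated use of the $(\eta-\xi)^{2k}$ commutator representation with $\xi=0$, and your argument that $\cL$ preserves the class $R^{5/2}\cdot(\text{analytic in }R^2)$ while improving $\infty$-decay by $\langle R\rangle^{-2}$ per step is correct. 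For $\xi+\eta\gtrsim1$, the paper gains powers of $(\eta-\xi)$ by iterating the commutator $[\cL,U]$ rather than by inserting the WKB expansion and integrating by parts in $R$ as you do; the two routes are morally equivalent, but the commutator route avoids boundary terms at $R=\infty$ and packages the nonstationary-phase gain together with the decay of the iterated potentials in a single clean identity.

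There is, however, a genuine gap in your Step 3 at low frequencies. You claim that the second-derivative bound $\sup_{j+k=2}|\partial_\xi^j\partial_\eta^k F(\xi,\eta)|\lesssim(\xi+\eta)^{-1}$ for $\xi+\eta\le1$ "follows by differentiating under the integral sign, each derivative of $\phi$ being controlled for $R^2\xi\lesssim1$ by the convergent series of Proposition~\ref{pphitheta}." This is not correct: the integral in the definition of $F$ runs over all $R$, and the regime $R^2\eta\gtrsim1$ contributes. Using the sharp pointwise bounds on $\partial_\xi\phi$ from Proposition~\ref{pphitheta} and Lemma~\ref{lem:spec_meas}, the naive estimate for $0<\xi<\eta\le1$ only gives
\[
|\partial_{\xi\eta}F(\xi,\eta)|\lesssim\eta^{-1}+\xi^{-1/4}\eta^{-3/4},
\]
and the term $\xi^{-1/4}\eta^{-3/4}$ dominates $(\xi+\eta)^{-1}\asymp\eta^{-1}$ whenever $\xi\ll\eta$ (e.g.\ $\xi=\eta^2$ gives $\eta^{-5/4}\gg\eta^{-1}$). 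To close this, one must exploit the oscillation of $\partial_\eta\phi(R,\eta)$ in the regime $R\gtrsim\eta^{-1/2}$ by the same WKB decomposition $\phi=a\psi^++\overline{a\psi^+}$ and an integration by parts in $R$ against the decaying $U$; a parallel argument, starting from the commutator identity $(\eta-\xi)\partial_\xi^2F=2\partial_\xi F-\langle\partial_\xi^2\phi(\cdot,\xi),(2U_R\partial_R+U_{RR})\phi(\cdot,\eta)\rangle$, handles $\partial_\xi^2F$ when $\xi\ll\eta$. The technique is the same one you invoke for $\xi+\eta\gtrsim1$, so this is an omission rather than a wrong idea, but as written the proposal does not establish the stated second-derivative bound in the low-frequency regime. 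A secondary caveat: you defer the extraction of the diagonal $\delta$-coefficient $-(\tfrac32+\eta\rho'/\rho)$ entirely to the regularization of \cite{KST1}; that is plausible since the Plancherel-type computation depends only on the universal normalization $\psi^+\sim\xi^{-1/4}e^{iR\xi^{1/2}}$ and $\rho^{-1}=\pi|a|^2$, but the explicit cancellation against the $-2(1+\eta\rho'/\rho)\delta$ term from \eqref{calk} should be verified rather than asserted.
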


 \begin{proof}
   We first establish the off-diagonal behavior of $\calK_{cc}$, and later
   return to the issue of identifying the $\delta$-measure that sits
   on the diagonal. We begin with \eqref{calk} with $f \in
   C_0^\infty((0,\infty))$. The integral
   \[
   u(R) =
   \int_{0}^{\infty}f(\xi)[R\partial_{R}-2\xi\partial_{\xi}]\tilphi(R,
   \xi)\rho(\xi)\,d\xi
   \]
   behaves like $R^{\frac52}$ at $0$ and is a Schwartz function at
   infinity. The second factor $\tilphi(R,\eta)$ in \eqref{calk} also
   decays like $R^{\frac52}$ at $0$ but at infinity it is only bounded
   with bounded derivatives. Then the following integration by parts
   is justified:
   \[
   \eta \calK_{cc} f(\eta) = \Bla u(R), \cL \tilphi(R, \eta)\Bra_{L^2_R} = \Bla
   \cL u(R), \tilphi(R, \eta)\Bra_{L^2_R}
   \]
   Moreover,
   \[
   \begin{split}
    ( \cL u)(R) =& \int_{0}^{\infty}f(\xi)[\cL,R\partial_{R}] \tilphi(R,
     \xi)\rho(\xi)\,d\xi +
     \int_{0}^{\infty}f(\xi)(R\partial_{R}-2\xi\partial_{\xi}) \xi
     \tilphi(R,\xi)\rho(\xi)\,d\xi \\ = & \int_{0}^{\infty}f(\xi) [\cL,R\partial_{R}]
     \tilphi(R, \xi)\rho(\xi)\,d\xi + \int_{0}^{\infty}\xi
     f(\xi)(R\partial_{R}-2\xi\partial_{\xi})
     \tilphi(R,\xi)\rho(\xi)\,d\xi \\
     & - 2\int_{0}^{\infty}\xi
     f(\xi)\tilphi(R,\xi)\rho(\xi)\,d\xi
   \end{split}
   \]
   with the commutator
   \[
   [\cL,R\partial_{R}] = 2\cL + \frac{48}{(1+R^2)^2} -
   \frac{96R^2}{3(1+R^2)^3} =: 2\cL + U(R)
   \]
    Thus,
    \[
(\cL u)(R) =\int_{0}^{\infty}f(\xi) U(R)
     \tilphi(R, \xi)\rho(\xi)\,d\xi + \int_{0}^{\infty}\xi
     f(\xi)(R\partial_{R}-2\xi\partial_{\xi})
     \tilphi(R,\xi)\rho(\xi)\,d\xi
    \]
   Hence we obtain
   \[
   \eta \calK_{cc} f(\eta) - \calK_{cc} (\xi f)(\eta) = \Bla
   \int_{0}^{\infty}f(\xi) U(R) \tilphi(R, \xi)\rho(\xi)\,d\xi ,
   \tilphi(R, \eta)\Bra_{L^2_R}
   \]
   The double integral on the right-hand side is absolutely
   convergent, therefore we can change the order of integration to
   obtain
   \[
   (\eta -\xi) K_0(\eta,\xi) = \rho(\xi) \Bla U(R) \tilphi(R, \xi),
   \tilphi(R, \eta)\Bra_{L^2_R}
   \]
   This leads to the representation in \eqref{ketaxi} when $\xi\ne\eta$ with
   \[
   F(\xi,\eta) = \Bla U(R) \tilphi(R, \xi), \tilphi(R,
   \eta)\Bra_{L^2_R}
   \]
   It remains to study its size and regularity. First, due
   to our pointwise bound from the previous section,
\[\begin{aligned}
|\phi(R,\xi)| &\less \min(R^{\frac52}\la R\ra^{-4}
(1+R^4\xi),\xi^{-\frac14}) &\forall\;  0\le\xi<1\\
|\phi(R,\xi)| &\less \min(R^{\frac52},\xi^{-\frac54}) &\forall\;
\xi>1
\end{aligned}
\]
Note that these bounds imply that for all $\xi\ge0$,
\[
\la R\ra^{-2} |\phi(R,\xi)|\less \phi_0(R)\less \la R\ra^{-\frac32}
\]
Hence, $|F(\xi,\eta)|\less 1$ for all $0\le\xi,\eta<1$. Moreover,
$F(\xi,\eta)$ is continuous on $[0,\infty)^2$ by dominated
convergence. Finally, using that $|\phi(R,\xi)|\less \xi^{-\frac54}$
when $\xi>1$ implies that
\begin{equation}
  \label{eq:Funif}
  |F(\xi,\eta)|\less \la \xi\ra^{-\frac54}\la \eta\ra^{-\frac54}
  \quad \forall\, \xi,\eta\ge0
\end{equation}
We shall improve on this in a number of ways, but first we consider
derivatives. By the previous section,
\begin{align*}
|\partial_\xi \tilphi(R, \xi)|
   &\less \min(R^{\frac92}, R\xi^{-\frac74}) \quad \forall\;
   \xi>1
   \\
|\partial_\xi \tilphi(R, \xi)|
   &\less \min(R^{\frac52},R\xi^{-\frac34}) \quad \forall\;
   0<\xi<1
\end{align*}
Consequently, if $0<\xi,\eta<1$, then
\begin{align*}
  |\partial_\xi F(\xi,\eta)| &\less \int_0^\infty \la R\ra^{-4}
  \min(R^{\frac52},R\xi^{-\frac34}) \min(\la
  R\ra^{-\frac32}(1+R^4\eta),\eta^{-\frac14})\, dR \\
  &\less \int_0^{\eta^{-\frac12}} \la R\ra^{-3} (1+R^4\eta)\, dR +
  \int_{\eta^{-\frac12}}^\infty \la R\ra^{-\frac32}
  \eta^{-\frac14}\, dR \less 1
\end{align*}
whereas if $0<\xi<1<\eta$, then
\[
|\partial_\xi F(\xi,\eta)|\less \int_0^\infty \la R\ra^{-\frac32}
\eta^{-\frac54}\, dR \less \eta^{-\frac54}
\]
If $0<\eta<1<\xi$, then
\begin{align*}
  |\partial_\xi F(\xi,\eta)| &\less \int_0^\infty \la R\ra^{-4}
  \min(R^{\frac92},R\xi^{-\frac74}) \min(\la
  R\ra^{-\frac32}(1+R^4\eta),\eta^{-\frac14})\, dR \\
  &\less \xi^{-\frac74} \int_0^{\eta^{-\frac12}} \la R\ra^{-\frac92} (1+R^4\eta)\, dR +
  \int_{\eta^{-\frac12}}^\infty \la R\ra^{-4} R \xi^{-\frac74}
  \eta^{-\frac14}\, dR \less \xi^{-\frac74}
\end{align*}
Finally, for $1<\xi,\eta$,
\[
|\partial_\xi F(\xi,\eta)| \less \int_0^\infty \la R\ra^{-4}
  R\xi^{-\frac74} \eta^{-\frac54}\, dR \less \xi^{-\frac74}
  \eta^{-\frac54}
\]
To summarize,
\begin{equation}
  \label{eq:Fderunif}
  |\partial_\xi F(\xi,\eta)|\less \la \xi\ra^{-\frac74}\la
  \eta\ra^{-\frac54}, \quad |\partial_\eta F(\xi,\eta)|\less \la \xi\ra^{-\frac54}\la
  \eta\ra^{-\frac74}
  \qquad \forall\, \xi,\eta\ge0
\end{equation}

For the second derivatives we use that
\begin{align*}
|\partial^2_\xi \tilphi(R, \xi)|
   &\less \min(R^{\frac{13}{2}}, R^2\xi^{-\frac94}) \quad \forall\;
   \xi>1
   \\
|\partial^2_\xi \tilphi(R, \xi)|
   &\less \min(R^{\frac92},R^2\xi^{-\frac54}) \quad \forall\;
   0<\xi<1
\end{align*}
which imply the bounds
   we always have the estimates
   \begin{equation}\label{eq:F2derunif}\begin{split}
 |\partial^2_{\xi\eta} F(\xi,\eta)| &\less
\xi^{-\frac74}\eta^{-\frac74} \qquad \forall\;
 \xi>1,\,\eta>1\\
|\partial_\xi^2 F(\xi,\eta)| &\less \xi^{-\frac94}\eta^{-\frac54} \qquad \forall\;\xi>1,\,\eta>1\\
|\partial_\eta^2 F(\xi,\eta)| &\less \xi^{-\frac54}\eta^{-\frac94}
\qquad \forall\;\xi>1,\,\eta>1
\end{split}
\end{equation}
The bounds \eqref{eq:Funif}, \eqref{eq:Fderunif}, and
\eqref{eq:F2derunif}   are only useful when $\xi$ and $\eta$ are
very close. To improve on them, we consider two
   cases:

   {\bf Case 1: $1 \lesssim \xi+\eta $.}
   To capture the cancellations when $\xi$ and $\eta$ are separated we
   resort to another integration by parts,
   \begin{equation}
   \label{eq:Fdef}
   \eta F(\xi,\eta) = \Bla U(R) \tilphi(R, \xi), \cL \tilphi(R,
   \eta)\Bra = \Bla [\cL, U(R)] \tilphi(R, \xi), \tilphi(R, \eta)\Bra +
   \xi F(\xi,\eta)
   \end{equation}
   Hence, evaluating the commutator,
   \begin{equation} \label{Kk1} (\eta -\xi) F(\xi,\eta) = -\Bla (2
   U_R(R)
     \partial_R + U_{RR}(R)) \tilphi(R, \xi), \tilphi(R, \eta)\Bra
   \end{equation}
   Since $U_R(0)=0$ it follows that $ (2 U_R(R) \partial_R + U_{RR}(R))
   \tilphi(R, \xi)$ vanishes at the same rate as $\tilphi(R, \xi)$ at $R=0$.
   Then we can repeat the argument above to obtain
   \[
   (\eta -\xi)^2 F(\xi,\eta) = -\Bla [\cL,2 U_R \partial_R + U_{RR}]
   \tilphi(R, \xi), \tilphi(R, \eta)\Bra
   \]
   The second commutator has the form, with $V(R):= -
   24(1+R^2)^{-2}$,
   \[ [\cL,2 U_R \partial_R + U_{RR}] = 4 U_{RR} \cL - 4U_{RRR}
   \partial_R -U_{RRRR}
    - 2 U_R V_R -4U_{RR} V
   \]
   Since $V(R),U(R)$ are even,  this leads to
   \[
   (\eta -\xi)^2 F(\xi,\eta) = \Bla ( U^{odd}(R) \partial_R + U^{even}(R) +
   \xi U^{even}(R) ) \tilphi(R, \xi), \tilphi(R, \eta)\Bra
   \]
   where by $ U^{odd}$, respectively $U^{even}$, we have generically
   denoted odd, respectively even, nonsingular rational functions with
   good decay at infinity.  Inductively, one now verifies the identity
        \begin{equation} \begin{split}
     &(\eta -\xi)^{2k} F(\xi,\eta) =  \Bla \Big( \sum_{j=0}^{k-1} \xi^{j}\, U_{kj}^{odd}(R)\, \partial_R +
     \sum_{\ell=0}^k \xi^\ell U_{k\ell}^{even}(R) \Big) \tilphi(R, \xi), \tilphi(R, \eta)\Bra
     \label{Kk} \\
&\la R\ra|U_{kj}^{odd}(R)| + |U_{k\ell}^{even}(R)| \less \la
R\ra^{-4-2k} \qquad \forall\; j,\ell
     \end{split}\end{equation}
   By means of the pointwise bounds on $\tilphi$  from above
as well as
\[
|\partial_R \phi(R,\xi)|\less \left\{ \begin{array}{cc} \max(\la
R\ra^{-\frac12}, \xi^{\frac14})\le 1 & \text{\ if\ \ } 0\le
\xi \le 1\\
\min(R^{\frac32}, \xi^{-\frac34})  \le \xi^{-\frac34} & \text{\ if\
\ } \xi \ge 1
\end{array}\right.
\] we infer from this that
   \[
   |F(\xi,\eta)| \lesssim \frac{\la\xi\ra^{k -\frac54}
     \la\eta\ra^{-\frac54}} {(\eta -\xi)^{2k}} \qquad\forall \;
     \xi, \;\eta>0
\]
Combining this estimate with \eqref{eq:Funif} yields, for arbitrary
$N$,
\begin{equation}\label{eq:Fabsch} |F(\xi,\eta)|\less (\xi+\eta)^{-\frac52} (1+|\xi^\frac12
       -\eta^\frac12|)^{-N} \text{\ \ provided\ \ } \xi+\eta \gtrsim 1,
   \end{equation}
   as claimed.
   For the derivatives of $F$ we follow a similar procedure. If $\xi$
   and $\eta$ are comparable,  then from~\eqref{eq:Fderunif},
   $|\partial_\eta F(\xi,\eta)|\less \la \xi\ra^{-3}$. We will use
   this bound only when $|\xi^\frac12
       -\eta^\frac12|<1$ which of course implies that
       $\xi\asymp\eta\gtrsim 1$. Thus, we now assume that $|\xi^\frac12
       -\eta^\frac12|\ge 1$ which is the same as $|\xi-\eta|>(\xi+\eta)^{\frac12}$.
   In this case,  we differentiate with respect to $\eta$ in \eqref{Kk}.
   This yields
   \[\begin{split}
   (\eta -\xi)^{2k} \partial_\eta F(\xi,\eta) &=
   \Bla \Big( \sum_{j=0}^{k-1} \xi^{j}\, U_{kj}^{odd}(R)\, \partial_R +
     \sum_{\ell=0}^k \xi^\ell U_{k\ell}^{even}(R) \Big)\tilphi(R, \xi),
   \partial_\eta \tilphi(R, \eta)\Bra\\ &
    - 2k (\eta -\xi)^{2k-1}
   F(\xi,\eta)
   \end{split}
   \]
   Using the bound on $F$ from \eqref{eq:Fabsch} as well as the
   usual estimate on~$\pr_\eta \phi(R,\eta)$, leads to
\begin{equation}\label{eq:F'absch}
|F(\xi,\eta)|\less (\xi+\eta)^{-3} (1+|\xi^\frac12
       -\eta^\frac12|)^{-N} \text{\ \ provided\ \ } \xi+\eta \gtrsim 1
   \end{equation}
   The second order derivatives with respect to
   $\xi$ and $\eta$ are treated in an analogous manner.
    We note that it is important here that the decay of
   $U_{kj}^{odd}$ and $U_{k\ell}^{even}$ improves with $k$. This is because
   the optimal second derivative bound for small $\eta$, viz.\ $|\pr_\eta\phi(R,\eta)|\less
   R^{\frac92}$,
   has a sizeable growth in~$R$.

\medskip
   {\bf Case 2: $ \xi,\eta \ll 1$}. First, we note that
   \[ F(0,0) = \la U \phi_0,\phi_0\ra = \Bla
   ([\cL,R\partial_R]-2\cL)\phi_0,\phi_0\Bra = 0
   \]
   Together with the derivative bound \eqref{eq:Fderunif}, this
   implies that
   \[
|F(\xi,\eta)|\less \xi+\eta,
   \]
as claimed.
   To bound the second order derivatives of $F$ we recall the pointwise
   bounds, for $0<\xi<1$,
   \[
   |\partial_\xi \phi(R,\xi)|\less \min(R^{\frac52},R\xi^{-\frac34})
   \]
   If $0 < \xi<\eta <1$, then these bounds imply that
   \begin{equation}\label{eq:Fxieta}\begin{split}
   | \partial_{\xi\eta} F(\xi,\eta)| &\lesssim
   \int_{0}^{\eta^{-\frac12}} \la R\ra^{-4} R^5\, dR +
   \int_{\eta^{-\frac12}}^{\xi^{-\frac12}} \la R\ra^{-4} R^{\frac72} \eta^{-\frac34}
   \,dR
   +\int_{\xi^{-\frac12}}^\infty \la R\ra^{-2} (\xi\eta)^{-\frac34}
   \, dR \\
   & \less \eta^{-1} + \xi^{-\frac14}\eta^{-\frac34}
   \end{split}
   \end{equation}
This bound is only acceptable as long as $\xi$ and $\eta$ are
comparable. Otherwise, if $0<\xi\ll\eta\le 1$, then one needs to
exploit the oscillations of~$\pr_\eta \phi(R,\eta)$
   in the regime $R^2\eta>1$ as provided by Proposition~\ref{ppsipsi}
   and Lemma~\ref{lem:spec_meas}. Thus, write
   \begin{align*}
\pr_\eta \phi(R,\eta) &= \pr_\eta[a(\eta) \psi^+(R,\eta) +
\overline{ a(\eta) \psi^+(R,\eta)}] = 2\Re \pr_\eta \big[
a(\eta)\eta^{-\frac14} e^{iR\eta^{\frac12}}
\sigma(R\eta^\half,R)\big] \\
&= 2\Re \big[ (a(\eta)\eta^{-\frac14})' e^{iR\eta^{\frac12}}
\sigma(R\eta^\half,R)\big] + R \Re \big[ i a(\eta)\eta^{-\frac34}
e^{iR\eta^{\frac12}} \sigma(R\eta^\half,R)\big] \\
&\qquad + R \Re \big[ a(\eta)\eta^{-\frac34} e^{iR\eta^{\frac12}}
\sigma_q (R\eta^\half,R)\big]
   \end{align*}
   Therefore,
\begin{align}
 &\left| \int_{\eta^{-\frac12}}^\infty U(R) \pr_\xi\phi (R,\xi)
\pr_\eta \phi(R,\eta) \,dR \right| \nn
\\&\less \left|
\int_{\eta^{-\frac12}}^\infty U(R) \pr_\xi\phi (R,\xi)
(a(\eta)\eta^{-\frac14})' e^{iR\eta^{\frac12}} \sigma(R\eta^\half,R)
\,dR \right| \label{eq:int1}\\
&  + \left| \int_{\eta^{-\frac12}}^\infty R U(R) \pr_\xi\phi
(R,\xi) a(\eta)\eta^{-\frac54} \sigma(R\eta^\half,R) \pr_R
e^{iR\eta^{\frac12}}
 \,dR \right| \label{eq:int2}\\
 &  + \left|
\int_{\eta^{-\frac12}}^\infty RU(R) \pr_\xi\phi (R,\xi)
a(\eta)\eta^{-\frac54} \sigma_q(R\eta^\half,R) \pr_R
e^{iR\eta^{\frac12}} \,dR \right| \label{eq:int3}
\end{align}
The term on the right-hand side of~\eqref{eq:int1} is bounded by
\[
\int_{\eta^{-\frac12}}^\infty R^{-\frac32} \eta^{-\frac54}  \,dR
\less \eta^{-1}
\]
whereas \eqref{eq:int2} and \eqref{eq:int3} require integrating by
parts. It will suffice to consider the former. Using that
$|\partial_{R\xi} \phi(R,\xi)|\less \min(R^{\frac32},
R\xi^{-\frac14})$ and that $|\pr_q \sigma(q,R)|\less R^{-1}$, we
obtain
\begin{align*}
& \left| \int_{\eta^{-\frac12}}^\infty R U(R) \pr_\xi\phi (R,\xi)
a(\eta)\eta^{-\frac54} \sigma(R\eta^\half,R) \pr_R
e^{iR\eta^{\frac12}}
 \,dR \right| \\
&\less \eta^{-\frac54}|R U(R) \pr_\xi\phi (R,\xi)
 |_{R=\eta^{-\frac12}} \\
& + \eta^{-\frac54}\int_{\eta^{-\frac12}}^\infty \big[\la R\ra^{-4}
|\pr_\xi\phi (R,\xi)| + \la R\ra^{-3} |\pr_{R\xi}\phi
(R,\xi)|\big]\, dR \less \eta^{-1}
\end{align*}
In conclusion, for all $0\le \xi,\eta\le1$,
\[
| \partial_{\xi\eta} F(\xi,\eta)| \less (\xi+\eta) ^{-1}
\]
as desired. Next, consider $\pr_{\xi}^2 F(\xi,\eta)$. The bound
\begin{align*}
  |\pr_{\xi}^2
F(\xi,\eta) | &\less \int_0^\infty \la R\ra^{-4} \min (R^{\frac92},
R^2\xi^{-\frac54}) \la R\ra^{\frac12}\, dR \less \xi^{-1}
\end{align*}
is acceptable as long as $0<\eta\less \xi\le1$. If, on the other
hand, $0<\xi\ll\eta\le1$, then differentiating in \eqref{Kk1} we
   obtain
   \[
   (\eta-\xi) \partial_\xi^2 F(\xi,\eta) = 2 \partial_\xi
   F(\xi,\eta) - \Bla \partial_\xi^2 \tilphi(R, \xi), (2 U_R \partial_R + U_{RR})
    \tilphi(R, \eta)\Bra
   \]
which implies that
\[
   |\partial_\xi^2 F(\xi,\eta)| \less \eta^{-1}\big[ | \partial_\xi
   F(\xi,\eta)|  + |\la \partial_\xi^2 \tilphi(R, \xi),
    U_{RR}
    \tilphi(R, \eta)\ra| + |\la \partial_\xi^2 \tilphi(R, \xi), R^{-1} U_R
   R\partial_R
    \tilphi(R, \eta)\ra| \big]
   \]
The first term in brackets is $\less 1$, the second is bounded by
\[
\int_0^{\eta^{-\half}} \la R\ra^{-6} \la R\ra^{\frac92} \la
R\ra^{-\frac32} (1+R^4\eta)\,dR + \int_{\eta^{-\half}}^\infty R^{-6}
R^{\frac92}  \eta^{-\frac14}\, dR\less 1
\]
whereas the third is the same as the second in the range $0\le R\le
\eta^{-\frac12}$, whereas in the range $R\ge \eta^{-\half}$ we need
to integrate by parts; schematically, this amounts  to
\[
\left| \int_{\eta^{-\frac12}}^\infty \la
R\ra^{-\frac12}\eta^{-\frac14} \pr_R e^{iR\eta^\half}\, dR
\right|\less 1
\]
The full details are essentially the same as in the previous
integration by parts step and we skip them.

Next, we extract the $\delta$ measure that sits on the diagonal of
the
   kernel $K$ from the representation formula~\eqref{calk}, see also~\eqref{eq:kern}.
   To do so, we can restrict
    $\xi,\eta$  to a compact subset of
   $(0,\infty)$.  This is convenient, as we then have the following
   asymptotics of $\tilphi(R, \xi)$ for $R \xi^\frac12 \gg 1$:
   \[ \begin{split}
   \tilphi(R,\xi) &=
   \Re \left[a(\xi)\xi^{-\frac{1}{4}}
     e^{iR\xi^{\frac{1}{2}}} \Big(1+
       \frac{15i}{8R\xi^\frac12}\Big)\right] + O(R^{-2})\\
   (R\partial_R -2\xi \partial_\xi) \tilphi(R,\xi) &= -2\Re \left[ \xi
     \partial_\xi(a(\xi)\xi^{-\frac{1}{4}})
     e^{i R\xi^{\frac{1}{2}}} \Big(1+
       \frac{15i}{8R\xi^\frac12}\Big)\right] + O(R^{-2})
       \end{split}
   \]
   where the $O(\cdot)$ terms depend on the choice of the compact
   subset.
   The $R^{-2}$ terms are integrable so they contribute a bounded
   kernel
    to the inner product in~\eqref{calk}. The same applies to the contribution of a
   bounded $R$ region.  Using the above expansions, we
   conclude that the $\delta$-measure contribution of the inner
   product in~\eqref{calk} can only come from one of the following
   integrals:
   \begin{align}
     &  -\int_0^\infty \int_0^\infty f(\xi) \chi(R) \Re
     \left[ \xi \partial_\xi({a}(\xi)\xi^{-\frac{1}{4}})
       a(\eta)\eta^{-\frac{1}{4}}
       e^{iR(\xi^{\frac{1}{2}}+\eta^\frac12)}\Big(1+
       \frac{15i}{8R\xi^\frac12}\Big)\Big(1+
       \frac{15i}{8R\eta^\frac12}\Big)\right]\rho(\xi)\, d\xi dR \label{eq:I1}\\
       &  -\frac12\int_0^\infty \int_0^\infty f(\xi) \chi(R)\,
      \xi \partial_\xi({a}(\xi)\xi^{-\frac{1}{4}})
       \bar{a}(\eta)\eta^{-\frac{1}{4}}
       e^{iR(\xi^{\frac{1}{2}}-\eta^\frac12)}\Big(1+
       \frac{15i}{8R\xi^\frac12}\Big)\Big(1-
       \frac{15i}{8R\eta^\frac12}\Big)\;\rho(\xi)\, d\xi dR \label{eq:I2}\\
     & -\frac12\int_0^\infty \int_0^\infty f(\xi) \chi(R)\,
     \xi \partial_\xi(\bar{a}(\xi)\xi^{-\frac{1}{4}})
       a(\eta)\eta^{-\frac{1}{4}}
       e^{-iR(\xi^{\frac{1}{2}}-\eta^\frac12)}\Big(1-
       \frac{15i}{8R\xi^\frac12}\Big)\Big(1+
       \frac{15i}{8R\eta^\frac12}\Big)\;\rho(\xi)\, d\xi
       dR\label{eq:I3}
   \end{align}
   where $\chi$ is a smooth cutoff function which equals $0$ near
   $R=0$ and $1$ near $R=\infty$.  In all of the above integrals we can
   argue as in the proof of the classical Fourier inversion formula to
   change the order of integration. Integrating by parts in the first
   integral~\eqref{eq:I1} reveals that it cannot
   contribute a $\delta$-measure. Discarding the $R^{-2}$ terms from
   \eqref{eq:I2} and~\eqref{eq:I3} reduces us further to the
   expressions
\begin{align}
     &  -\int_0^\infty \int_0^\infty f(\xi) \chi(R) \Re
     \left[ \xi \partial_\xi({a}(\xi)\xi^{-\frac{1}{4}})
       \bar{a}(\eta)\eta^{-\frac{1}{4}}
       e^{iR(\xi^{\frac{1}{2}}-\eta^\frac12)}\right]\rho(\xi)\, d\xi dR \label{eq:I4}\\
& + \frac{15}{8}\int_0^\infty \int_0^\infty f(\xi) \chi(R) \Im
     \left[ \xi \partial_\xi({a}(\xi)\xi^{-\frac{1}{4}})
       \bar{a}(\eta)\eta^{-\frac{1}{4}}
       e^{iR(\xi^{\frac{1}{2}}-\eta^\frac12)}\right]R^{-1} (\xi^{-\frac12}-\eta^{-\frac12})\rho(\xi)\, d\xi dR \label{eq:I5}
   \end{align}
   The second integral \eqref{eq:I5} has both an
   $R^{-1}$ and a $(\xi^{-\frac12} -\eta^{-\frac12})$ factor so its
   contribution to $K$ is bounded. The first integral~\eqref{eq:I4}
contributes both a Hilbert transform type kernel  as well as a
$\delta$-measure to $K$. By inspection, the $\delta$ contribution is
\[\begin{split}
&-\frac12 \int_{-\infty}^\infty \Re
     \left[ \xi \partial_\xi({a}(\xi)\xi^{-\frac{1}{4}})
       \bar{a}(\eta)\eta^{-\frac{1}{4}}
       e^{iR(\xi^{\frac{1}{2}}-\eta^\frac12)}\right]\rho(\xi)\, dR
      \\
     &  = -\pi \Re
     \left[ \xi \partial_\xi({a}(\xi)\xi^{-\frac{1}{4}})
       \bar{a}(\eta)\eta^{-\frac{1}{4}}
       \right]\rho(\xi)\delta(\xi^{\frac12}-\eta^{\frac12})\\
       &= -2\pi \xi^{\frac12} \rho(\xi) \Re
     \left[ \xi \partial_\xi({a}(\xi)\xi^{-\frac{1}{4}})
       \bar{a}(\xi)\xi^{-\frac{1}{4}}
       \right]\delta(\xi-\eta)\\
&= -2\pi \xi^{\frac12} \rho(\xi) \Re
     \left[ -\frac14 \xi^{-\frac12}|a(\xi)|^2 + \xi^{\frac12} a(\xi)\bar{a}'(\xi)
       \right]\delta(\xi-\eta)\\
       &= \Big[\frac12 +
       \frac{\xi\rho'(\xi)}{\rho(\xi)}\Big]\delta(\xi-\eta)
\end{split}\]
where we used that $\rho(\xi)^{-1} = \pi |a|^{2}$ in the final step.
Combining this with the $\delta$-measure in \eqref{calk}
yields~\eqref{eq:kern}.

b)  Arguing as in part (a) we have
\[
K_e(\eta) = \frac{F(0,\eta)}{\eta}
\]
For $F$ we use the representation in \eqref{Kk} with $\xi$ replaced
by $0$ and $\phi(\cdot,\xi)$ replaced by $\phi_0$. The conclusion
easily follows from pointwise bounds on $\phi(\cdot,\eta)$ and its
derivatives.
 \end{proof}

 Next we consider the $L^2$ mapping properties for $\calK$. We
 introduce the weighted $L^2$ spaces $L^{2,\alpha}_\rho$
of functions on $\spec(\cL)$  with norm
 \begin{equation}\label{eq:L2weight}
 \| f\|_{L^{2,\alpha}_\rho}^2 :=  |f(0)|^2 +\int_0^\infty |f(\xi)|^2
 \la \xi\ra^{2\alpha} \rho(\xi)\,d\xi
 \end{equation}
 Then we have

\begin{proposition}
 a)  The operators $\calK_0$, $\calK$  map
  \[
  \calK_0\::\: L^{2,\alpha}_\rho \to L^{2,\alpha+1/2}_\rho, \qquad
 \calK \::\: L^{2,\alpha}_\rho \to L^{2,\alpha}_\rho.
  \]
b) In addition, we have the commutator bound
\[
[ \calK,\xi \partial_\xi] \::\:  L^{2,\alpha}_\rho \to
L^{2,\alpha}_\rho
\]
with $\xi\partial_\xi$ acting only on the continuous spectrum. Both
statements hold for all $\alpha\in\R$. \label{l2k}\end{proposition}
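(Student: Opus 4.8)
\emph{Proof strategy.} The plan is to split $\calK$, using~\eqref{eq:kern} together with the eigenvalue/continuous block decomposition, into four pieces: the scalar $\calK_{ee}$; the couplings $\calK_{ec},\calK_{ce}$; the diagonal multiplier $m(\eta)\delta(\xi-\eta)$ with $m(\eta):=-\big(\tfrac32+\tfrac{\eta\rho'(\eta)}{\rho(\eta)}\big)$; and the genuinely off-diagonal part $\calK_0$ of $\calK_{cc}$, whose kernel is $K_0(\eta,\xi)=\rho(\xi)F(\xi,\eta)/(\eta-\xi)$ in the principal value sense, cf.~\eqref{ketaxi}. The scalar $\calK_{ee}$ is harmless. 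By Theorem~\ref{tp}(b) the functions $K_e,K_e'$ are bounded and rapidly decaying, and since $\rho(\xi)\lesssim\langle\xi\rangle^2$ (Lemma~\ref{lem:spec_meas}) it follows that $\calK_{ec}$ and $\calK_{ce}$ map $L^{2,\alpha}_\rho$ boundedly into \emph{every} $L^{2,\gamma}_\rho$; thus they obstruct neither the half-power gain nor the commutator estimate. The symbol bounds on $\rho$ make $m$ a bounded function, so $m(\eta)\delta(\xi-\eta)$ is bounded on $L^{2,\alpha}_\rho$ with no gain --- which is exactly why $\calK$, unlike $\calK_0$, maps only to $L^{2,\alpha}_\rho$. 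Hence part (a) reduces to the core estimate $\calK_0\colon L^{2,\alpha}_\rho\to L^{2,\alpha+\frac12}_\rho$.

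To prove this, conjugate by the unitary $g\mapsto\langle\xi\rangle^{\alpha}\rho(\xi)^{\frac12}g$ from the continuous part of $L^{2,\alpha}_\rho$ onto $L^2((0,\infty),d\xi)$; the claim becomes $L^2(d\xi)$-boundedness of the operator with p.v.\ kernel
\[
\widetilde K(\eta,\xi)=\frac{A(\eta,\xi)}{\eta-\xi},\qquad
A(\eta,\xi):=\langle\eta\rangle^{\alpha+\frac12}\langle\xi\rangle^{-\alpha}\,\rho(\eta)^{\frac12}\rho(\xi)^{\frac12}\,F(\xi,\eta).
\]
I would split the first quadrant into three regions. On $\{\xi+\eta\lesssim1\}$ all weights and $\rho$ are comparable to $1$; writing $F(\xi,\eta)=F(\eta,\eta)+(\xi-\eta)G(\xi,\eta)$ with $G=\int_0^1\partial_\xi F(\eta+s(\xi-\eta),\eta)\,ds$ bounded (Theorem~\ref{tp}), the $G$-term is a compactly supported Hilbert--Schmidt kernel and the $F(\eta,\eta)$-term is $F(\eta,\eta)$ times a truncated Hilbert transform, both $L^2$-bounded. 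On the near-diagonal region $\{\xi+\eta\gtrsim1,\ |\sqrt\xi-\sqrt\eta|\le1\}$ one has $\xi\asymp\eta$, so $\langle\eta\rangle^{\alpha+\frac12}\langle\xi\rangle^{-\alpha}\asymp\langle\eta\rangle^{\frac12}$, $\rho(\eta)^{\frac12}\rho(\xi)^{\frac12}\asymp\langle\eta\rangle^{2}$, and $|F(\eta,\eta)|\lesssim\langle\eta\rangle^{-\frac52}$ by Theorem~\ref{tp}; the three factors balance \emph{exactly}, so $|A(\eta,\eta)|\lesssim1$ and (using $|\partial F|\lesssim\langle\eta\rangle^{-3}$ and the symbol bounds on $\rho$) $|\partial_\xi A|+|\partial_\eta A|\lesssim\langle\eta\rangle^{-\frac12}$. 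Subtracting $A(\eta,\eta)/(\eta-\xi)$ leaves a remainder of size $\lesssim\langle\eta\rangle^{-\frac12}$ on a $\xi$-region of width $\asymp\langle\eta\rangle^{\frac12}$, disposed of by Schur's test, while the principal part is the bounded function $A(\eta,\eta)$ times a truncated Hilbert transform, again $L^2$-bounded. On the far region $\{\xi+\eta\gtrsim1,\ |\sqrt\xi-\sqrt\eta|\ge1\}$ we have $|\eta-\xi|\gtrsim(\xi+\eta)^{\frac12}$ together with the factor $(1+|\sqrt\xi-\sqrt\eta|)^{-N}$ from Theorem~\ref{tp}; choosing $N=N(\alpha)$ large absorbs the weight ratio $\langle\eta\rangle^{\alpha}/\langle\xi\rangle^{\alpha}$, and a Schur test with weight $1$ --- evaluating the one-dimensional integrals after the substitution $v=\sqrt\xi$ --- closes the bound. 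This proves the $\calK_0$ estimate, hence (a).

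For part (b), I first note that if an operator acting on the continuous part has kernel $K(\eta,\xi)$, its commutator with $\xi\partial_\xi$ has kernel $-(\xi\partial_\xi+\eta\partial_\eta+1)K(\eta,\xi)$. Since $(\eta-\xi)^{-1}$ is homogeneous of degree $-1$ for the diagonal scaling, $(\xi\partial_\xi+\eta\partial_\eta+1)(\eta-\xi)^{-1}=0$, so $[\calK_0,\xi\partial_\xi]$ has kernel $-\rho(\xi)\widetilde F(\xi,\eta)/(\eta-\xi)$ with $\widetilde F:=\xi\partial_\xi F+\eta\partial_\eta F+(\text{bounded})\,F$ (using $|\xi\rho'|\lesssim\rho$). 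By Theorem~\ref{tp}, $\widetilde F$ and its first derivatives obey the same bounds as $F$ except one power of $(\xi+\eta)^{\frac12}$ worse; re-running the three-region argument above --- now \emph{without} a half-power to spare --- gives exactly $[\calK_0,\xi\partial_\xi]\colon L^{2,\alpha}_\rho\to L^{2,\alpha}_\rho$. The commutator of $m(\eta)\delta(\xi-\eta)$ with $\xi\partial_\xi$ is multiplication by $-\eta m'(\eta)$, again a bounded symbol; $[\calK_{ee},\xi\partial_\xi]=0$ since $\xi\partial_\xi$ vanishes on the eigenvalue component; and $[\calK_{ec},\xi\partial_\xi]$, $[\calK_{ce},\xi\partial_\xi]$ produce operators built only from $K_e$, $K_e'$ and $\eta K_e'$, which are bounded and rapidly decaying by Theorem~\ref{tp}(b). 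Assembling these pieces gives (b). All estimates are for a fixed but arbitrary $\alpha\in\R$; the only place $\alpha$ enters quantitatively is the choice of $N$ in the far region.

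The main obstacle is the near-diagonal analysis in the core estimate, where the inequality is sharp: after the weight normalization, peeling off the exact principal-value singularity $A(\eta,\eta)\,\mathrm{p.v.}\,(\eta-\xi)^{-1}$ leaves precisely a bounded multiple of a truncated Hilbert transform, whose $L^2$-boundedness just barely suffices; one must verify the exact balance $\langle\eta\rangle^{\frac12}\rho(\eta)\,|F(\eta,\eta)|\lesssim1$ (and, for (b), the balance $\rho(\eta)\,|\widetilde F(\eta,\eta)|\lesssim1$) from the $\langle\eta\rangle^{-\frac52}$ and derivative bounds of Theorem~\ref{tp}, and arrange the Schur bound for the off-diagonal remainder so that summation in scale incurs no logarithmic loss. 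The far-region weight mismatch, handled by the $(1+|\sqrt\xi-\sqrt\eta|)^{-N}$ decay, and the bookkeeping of the commutator kernels are comparatively routine.
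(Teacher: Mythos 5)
Your proof is correct and follows essentially the same strategy as the paper's: both reduce to $L^2(d\xi)$-boundedness of the weighted kernel $\langle\eta\rangle^{\alpha+\frac12}\langle\xi\rangle^{-\alpha}\sqrt{\rho(\xi)\rho(\eta)}\,F(\xi,\eta)/(\eta-\xi)$, split the first quadrant into a compact region, a near-diagonal region $|\sqrt\xi-\sqrt\eta|\lesssim1$, and a far region, and in the far region use the $(1+|\sqrt\xi-\sqrt\eta|)^{-N}$ decay to run Schur's test. The only real divergence is in the near-diagonal analysis: the paper localizes to dyadic squares $I_j\times I_j$ and invokes the $T1$ theorem (checking H\"older kernel bounds with $\gamma=1$ for small scales, $\gamma=\tfrac12$ for large, plus the restricted $L^2$ property), whereas you explicitly subtract the principal singularity $A(\eta,\eta)/(\eta-\xi)$, reduce to a truncated Hilbert transform for the leading term, and bound the smooth remainder by Schur. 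These are two standard and equivalent implementations of Calder\'on--Zygmund control of a $\mathrm{p.v.}$ kernel; your version is a bit more hands-on and arguably more transparent about where the balance $\langle\eta\rangle^{\frac12}\rho(\eta)|F(\eta,\eta)|\lesssim1$ is sharp, while the paper's $T1$ route encapsulates exactly the same content. The commutator argument in (b) --- homogeneity of $(\eta-\xi)^{-1}$, the degraded bounds on $F^{\mathrm{com}}=\widetilde F$, and the routine treatment of the diagonal multiplier and of $\calK_{ce},\calK_{ec}$ --- matches the paper's computation.

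One small point worth making explicit if you write this up: the truncated Hilbert transform you extract on the near-diagonal region has a truncation radius $\asymp\langle\eta\rangle^{\frac12}$ depending on the output variable, so its uniform $L^2$-boundedness is not literally the boundedness of a single fixed truncation; it follows, e.g., by writing it as the full Hilbert transform minus a tail with a Schur-summable kernel, or by appealing to the maximal truncated Hilbert transform. This is a minor technicality and does not affect correctness.
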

\begin{proof} We commence with the $\calK_0$ part.
a) The first property is equivalent to showing that the kernel
\[
\rho^{\frac12}(\eta)\la \eta\ra^{\alpha+1/2} K_0(\eta,\xi) \la
\xi\ra^{-\alpha} \rho^{-\frac12}(\xi) \::\: L^2(\R^+)\to L^2(\R^+)
\]
With the notation of the previous theorem, the kernel on the
left-hand side is
\[
\tilde K_0(\eta,\xi):= \la\eta\ra^{\alpha+1/2}\la\xi\ra^{-\alpha}
\frac{\sqrt{\rho(\xi)\rho(\eta)}}{\xi-\eta} F(\xi,\eta)
\]
We first separate the diagonal and off-diagonal behavior of $\tilde
K_0$, considering several cases.

  { \bf Case 1: $(\xi,\eta) \in Q := [0,4] \times [0,4]$.}

  We cover the unit interval with dyadic subintervals $I_j =
  [2^{j-1},2^{j+1}]$. We cover the diagonal with the union of squares
  \[
  A = \bigcup_{j=-\infty}^2 I_j \times I_j
  \]
  and divide the kernel $\tilde K_0$ into
  \[
  1_Q\tilde K_0 = 1_{A\cap Q} \tilde K_0 + 1_{Q \setminus A} \tilde K_0
  \]

  { \bf Case 1(a)}: Here we show that the diagonal part $1_{A\cap Q} \tilde K_0$ of
  $\tilde K_0$ maps $L^2$ to $L^2$. By orthogonality it suffices to restrict
  ourselves to a single square $I_j \times I_j$. We recall the $T1$
  theorem for Calderon-Zygmund operators, see page~293 in~\cite{Stein}: suppose the kernel
  $K(\eta,\xi)$ on $\R^2$ defines an operator $T:\calS\to\calS'$ and has the
  following pointwise properties with some $\gamma\in(0,1]$ and a constant $C_0$:
  \begin{enumerate}
  \item[(i)] $|K(\eta,\xi)|\le C_0|\xi-\eta|^{-1}$
  \item[(ii)] $|K(\eta,\xi)-K(\eta',\xi)|\le
  C_0|\eta-\eta'|^\gamma|\xi-\eta|^{-1-\gamma}$ for all
  $|\eta-\eta'|<|\xi-\eta|/2$
\item[(iii)] $|K(\eta,\xi)-K(\eta,\xi')|\le
  C_0|\xi-\xi'|^\gamma|\xi-\eta|^{-1-\gamma}$ for all
  $|\xi-\xi'|<|\xi-\eta|/2$
\end{enumerate}
If in addition $T$ has the restricted $L^2$ boundedness property,
i.e., for all $r>0$ and $\xi_0,\eta_0\in\R$,
$\|T(\omega^{r,\xi_0})\|_2 \le C_0r^{\frac12}$ and
$\|T^*(\omega^{r,\eta_0})\|_2 \le C_0r^{\frac12}$ where
$\omega^{r,\xi_0}(\xi)= \omega((\xi-\xi_0)/r)$ with a fixed
bump-function $\omega$, then $T$ and $T^*$ are $L^2(\R)$ bounded
with an operator norm that only depends on~$C_0$.

Within the
  square $I_j\times I_j$,
  Theorem~\ref{tp} shows that the kernel of $\tilde K_0$ satisfies these properties with $\gamma=1$,
  and is thus bounded on $L^2$.

  { \bf Case 1(b)}: Consider now the off-diagonal part $1_{Q \setminus
    A} \tilde K_0$. In this region, by Theorem~\ref{tp},
    \[
|\tilde K_0(\eta,\xi)| \less (\xi\eta)^{-\frac14}
    \]
which is a Hilbert-Schmidt kernel on $Q$ and thus $L^2$ bounded.

  {\bf Case 2:} $(\xi,\eta) \in Q^c $. We cover the diagonal with the union of squares
  \[
  B = \bigcup_{j=1}^\infty I_j \times I_j
  \]
  and divide the kernel $\tilde K_0$ into
  \[
  1_{Q^c}\tilde K_0 = 1_{B\cap Q^c} \tilde K_0 + 1_{Q^c \setminus B} \tilde K_0
  \]

  {\bf Case 2a:} Here we consider the estimate on $B$. As in case 1a)
  above, we use Calderon-Zygmund theory. Evidently, $|\tilde
  K_0(\eta,\xi)|\less |\xi-\eta|^{-1}$ on $B$ by Theorem~\ref{tp}.
  To check (ii) and (iii), we differentiate $\tilde K_0$. It will suffice to
  consider the case where the $\partial_\xi$ derivative falls on $F(\xi,\eta)$.
  We distinguish two cases:
if $|\xi^{\frac12}-\eta^{\frac12}|\le 1$, then $|\xi-\eta|\less
\xi^{\frac12}$ which implies that
\[
\frac{\xi^{-\frac12}|\xi-\xi'|}{|\xi-\eta|} \less
\frac{|\xi-\xi'|^{\frac12}}{|\xi-\eta|^{\frac32}} \qquad \forall\;
|\xi-\xi'|<|\xi-\eta|/2
\]
if, on the other hand, $|\xi^{\frac12}-\eta^{\frac12}|> 1$, then
\[
\frac{\xi^{-\frac12}|\xi-\xi'|}{|\xi-\eta|\,
|\xi^{\frac12}-\eta^{\frac12}|} \less
\frac{|\xi-\xi'|}{|\xi-\eta|^2} \qquad \forall\;
|\xi-\xi'|<|\xi-\eta|/2
\]
which proves property (iii) on $B$ with $\gamma=\frac12$, and by
symmetry also (ii). The restricted $L^2$ property follows form the
cancellation in the kernel and the previous bounds on the kernel.
Hence, $\tilde K_0$ is $L^2$ bounded on~$B$.

  {\bf Case 2b:} Finally, in the exterior region $Q^c \setminus B$ we
  have the bound, with arbitrarily large $N$,
  \[
  |\tilde K_0(\eta,\xi)| \lesssim  (1+\xi)^{-N} (1+\eta)^{-N}
  \]
  which is $L^2$ bounded by Schur's lemma.

This concludes the proof of the first mapping property in part (a).
The second one follows in a straightforward manner since $K_e$ is
rapidly decaying at $\infty$.

b) A direct computation shows that the kernel $K_0^{com}$ of the
commutator $[\xi \partial_\xi, K_0]$  is given by
\[
K_0^{com}(\eta,\xi) = (\eta \partial_\eta + \xi \partial_\xi)
K_0(\eta,\xi) + K_0(\eta,\xi) =
\frac{\rho(\xi)}{\xi-\eta}F^{com}(\xi,\eta)
\]
interpreted in the principal value sense and with $F^{com}$ given by
\[
F^{com}(\xi,\eta) = \frac{\xi \rho'(\xi)}{\rho(\xi)} F(\xi,\eta) +
(\xi \partial_\xi+\eta
\partial_\eta) F(\xi,\eta)
\]
By Theorem~\ref{tp} this satisfies the same pointwise off-diagonal
bounds as $F$. Near the diagonal the bounds for $F^{com}$ and its
derivatives are worse than those for $F$ by a factor of
$(1+\xi)^\frac12$. Then the proof of the $L^2$ commutator bound for
$K_0$ is similar to the argument in part~(a).

The remaining part of the commutator $[\calK,\xi \partial_\xi]$
involves

(i) The commutator of the diagonal part of $\calK_{cc}$ with $\xi
\partial_\xi$. This is the multiplication operator by
\[
\xi \partial_\xi \frac{\xi \rho'(\xi)}{\rho(\xi)}
\]
which is bounded since $\rho$ has symbol like behavior both at $0$
and at $\infty$.

(ii) The operator $\xi \partial_\xi \calK_{ce}$ which is given by
the bounded rapidly decreasing function $\xi \partial_\xi K_e(\xi)$.

(iii)  The operator $\calK_{ec}\xi \partial_\xi $ given by
\[
\calK_{ec}\xi \partial_\xi f = \int_0^\infty K_e(\xi) \xi
\partial_\xi f(\xi) d\xi = - \int_0^\infty  f(\xi) \partial_\xi (\xi
K_e(\xi))d\xi
\]
which is also bounded due to the properties of $K_e$.
\end{proof}

\section{The second order transport equation}
\label{sec:lin}

This section is devoted to the study of the linear problem \eqref{eqlin}
which we restate here in the form
\begin{equation}
(-\pr_t^2 + \pr_r^2 +r^{-1}\pr_r + 2r^{-2}(1-3Q(R)^2))\eps = f
-12 \omega^2  \frac{R^2(1-R^2)}{(1+R^2)^3}\eps
\label{eqlina}\end{equation}
We recall that the second term on the right-hand side here arises due
to fact that its decay is of the same nature (namely $\omega^2$) as
that of other error terms which we will encounter in the parametrix
construction of this section. By doing this, the remaining terms in
the nonlinearity $\calN$ in \eqref{eqlin} decay more rapidly at
infinity. Our main result asserts that
\begin{proposition}
The backward solution $\epsilon$ for \eqref{eqlina} satisfies the
bound
\begin{equation}
\| \epsilon \|_{H^1_N} \lesssim \frac{1}N \|f\|_{L^2_N}
\label{linbd}\end{equation}
for all large enough $N$.
\label{epsmain}\end{proposition}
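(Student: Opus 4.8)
The plan is to diagonalize $\calL$ and solve \eqref{eqlina} on its spectral side. First I pass to the coordinates $(\tau,R)$ and set $\tileps=R^{1/2}\eps$, $\tilf=\lambda^{-2}R^{1/2}f$; as recorded above this turns \eqref{eqlina} into
\[
\big[-(\partial_\tau+\omega R\partial_R)^2+\tfrac14\omega^2+\tfrac12\omega_\tau\big]\tileps-\calL\,\tileps=\tilf+V\tileps ,
\]
where $\omega=\lambda_\tau/\lambda\asymp|\log t|^{-\beta}$ and $V$ is the bounded, rapidly decaying potential on the right of \eqref{eqlina}, which carries the small factor $\omega^2$. Next I apply the distorted Fourier transform $\calF$ of Theorem~\ref{thm:GZ}. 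By Plancherel, $\calL$ becomes multiplication by $\xi$ (with $0$ in the eigenvalue slot), multiplication by $V$ becomes an operator of norm $O(\omega^2)$, and by the transference identity \eqref{eq:transfer} the field $R\partial_R$ becomes $-2\xi\partial_\xi+\calK$. Writing $\hat{\tileps}=x=(x_0(\tau),x(\tau,\cdot))$, $\hat{\tilf}=y$, the equation takes the form of a second order transport system
\[
-\mathcal D^2x-\xi x=y+\calE x ,\qquad \mathcal D:=\partial_\tau-2\omega\xi\partial_\xi ,
\]
where $\calE$ collects all remaining terms: the commutator contributions $\omega\,[\mathcal D,\calK]$, the $\omega\calK$ cross terms and $\omega^2\calK^2$, the multipliers $\tfrac14\omega^2+\tfrac12\omega_\tau$, and $\hat V$. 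By Proposition~\ref{l2k} and the decay of $V$, $\calE$ is bounded on the weighted spaces $L^{2,\alpha}_\rho$, uniformly in $\tau$, and always with a gain of a power of $\omega$ (or of the $\tau$--weight, since $\calK$ raises it).

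The core of the argument is a parametrix for the principal part $-\mathcal D^2x-\xi x=g$. The vector field $\mathcal D$ has characteristics along which $\xi\lambda(\tau)^2$ is constant, because $\frac{d}{d\tau}(\xi\lambda^2)=0$ on them; passing to coordinates $(\tau,\nu)$ with $\nu:=\xi\lambda^2$ straightens $\mathcal D$ into $\partial_\tau$, and the principal equation becomes, for each frozen $\nu$, the scalar ODE $\ddot x+\nu\lambda(\tau)^{-2}x=-g$, a harmonic oscillator whose frequency $\nu\lambda(\tau)^{-2}\to0$ as $\tau\to\infty$ (equivalently $t\to0$). I solve it backward from $\tau=\infty$ with vanishing data — this is the ``backward solution'' of the statement, which is moreover supported in the cone $\{r\le t\}$ by finite speed of propagation when $f$ is — using the explicit fundamental system of the oscillator, and then reassemble in $R$ using the asymptotics of $\rho$ and of $\phi(R,\xi)$ from Sections~\ref{sec:spec} and~\ref{sec:transference} to control the resulting operator between $\tau$--weighted $L^2_\rho$ spaces. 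The eigenvalue component $x_0$ obeys the degenerate equation $-\ddot x_0+(\text{l.o.t.})=y_0$, with no $\xi x_0$ restoring term; here I must invoke the orthogonality condition built into the choice of $\lambda(t)$ — the same one responsible for $\langle R\Phi'(R)+\Phi(R),\Phi\rangle_{\R^2}=0$ in Section~\ref{sec:mod} — to keep $x_0$ from growing, and it is precisely at this point that the zero-energy eigenvalue (rather than a resonance as in \cite{KST1}) forces a genuine change of scheme.

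The factor $1/N$ in \eqref{linbd} comes out of the Duhamel integral once one returns to the $t$ variable: since $\|y(s)\|_{L^2_\rho}=\|\tilf(s)\|_{L^2(dR)}\lesssim\lambda^{-1}(s)\|f(s)\|_{L^2(r\,dr)}\lesssim|\log s|^{-N}\|f\|_{L^2_N}$, while the parametrix kernel is bounded up to the harmless mild growth of $\lambda$, which the weights absorb, the contribution to the $|\log t|^{N-\beta-1}$--weighted norm is controlled by
\[
|\log t|^{N-\beta-1}\int_0^t s^{-1}|\log s|^{\beta-N}\,ds=\frac{|\log t|^{N-\beta-1}}{N-\beta-1}\,|\log t|^{\beta+1-N}=\frac{1}{N-\beta-1}\lesssim\frac1N .
\]
Since $\calE$ carries an extra small factor, the full system $-\mathcal D^2x-\xi x=y+\calE x$ is then solved by a convergent Neumann series around the parametrix; and the three ingredients of the $H^1_N$ norm of $\eps$ correspond, after undoing the changes of variables (and using the elliptic bound \eqref{ent}), to the $\tau$--weighted $L^2_\rho$ norms of $x$, of $\xi^{1/2}x$, and of $\mathcal D x$, all of which the parametrix controls, so \eqref{linbd} follows. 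The main obstacle is the parametrix step itself: building the forward/backward fundamental solution of the degenerate second order transport operator and pushing the oscillatory and kernel estimates through the transition region $R^2\xi\asymp1$ while tracking the $1/N$ gain, together with the separate analysis of the eigenvalue component $x_0$, which has no analogue in the resonant case of \cite{KST1}.
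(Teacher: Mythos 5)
Your overall strategy --- pass to $(\tau,R)$, conjugate by $R^{1/2}$, apply the distorted Fourier transform, straighten the transport field $\mathcal D=\partial_\tau-2\omega\xi\partial_\xi$ along the characteristics $\xi\lambda^2=\mathrm{const}$, and solve the resulting scalar ODEs backward --- is indeed the one the paper follows in Section~\ref{sec:lin}. But there are two substantive problems with the proposal as written.

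First, the central technical difficulty of the paper's proof is that the perturbative term $\calE$ does \emph{not} uniformly gain a power of $\omega$ or of the weight. You claim ``Since $\calE$ carries an extra small factor, the full system $\ldots$ is then solved by a convergent Neumann series around the parametrix.'' This is false for the term $2\omega\,\calK_0 D_\tau$: since $D_\tau x$ sits one weight level below $x$ and $\omega\asymp\tau^{-1}$, the factor $\omega$ exactly compensates that loss, and $\calK_0$ on the weighted spaces has only a bounded, not small, norm. The paper spells this out explicitly (``The difficulty is that there is no smallness in the above relation, and it is not possible to gain any smallness by letting $\tau$ be large enough''), which is why it splits $Q=Q_g+Q_b$, introduces the dyadic $l^\infty_N L^2_{\rho m}$ norms with the weight $m(\xi)=\xi^\nu+\xi^{-\nu}$, and proves only the iterated bound $\|(TQ_b)^n\|\ll1$ for large $n$ by separating $\calK_0$ into a near-diagonal piece treated by a volume/factorial estimate and a far-off-diagonal piece treated by stationary phase. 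Skipping this makes the convergence claim in your proposal unjustified --- it is precisely the step the Neumann-series heuristic cannot supply.

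Second, your invocation of the orthogonality condition $\langle R\Phi'+\Phi,\Phi\rangle_{\R^2}=0$ ``to keep $x_0$ from growing'' is misplaced. That orthogonality is used in the renormalization step of Section~\ref{sec:mod} to control the profile $v_{10}$; it plays no role in the linear estimate of Proposition~\ref{epsmain}, which must hold for arbitrary $f$ (and hence arbitrary $g_0$). The zero-energy eigenvalue does give $x_0$ a degenerate ODE $-\partial_\tau(\partial_\tau-\omega)x_0=g_0$ with no restoring term, but this is handled directly: the backward fundamental solution $U_0(\tau,\sigma)=\lambda(\tau)\int_\tau^\sigma\lambda^{-1}$ is bounded by $\omega^{-1}(\tau)$ for $\sigma\ge\tau$ (Lemma~\ref{lem:x0_param}), with no orthogonality hypothesis needed. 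You should replace the appeal to orthogonality with the explicit backward-in-time estimate on this two-by-two fundamental system.
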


\begin{proof}
 We work in the coordinates $(R,\tau)$ given by
\[
R= r \lambda(t), \qquad \tau = \int_t^{1} \lambda(s)\, ds =
(\beta+1)^{-1}|\log t|^{\beta+1}
\]
for any $0<t<1$. For future reference, we note that
\[
t\lambda(t) = ((\beta+1)\tau)^{\frac{\beta}{\beta+1}}, \qquad
\lambda(\tau) = ((\beta+1)\tau)^{\frac{\beta}{\beta+1}}
e^{((\beta+1)\tau)^{\frac{1}{\beta+1}}}
\]
We introduce the auxiliary weight function $\omega(\tau)$
\[
\omega(\tau):=\lambda^{-1}\lambda_\tau(\tau)=
\frac{\beta}{\beta+1} \tau^{-1}
+ ((\beta+1)\tau)^{-\frac{\beta}{\beta+1}}
\]
and note that
\begin{equation}
 (t\lambda)^{-1} =
\omega(\tau) - \frac{\beta}{\beta+1}\tau^{-1} \label{eq:tlam}
\end{equation}

Then
\begin{align*}
  \pr_t &= \frac{\pr \tau}{\pr t} (\pr_\tau + R_\tau \pr_R) =
  -\lambda(\tau) (\pr_\tau + \omega R\pr_R) \\
  \pr_t^2 &= \lambda^2(\tau)\big[ (\pr_\tau + \omega
  R\pr_R)^2 + \omega (\pr_\tau + \omega R\pr_R) \big]
\end{align*}
therefore  the equation \eqref{eqlina} becomes
\begin{align*}
\Big[- (\pr_\tau + \omega
  R\pr_R)^2 - \omega (\pr_\tau + \omega R\pr_R) & \
  + \pr_R^2  +\frac{1}R \pr_R + \frac2{R^{2}}(1-3Q(R)^2)\Big]\eps =
\\
&  \lambda^{-2}f -12\omega^2
  \frac{R^2(1-R^2)}{(1+R^2)^3} \eps
\end{align*}
At this point it is convenient to switch to the notations
\begin{equation}
\tileps(\tau,R) =R^{\frac12} \eps(\tau,R), \qquad  \tilde f(\tau,R) :=
  R^{\frac12}\lambda^{-2}f(\tau,R)
\end{equation}
 Since
\[
R^{\frac12} (\pr_\tau + \omega R\pr_R)R^{-\frac12} = \pr_\tau +
\omega R\pr_R -  \omega/2,
\]
one concludes from conjugating the previous PDE by $R^{\frac12}$
that
\begin{align}
\Big[- (\pr_\tau + \omega
  R\pr_R)^2 + \frac12 \dot\omega +
  \frac14 \omega^2 -\calL\Big]\tilde\eps
= \tilde f -12 \omega^2
\frac{R^2(1-R^2)}{(1+R^2)^3}\tilde \eps \label{eq:main1a}
\end{align}
where $\dot\omega:=\partial_\tau\omega$ and
\[\begin{split}
\calL &:= -\pr_R^2  + \frac{15}{4R^2} - \frac{24}{(1+R^2)^2}
\end{split}
\]

Written in terms of $(\tileps,\tilde f)$ the estimate \eqref{linbd}
takes the form\footnote{Here we slightly abuse notations since the
  $N$'s in \eqref{linbd} and \eqref{tlinbd} do not coincide, instead
  they are linearly related.}
\begin{equation}
\| \tileps \|_{\tilde H^1_N} \lesssim \frac{1}{N}\|\tilde f\|_{\tilde L^2_N}
\label{tlinbd}
\end{equation}
where
\[
\| \tileps \|_{\tilde H^1_N} = \sup_{\tau > \tau_0}
\tau^{N-1-\frac{\beta}{\beta+1}} \|\tileps(\tau)\|_{L^2} +
\tau^{N-1}(\|\calL^\frac12 \tileps(\tau)\|_{L^2} + \|(\partial_\tau +
\omega R \partial_R)  \tileps(\tau)\|_{L^2})
\]
respectively
\[
\|\tilde f\|_{\tilde L^2_N} = \sup_{\tau > \tau_0}  \tau^{N} \|\tilde
f(\tau)\|_{L^2}
\]

In order to take advantage of the spectral properties of the operator
$\calL$ we conjugate the equation \eqref{eq:main1a} by the Fourier
transform $\calF$  adapted to $\calL$.  The transference identity is
\[
\calF R\pr_R \calF^{-1} = -2\xi\pr_\xi + \calK
\]
where
\[
\calK = \left[ \begin{matrix} -\frac12 & \calK_{ec} \\ \calK_{ce} &
\calK_{cc} \end{matrix} \right] =-\frac12 \Id + \left[
\begin{matrix} 0 & \calK_{ec} \\ \calK_{ce} &
-(1+\eta\rho'(\eta)/\rho(\eta))\delta(\xi-\eta) + \calK_0
\end{matrix} \right]
\]
and
\begin{align*}
  \calK_{ec}f &= -\int_0^\infty f(\xi) K_e(\xi)\rho(\xi)\,d\xi,\quad
  \calK_{ce} = K_e \\
 K_e(\xi) &= \langle R\phi_0'(R),\phi(R,\xi)\rangle
\end{align*}
We  write
\[
-\calF R\pr_R \calF^{-1} = \frac12\Id + \calK_d + \calK_{nd}
\]
where
\begin{align*}
\calK_d &= \left[ \begin{matrix} 0&0\\0&
2\xi\pr_\xi+(1+\xi\rho'(\xi)/\rho(\xi))
\end{matrix}\right] =  \left[ \begin{matrix} 0&0\\0&
\calD_0
\end{matrix}\right]  \\
\calK_{nd} &= -\left[ \begin{matrix} 0&\calK_{ec}\\ \calK_{ce}&
\calK_0
\end{matrix}\right]
\end{align*}
Then
\[
\calF ( \pr_\tau + \omega
  R\pr_R  )\calF^{-1}= D_\tau+\frac{\omega}2 - \omega \calK_{nd}, \qquad D_\tau =
\pr_\tau -\omega(1 +\calK_d)
\]
therefore
\begin{align*}
&\calF ( \pr_\tau + \omega
  R\pr_R  )^2  \calF^{-1}\! = \! (D_\tau+\frac{\omega}2)^2 \!
 - 2\omega \calK_{nd}D_\tau  \! + \omega^2 (
[\calK_{nd},\calK_d] + \calK_{nd}^2-\calK_{nd} ) \!  - \dot\omega\calK_{nd}
\end{align*}
Next we consider the Fourier transform of the last term in
\eqref{eq:main1a}, which we express in the form
\[
\calF \left(-12\frac{R^2(1-R^2)}{(1+R^2)^3}\tilde \eps\right) = \calJ \calF\tilde \eps,
\qquad
\calJ = \left[ \begin{matrix} \calJ_{ee}&\calJ_{ec}\\ \calF_{ce}&
\calJ_{cc}
\end{matrix}\right]
\]
We note that
\[
\calJ_{ee} = \|\phi_0\|^{-2}_{L^2} \langle \phi_0,
\frac{-12 R^2(1-R^2)}{(1+R^2)^3}\phi_0 \rangle =
\left(\frac16\right)^{-1} \frac{1}{10}=\frac{3}{5}
\]
while
\[
\calJ_{ce} = J_e(\xi) =  \langle \phi(R,\xi),
\frac{-12 R^2(1-R^2)}{(1+R^2)^3}\phi_0 \rangle
\]
and
\[
\calJ_{ec} x = \int_0^\infty \rho(\xi) J_e(\xi) x(\xi) d\xi
\]
We remark that the kernel $J_e$ is bounded and rapidly decreasing at
infinity. Finally,
\[
\calJ_{cc} x(\xi) = \int_0^\infty \rho(\xi) J_{cc}(\xi,\eta) x(\eta) d\eta
\]
with
\[
 J_{cc}(\xi,\eta) = \int_{0}^\infty -12 \frac{R^2(1-R^2)}{(1+R^2)^3}
\rho(\eta) \phi(R,\eta) \phi(R,\xi) dR
\]
This is bounded and has the off-diagonal decay property
\begin{equation}
J_{cc}(\xi,\eta) | \lesssim (1+\xi)^{-\frac12} (1+|\xi^\frac12 -\eta^\frac12|)^{-N}
\label{jcc}\end{equation}

Taking into account all the notations above,
the equation  \eqref{eq:main1a}  becomes
\[
\begin{split}
\Big[- D_\tau^2 - \omega D_\tau -\xi  \Big]\calF\tilde\eps
=&\ \calF \tilde f
 - 2\omega \calK_{nd}D_\tau  \calF \tileps
\\ &\ + \omega^2 (
[\calK_{nd},\calK_d] + \calK_{nd}^2 -\calK_{nd} +  \calJ ) \calF \tileps
 - \dot\omega\calK_{nd}  \calF \tileps
\end{split}
\]

Next,  write $\calF \tilde\eps=\left[\begin{matrix}  x_0 \\
x \end{matrix}\right]$  and  $\calF \tilde f=\left[\begin{matrix}  g_0 \\
g \end{matrix}\right]$, or equivalently,
\begin{align*}
\tilde\eps(\tau,R) &=x_0(\tau)\phi_0(R)\|\phi_0\|_2^{-2}
+\int_0^\infty x(\tau,\xi)\phi(R,\xi)\rho(\xi)\,d\xi =:
 \tilde\eps_0 + \tilde\eps_c \end{align*} where $\tilde\eps_0\perp
\tilde\eps_c$ for all $\tau\ge0$ (recall
$\phi_0(R)=R^{\frac52}(1+R^2)^{-2}$ and $\calL\phi_0=0$).
To write the system for $\left[\begin{matrix}  x_0 \\
x \end{matrix}\right]$ we compute
\begin{align*}
  \calK_{nd}^2 &=  \left[ \begin{matrix} 0&\calK_{ec}\\ \calK_{ce}&
\calK_0
\end{matrix}\right]^2 =
 \left[ \begin{matrix}  \calK_{ec}\calK_{ce} & \calK_{ec}\calK_0\\
\calK_0\calK_{ce} & \calK_{ce}\calK_{ec}+\calK_0^2
\end{matrix}\right] \\
\calK_{nd}\calK_d &=  -\left[ \begin{matrix} 0&\calK_{ec}\\
\calK_{ce}& \calK_0
\end{matrix}\right]\left[ \begin{matrix} 0&0\\0& \calD_0
\end{matrix}\right] = \left[ \begin{matrix}
0 & -\calK_{ec}\calD_0 \\ 0 & - \calK_0 \calD_0\end{matrix}\right]\\
\calK_{d}\calK_{nd} &=  -\left[ \begin{matrix} 0&0\\0& \calD_0
\end{matrix}\right]\left[ \begin{matrix} 0&\calK_{ec}\\
\calK_{ce}& \calK_0
\end{matrix}\right] = \left[ \begin{matrix}
0 & 0 \\  -\calD_0\calK_{ce} & - \calD_0\calK_0 \end{matrix}\right]
\end{align*}
We also note that
\[
-K_{ec} K_{ce} = \int_{0}^\infty \rho(\xi)|K_e(\xi)|^2 d\xi
= \frac{\| R \partial_R \phi_0\|_{L^2}^2 }{\|\phi_0\|_{L^2}^2}
-  \frac{\langle R \partial_R \phi_0,\phi_0\rangle^2 }{\|\phi_0\|_{L^2}^4}
= 6 \frac{17}{120} -\frac14 = \frac35
\]

Then we seek to write the equations for  $x_0$ and $x$  in the form of a
diagonal system with perturbative coupling,
\begin{equation}
P \left[\begin{matrix}  x_0 \\
x \end{matrix}\right] = \left[\begin{matrix}  g_0 \\
g \end{matrix}\right] + Q  \left[\begin{matrix}  x_0 \\
x \end{matrix}\right]
\label{xmain}\end{equation}
where
\[
P = \left[\begin{matrix}  P_e & 0 \\
0 & P_c \end{matrix}\right],
\qquad Q = \left[\begin{matrix}  0 & Q_{ec} \\
Q_{ce} & Q_{cc} \end{matrix}\right]
\]
with the principal part given by
\[
P_e = - \pr_\tau(\pr_\tau -\omega)
\]
respectively
\[
P_c = -D_\tau^2 - \omega D_\tau -\xi
\]
and the coupling terms of the form
\[
 Q_{ec} x = \omega^2 \calR_{ec} x
-2\omega \calK_{ec} D_\tau x
\]
with
\[
\calR_{ec}= (\omega^{-2}\dot\omega -1) \calK_{ec}
  +  \calK_{ec}\calK_0  + \calJ_{ec}
\]
while
\[
Q_{ce} x_0 = \omega^2 \calR_{ce} x_0  -2 \omega \calK_{ce} \partial_\tau
x_0, \qquad Q_{cc} x = \omega^2 \calR_{cc} x  - 2\omega \calK_{0}D_\tau
x
\]
with
\[
 \calR_{cc} = [\calK_0,\calB_0] +  \calK_{0}^2 + \calK_{ce} \calK_{ec}
 + \calJ_{cc}
\]
respectively
\[
\calR_{ce} =  -\calB_0 \calK_{ce}  -\calK_0 \calK_{ce}   + \calJ_{ce}
\]

Our main solvability result in Proposition~\ref{epsmain} for the
equation \eqref{eqlina} is restated in terms of the system
\eqref{xmain} as follows:

\begin{proposition} \label{propxg}
For each
with $(g_0,g)$ which satisfy
\[
|g_0(\tau)|\le \tau^{-N},\quad \|g(\tau,\cdot)\|_{L^2_\rho}\le
\tau^{-N}
\]
there exists an unique solution $(x,x_0)$ for the system
\eqref{xmain}  decaying at infinity. This solution satisfies the
bounds
\begin{equation}
| x_0(\tau)| \lesssim \frac{1}{N} \tau^{-N+\frac{2\beta+1}{\beta+1}},
\qquad
|\partial_\tau x_0|  \lesssim \tau^{-N+\frac{\beta}{\beta+1}}
\end{equation}
respectively
\begin{equation}
 \|x(\tau)\|_{L^2_\rho} \lesssim
\frac{1}{N} \tau^{-N+\frac{2\beta+1}{\beta+1}}, \qquad
\|\xi^\frac12 x(\tau)\|_{L^2_\rho}+ \|D_\tau
x(\tau)\|_{L^2_\rho} \lesssim  \frac{1}{N} \tau^{-N+1}
\end{equation}
\end{proposition}

\begin{proof}
Our strategy is  to solve first  the simpler linear equations
\begin{align}
 \displaystyle
 - \pr_\tau (\pr_\tau-\omega)
x_0 &= g_0 \label{eq:x0} \\
\Big[-D_\tau^2 - \omega D_\tau -\xi\Big] x &= g
\label{eq:x}
\end{align}
Then we will show that the right hand side in the system \eqref{xmain}
is perturbative.  We start with the linear operator governing $x_0$,
and introduce the appropriate function spaces for $x_0$ and $g_0$:
\[
\| g_0\|_{Y_0^N} =  \sup_{\tau \ge\tau_0} \tau^N |g_0(\tau)|
\]
\[
\| x_0\|_{X_0^N} =  \sup_{\tau \ge\tau_0}
 \tau^{N-\frac{2\beta+1}{\beta+1}} |x_0(\tau)|
+  \tau^{N-\frac{\beta}{\beta+1}} |\partial_\tau x_0(\tau)|
\]

\begin{lemma}
  \label{lem:x0_param} The backward solution operator $x_0 = T_e g_0$
 for \eqref{eq:x0} satisfies the estimate
  \begin{equation}
    \label{eq:T0bds}
 \| T_e g_0 \|_{X^N_0} \lesssim  \|g_0(\sigma)\|_{Y^N_0}
  \end{equation}
  for any  $N\ge 2$.
\end{lemma}
\begin{proof}
A fundamental basis of solutions of $ - \pr_\tau(\pr_\tau-\omega)$ is
given by
\[
a_+(\tau) = \lambda(\tau), \qquad a_{-}(\tau) = \lambda(\tau) \int_\tau^\infty \lambda^{-1}(\sigma) \,d\sigma = \omega^{-1}(\tau) (1 + O(\tau^{-\frac{1}{\beta+1}}))
\]
and has Wronskian $W(\tau) = \lambda(\tau)$.
Then the backward fundamental solution is given by
\begin{equation}
U_0(\tau,\sigma) = W^{-1}(\sigma)(a_+(\tau) a_{-}(\sigma) - a_+(\sigma) a_{-}(\tau)) =
\lambda(\tau) \int_\tau^\sigma \lambda^{-1}(s) ds
\label{u0bd}\end{equation}
A direct computation shows that $U_0$ satisfies the bounds
\[
|U_0(\tau,\sigma)| \lesssim  \frac{1}{\omega(\tau)} ,
\qquad | \partial_\tau U_0(\tau,\sigma)| \lesssim  \tau^{-\frac{1}{\beta+1}}+
\frac{\omega(\tau)}{\omega(\sigma)} \frac{\lambda(\tau)}
{\lambda(\sigma)}
\]
The conclusion of the lemma follows.

\end{proof}

Next we bound the solution of the equation~\eqref{eq:x}, which is
hyperbolic. One is tempted to define spaces $X^N$ and $Y^N$ in a
manner which is similar to $X^N_0$ and $Y^N_0$. This would work for
the linear theory for \eqref{eq:x}, but would not be strong enough in
order to treat the right hand side in \eqref{xmain} in a perturbative
manner. Instead we define some stronger spaces using the additional weight
\[
m(\xi) = \xi^{\nu} + \xi^{-\nu}
\]
where $\nu> 0$ is a fixed small parameter.
We define the space $L^\infty_N L^2_\rho$ with norm
\[
\|g\|_{L^\infty_N L^2_\rho} = \sup_{\tau_> \tau_0}
 \tau^{N} \|g_1\|_{L^2_\rho}
\]
and the dyadic $L^2$ space $l^\infty_N L^2_{\rho m}$ with norm
\[
\|g\|_{l^\infty_N L^2_{\rho m}} = \sup_{\tau_> \tau_0}
 \|\sigma^{N-\frac{\beta}{2(\beta+1)}} g(\sigma)\|_{L^2_{\rho m}
   ([\tau,2\tau]\times \R)}
\]
Then  we define the $Y^N$ space as a sum of two spaces,
\[
\|g\|_{Y^N} = \inf_{g=g_1+g_2}
 \|g_1\|_{L^\infty_N L^2_\rho} + \| g_2(\sigma)\|_{l^\infty_{N-\frac{1}{\beta+1}} L^2_{\rho m}}
\]
Similarly we introduce  the $X^N$ space with norm
\[
\|x\|_{X^N} = \| x\|_{L^\infty_{N-1-\frac{\beta}{\beta+1}} L^2_\rho}
+\| (\xi^\frac12 x, D_\tau x)\|_{L^\infty_{N-1}  L^2_\rho \cap
l^\infty_{N-1} L^2_{\rho/ m}}
\]
Then our solvability result for \eqref{eq:x} is as follows:

\begin{lemma}
  \label{lem:xbd} The backward solution operator $x=T_c g$ for
  the equation~\eqref{eq:x} satisfies
  \begin{equation}
\| T_c g\|_{X^N} \lesssim \|g\|_{Y^N}
  \label{eq:xbd} \end{equation}
In addition we have the smallness relation
  \begin{equation}
\| T_c g\|_{X^N} \lesssim \frac1{\sqrt{N}} \|g\|_{L^\infty_N L^2_\rho}
  \label{eq:xbda} \end{equation}
for large $N$.

\end{lemma}
\begin{proof}
The equation~\eqref{eq:x} is equivalent to
\begin{equation}\label{eq:cont_eq}
\Big[-\Big(\pr_\tau-2\omega\xi\pr_\xi \Big)^2
+ \omega\Big(\pr_\tau-2\omega\xi\pr_\xi \Big)
-\xi\Big]\lambda^{-2}\rho^{\frac12}(\xi)\, x =
\lambda^{-2}\rho^{\frac12}(\xi)\, g
\end{equation}
We substitute the functions $(x,g)$ by $(y,h)$ where
$y = \rho^\frac12 x$ and $h= \rho^\frac12 g$. This has the effect
of removing the weight $\rho$ from the estimates.
The functions $(y,h)$
solve
\begin{equation}\label{eq:cont_eq'}
\Big[-\Big(\pr_\tau-2\omega\xi\pr_\xi\Big)^2 +\omega\Big(\pr_\tau-2\omega\xi\pr_\xi \Big)
-\xi\Big]\lambda^{-2}\, y = \lambda^{-2}\, h
\end{equation}
The characteristics of the homogeneous operator on the left are
$(\tau,\lambda^{-2}(\tau)\xi_0)$ which means that
\[
(\pr_\tau-2\omega\,\xi\pr_\xi)f(\tau,\xi)=
\frac{d}{d\tau}f(\tau,\xi(\tau)), \qquad
\xi(\tau)=\lambda^{-2}(\tau)\xi_0
\]
Hence, we are reduced  to solving the ODE
\begin{equation}
\label{eq:inhom} \Big[-\pr_\tau^2 + \omega(\tau) \pr_\tau
-\lambda^{-2}(\tau)\xi_0\Big]\lambda^{-2}\,
y(\tau,\xi(\tau))=\lambda^{-2}\, h (\tau,\xi(\tau))
\end{equation}
with $\xi_0>0$ fixed.  The homogeneous equation has exact solutions
\[
\Big[-\pr_\tau^2 + \omega(\tau)\pr_\tau
-\lambda^{-2}(\tau)\xi_0\Big]  e^{\pm i
\xi_0^{\frac12}\int_\tau^\infty \lambda^{-1}(\sigma)\,d\sigma}=0
\]
This is no surprise since the equation \eqref{eq:x} is equivalent
to the constant coefficient wave equation in the $t,r$ coordinates.

Since the Wronskian
\[
W\Big( e^{i \xi_0^{\frac12}\int_\tau^\infty
\lambda^{-1}(\sigma)\,d\sigma},  e^{- i
\xi_0^{\frac12}\int_\tau^\infty
\lambda^{-1}(\sigma)\,d\sigma}\Big)=2i\xi_0^{\frac12}\lambda^{-1}(\tau),
\]
it follows that  the backward solution to \eqref{eq:cont_eq'} has the form
\[
y(\tau,\xi_0)=\xi_0^{-\frac12}\int_\tau^\infty
\frac{\lambda^2(\tau)}{\lambda(\sigma)}\sin\Big(
\xi_0^{\frac12}\int_\tau^\sigma \lambda^{-1}(u)\,du \Big)\,
h(\sigma,\xi(\sigma))\, d\sigma
\]
Define the forward Green function
\[
 U(\tau,\sigma;\xi):= \xi^{-\frac12}
\frac{\lambda(\tau)}{\lambda(\sigma)}\sin\Big(
\xi^{\frac12}\lambda(\tau)\int_\tau^\sigma \lambda^{-1}(u)\,du \Big)
\]
Since $\xi_0=\lambda^2(\tau)\xi$,
$\xi(\sigma)=\xi\lambda^2(\tau)\lambda^{-2}(\sigma)$, we can write
\[
 y(\tau,\xi) = \int_\tau^\infty
U(\tau,\sigma;\xi)\,  h(\sigma,\xi(\sigma))\, d\sigma
\]
To estimate $D_\tau y$ it is also convenient to evaluate directly
\begin{equation}
D_\tau U(\tau,\sigma;\xi) = \frac{\lambda(\tau)}{\lambda(\sigma)}\cos\Big(
\xi^{\frac12}\lambda(\tau)\int_\tau^\sigma \lambda^{-1}(u)\,du \Big)
\label{dtauy}\end{equation}

To estimate the solution $y$ we either bound $|\sin(v)|\le |v|$ or
$|\sin(v)|\le1$.  Using that
\[
\lambda(\tau)\int_\tau^\infty \lambda^{-1}(u)\,du \les
\omega^{-1}(\tau)
\]
one obtains
\begin{equation}
|U(\tau,\sigma;\xi)|\lesssim \omega^{-1}(\tau)
\frac{\lambda(\tau)}{\lambda(\sigma)}
\label{fUbd}\end{equation}
as well as
\begin{equation}
\xi^\frac12 |U(\tau,\sigma;\xi)| +
|D_\tau U(\tau,\sigma;\xi)| \lesssim
\frac{\lambda(\tau)}{\lambda(\sigma)}
\label{dUbd}\end{equation}

We denote
\begin{equation}
z(\tau,\xi) = \left(\omega(\tau) y(\tau,\xi), \xi^\frac12 y(\tau,\xi),
D_\tau y(\tau,\xi)\right)
\label{zdef}\end{equation}
An immediate consequence of  \eqref{fUbd} and \eqref{dUbd}
is the estimate
\begin{equation}
\lambda^{-1}(\tau)  |z(\tau,\xi(\tau))|
  \lesssim \int_\tau^\infty \lambda^{-1}(\sigma) |h(\sigma,\xi(\sigma))| \,d\sigma
\label{pointest}
\end{equation}
From this we need to conclude that the following four bounds hold:
\begin{equation}
\| z\|_{L^\infty_{N-1} L^2 }
\lesssim \frac{1}{{N}} \|h\|_{L^\infty_{N} L^2}
\label{zh1}\end{equation}
\begin{equation}
\| z\|_{l^\infty_{N-1} L^2_{1/m}}
\lesssim \frac{1}{\sqrt{N}} \|h\|_{L^\infty_{N} L^2}
\label{zh2}\end{equation}
\begin{equation}
\| z\|_{L^\infty_{N-1} L^2 }
\lesssim \|h\|_{l^\infty_{N-\frac{1}{\beta+1}} L^2_m}
\label{zh3}\end{equation}
respectively
\begin{equation}
\| z\|_{l^\infty_{N-1} L^2_{1/m}}
\lesssim \|h\|_{l^\infty_{N-\frac{1}{\beta+1}} L^2_m}
\label{zh4}\end{equation}

Taking $L^2$ norms in $\xi$ on both sides of \eqref{pointest}
we obtain
\begin{equation*}
\|z(\tau)\|_{L^2} \lesssim
\int_\tau^\infty \|h(\sigma)\|_{L^2}\,d\sigma
\end{equation*}
which leads directly to \eqref{zh1}.

Adding flow invariant weights to the above bounds we get
\[
 \| z(\tau)\|_{L^2_{1/m}} \lesssim
\int_\tau^\infty \left\|m^{-1}\left(\frac{\xi
      \lambda(\tau)}{\lambda(\sigma)}\right) h(\sigma)\right\|_{L^2}\,d\sigma
\]
and by Cauchy-Schwarz
\[
 \| z(\tau)\|_{L^2_{1/m}}^2  \lesssim
\frac1N \int_\tau^\infty
\frac{\sigma^N}{\tau^{N-1}} \left\|m^{-1}\left(\frac{\xi
      \lambda(\tau)}{\lambda(\sigma)}\right) h(\sigma)\right\|_{L^2}^2\,d\sigma
\]
Hence
\[
\begin{split}
\int_{\tau_1}^{2\tau_1} &\tau^{2(N-1)-\frac{\beta}{\beta+1}}
\|z(\tau)\|_{L^2_{1/m}}^{2}
d\tau \\ \lesssim &\
\frac1N \int_{\tau_1}^{2\tau_1} \int_{\tau}^\infty
\int_0^\infty \tau^{2(N-1)-\frac{\beta}{\beta+1}}
\frac{\sigma^N}{\tau^{N-1}} m^{-2}\left(\frac{\xi
      \lambda(\tau)}{\lambda(\sigma)}\right) |h(\sigma,\xi)|^2 \,d\xi d\sigma d\tau
 \\ \lesssim &\ \frac1N
\int_{\tau_1}^\infty \sigma^N \|h(\sigma)\|_{L^2}^2 \sup_{\xi > 0}\left(
\int_{\tau_1}^{\min\{\sigma,2\tau_1\}}
 \tau^{N-1-\frac{\beta}{\beta+1}}
 m^{-2}\left(\frac{\xi
      \lambda(\tau)}{\lambda(\sigma)}\right)
 d\tau\right)\, d\sigma
\\ \lesssim &\ \frac{M}{N} \int_{\tau_1}^\infty  \sigma^N
\min\{\sigma,2\tau_1\}^{N-1}
\|h(\sigma)\|_{L^2}^2 \,d\sigma
\\ \lesssim &\ \frac{M}{N}  \| h\|_{L^\infty_N L^2}^2
\end{split}
\]
where
\[
\begin{split}
M =
\sup_{\xi > 0}  \int_{0}^{\infty}
 \tau^{-\frac{\beta}{\beta+1}}
 m^{-2}(\xi \lambda(\tau)) d\tau \approx \sup_{\xi > 0}  \int_{0}^{\infty}
 m^{-2}(\xi \lambda) \frac{d\lambda}{\lambda} \approx 1
\end{split}
\]
This concludes the proof of \eqref{zh2}.

We now turn our attention to \eqref{zh3}, for which we need to take
$h \in l^\infty_{N-\frac{1}{1+\beta}} L^2_{m}$. From
\eqref{pointest} by Cauchy-Schwarz we obtain
\[
\lambda^{-2}(\tau)|z(\tau,\xi(\tau))|^2 \lesssim \!
 \int_\tau^\infty \!\! \lambda^{-2}(\sigma) m(\xi(\sigma))
  \omega^{-1}(\sigma) |h(\sigma,\xi(\sigma))|\, d\sigma \cdot
 \int_\tau^\infty\!\! \! \omega(\sigma) m^{-1}(\xi(\sigma)) \, d\sigma
\]
The second integral has size $O(1)$, therefore
\begin{equation}
\lambda^{-2}(\tau)|z(\tau,\xi(\tau))|^2 \lesssim
 \int_\tau^\infty \lambda^{-2}(\sigma) m(\xi(\sigma))
  \omega^{-1}(\sigma) |h(\sigma,\xi(\sigma))| \, d\sigma
\label{zhuse}\end{equation}
 Hence integrating
with respect to $\xi$ to obtain
\[
\| z(\tau)\|_{L^2}^2 \lesssim \int_{\tau}^\infty
\int_0^\infty m(\xi) \omega^{-1}(\sigma) |h(\sigma,\xi)|^2 \, d\xi
d\sigma
\]
This directly  implies that
\[
\begin{split}
\tau^{2(N-1)} \| z(\tau)\|_{L^2}^2  \lesssim
\| h\|_{l^2_{N-\frac1{\beta+1}}L^2_m}^2
\end{split}
\]
which gives \eqref{zh3}.

Finally, for \eqref{zh4}, from \eqref{zhuse} by using again
Cauchy-Schwarz we obtain
\[
\int_{0}^\infty \omega(\tau) \lambda^{-2}(\tau) |z(\tau,\xi(\tau))|^2 d\tau
\lesssim  \int_0^\infty \lambda^{-2}(\sigma) m(\xi(\sigma))
  \omega^{-1}(\sigma) |h(\sigma,\xi(\sigma))| \, d\sigma
\]
and integrating with respect to $\xi$,
\[
\int_{0}^\infty \omega(\tau) \| z(\tau)\|_{L^2_{1/m}}^2 d\tau
\lesssim
  \int_0^\infty
  \omega^{-1}(\sigma) \|h(\sigma)\|_{L^2_m}^2 \, d\sigma
\]
Since the equation \eqref{eq:inhom} is solved backward
in $\tau$, we can add any nondecreasing weight in the above
estimate. In particular we obtain
\[
\int_{\tau_1}^{2\tau_1}  \tau^{2(N-1)-\frac{\beta}{\beta+1}}
 \| z(\tau)\|_{L^2_{1/m}}^2 d\tau
\lesssim \int_{\tau_1}^\infty \min\{\sigma,2\tau_1\}^{2(N-1)+\frac{\beta}{\beta+1}}\|h(\sigma)\|_{L^2_m}^2 \, d\sigma
\]
Hence \eqref{zh4} follows.
\end{proof}

It remains to show that the right hand side terms in \eqref{xmain} are
perturbative. We solve the equation \eqref{xmain} iteratively
and seek a solution as the sum of the series
\begin{equation}
 \left[\begin{matrix} x_0 \\ x \end{matrix}\right]
= \left(\sum_{k=0}^\infty (TQ)^k \right) T  \left[\begin{matrix} g_0 \\  g \end{matrix}\right]
\label{series}\end{equation}
It remains to establish the convergence of the above series.
 By Lemmas~\ref{lem:x0_param},\ref{lem:xbd} the backward
solution operator $T$ for $P$, given by
\[
T = \left[\begin{matrix}
T_e & 0 \\ 0 & T_c
\end{matrix}\right],
\]
is bounded
\[
T: Y_0^N \times Y^N   \to X_0^N \times X^N
\]
Hence an easy way to establish the convergence of the
series in \eqref{series} would be to show that
\[
\|Q\|_{X_0^N \times X^N \to Y_0^N \times Y^N } \ll 1
\]
We can establish such a bound for certain
components of $Q$, but as a whole  $Q$  is not even bounded
in the above setting. Lacking this, a weaker but still sufficient
alternative would be to prove that
\[
\| T Q\|_{X_0^N \times X^N \to X_0^N \times X^N } < 1
\]
This is still not true, but we will establish a weaker bound,
namely
\begin{equation}
\| T Q\|_{X_0^N \times X^N \to X_0^N \times X^N } \lesssim 1
\label{tqbd}\end{equation}
This ensures that all the terms in the series in \eqref{series}
belong to $X_0^N \times X^N$. In order to ensure convergence
we will split $Q$ into two parts,
\[
Q = Q_{g} + Q_b
\]
The good component $Q_g$ contains most of $Q$ and satisfies a
favorable bound
\begin{equation}
\| T Q_g\|_{X_0^N \times X^N \to X_0^N \times X^N } \lesssim
\frac{1}{N}+ \tau_0^{-\delta}, \qquad \delta > 0
\label{tqgbd}\end{equation}
Here the constant on the right can be made arbitrarily small by
choosing $N$ and $\tau_0$ large enough. For the single
bad component $Q_b$ of $Q$ we cannot establish outright smallness.
However, we will show that for a large enough $n$ we have
\begin{equation}
\| (T Q_b)^n\|_{X_0^N \times X^N \to X_0^N \times X^N } \ll 1
\label{tqbbd}\end{equation}
Combining \eqref{tqgbd} and \eqref{tqbbd} it follows that
for  large enough  $N$ and $\tau_0$ we have
\[
\| (T Q)^n\|_{X_0^N \times X^N \to X_0^N \times X^N } \ll 1
\]
This ensures the convergence of the series in \eqref{series} in $X_0^N
\times X^N$.  Given the bounds in
Lemmas~\ref{lem:x0_param},\ref{lem:xbd}, the proof of
Proposition~\ref{propxg} is concluded. It remains to show that $Q$
admits a decomposition which satisfies \eqref{tqgbd} and
\eqref{tqbbd}.


We begin with the easiest part, namely
\[
Q_{ce} x_0 = \omega^2 \calR_{ce} x_0 -2 \omega \calK_{ce}
\partial_\tau x_0
\]
which will be included in $Q_g$. Since the kernel $K_{ce}$ is bounded
and rapidly decreasing at infinity we obtain
\[
\| 2 \omega \calK_{ce}
\partial_\tau x_0\|_{l^\infty_{N-\frac{1}{2(\beta+1)}} L^2_{\rho m}}
\lesssim \|\partial_\tau x_0\|_{L^\infty_{N- \frac{\beta}{\beta+1}}}
\]
which yields a $\tau^\frac{1}{2(\beta+1)}$ gain,
\begin{equation}
\| \omega \calK_{ce} \partial_\tau x_0\|_{Y^{N+\frac{1}{2(\beta+1)}}}
\lesssim \|x_0\|_{X^N_0}
\label{dx0}\end{equation}

For the second part $\omega^2 \calR_{ce} x_0$ of $Q_{ce} x_0$
 such a simple bound no longer suffices, and we need to use some
cancellations. The final result is somewhat similar to the one above,
in that it gains a power of $\tau$ provided that $\beta > 3/2$.

\begin{lemma}
The following estimate holds:
\begin{equation}
\| T \omega^2 \calR_{ce} x_0\|_{X^{N+\frac{2\beta -3}{2(\beta+1)}}}
\lesssim \|x_0\|_{X^N_0}
\end{equation}
\end{lemma}

\begin{proof}
Suppose that
\[
\|x_0\|_{X_0^N} =1
\]
  We denote $g = \omega^2 \calR_{ce} x_0$ and $x = T_c g$.  As before we
  also introduce the auxiliary variables $y= \rho^\frac12 x$ and $h =
  \rho^\frac12 g$. The kernel $R_{ce}$ of $ \calR_{ce}$ is bounded,
  rapidly decreasing at infinity and has symbol-like behavior at both
  $0$ and infinity.  Then for the function $h$ we directly estimate
\[
| \tau^{N-\frac{1}{\beta+1}} (1+ \xi)
h(\tau)|  + |\tau^{N+\frac{\beta-1}{\beta+1}} (1+ \xi)
(\partial_\tau - 2\omega \xi \partial_\xi)
h(\tau)| \lesssim \| x_0\|_{X_0^N} =1
\]
As in the proof of Lemma~\ref{lem:xbd} we have
\[
y(\tau,\xi(\tau)) = \int_{\tau}^\infty U(\tau,\sigma,\xi(\tau))
h(\sigma,\xi(\sigma)) \, d\sigma,
\]
where
\[
   U(\tau,\sigma,\xi) =\xi(\tau)^{-\frac12}
\frac{\lambda(\tau)}{\lambda(\sigma)}
\sin\left(\xi(\tau)^\frac12 \lambda(\tau)
\int_{\tau}^\sigma \lambda^{-1}(u) du \right)
\]
Hence for $y$ we use \eqref{fUbd} amd \eqref{dUbd} to  obtain the pointwise bound
\[
|y (\tau,\xi(\tau)) | \lesssim  \xi(\tau)^{-\frac12} \min\{
1,\xi(\tau)^{\frac12} \omega(\tau)^{-1}\} \int_{\tau}^\infty\frac{\lambda(\tau)}{\lambda(\sigma)}
\sigma^{-N+\frac{1}{\beta+1}} (1+\xi(\sigma))^{-1}
 \, d\sigma
\]
which we rewrite in the form
\begin{equation}
 \omega(\tau)|y (\tau,\xi) | \lesssim \xi^{-\frac12} \min\{
1,\xi^{\frac12} \omega(\tau)^{-1}\} \int_{\tau}^\infty
\frac{\lambda(\tau)}{\lambda(\sigma)}
\frac{\sigma^{-N-\frac{\beta-1}{\beta+1}}}{(1+\xi \lambda^2(\tau) \lambda^{-2}(\sigma))
} \, d\sigma
\label{cea}\end{equation}

To bound $\xi^\frac12 y$  we integrate by parts,
\[
\begin{split}
y(\tau,\xi(\tau)) = & \
 \xi(\tau)^{-1} \int_\tau^\infty
h(\sigma,\xi(\sigma))
\partial_\sigma
\left(1-\cos\left(\xi(\tau)^\frac12 \lambda(\tau) \int_{\tau}^\sigma
\lambda^{-1}(u) du \right)\right) \, d\sigma
\\
= &\   \xi(\tau)^{-1} \int_\tau^\infty
\left(\cos\left(\xi(\tau)^\frac12 \lambda(\tau) \int_{\tau}^\sigma\!\!
\lambda^{-1}(u) du \right)-1\right) \partial_\sigma
h(\sigma,\xi(\sigma))
\, d\sigma
\end{split}
\]
Estimating either $|1-\cos v| \lesssim 1$ or $|1-\cos v| \lesssim |v|$
this leads to a bound which is weaker than \eqref{cea}, namely
\begin{equation}
 \xi^\frac12 |y (\tau,\xi) | \lesssim  \xi^{-\frac12} \min\{
1,\xi^{\frac12} \omega(\tau)^{-1}\} \int_{\tau}^\infty
\frac{\sigma^{-N-\frac{\beta-1}{\beta+1}}}{(1+\xi \lambda^2(\tau) \lambda^{-2}(\sigma))
}\, d\sigma
\label{ceb}\end{equation}

In a similar manner we evaluate $D_\tau y$,
\[
\begin{split}
D_\tau y(\tau,\xi(\tau)) = & \
  \int_\tau^\infty\frac{\lambda(\tau)}{\lambda(\sigma)}
h(\sigma,\xi(\sigma))
\cos\left(\xi(\tau)^\frac12 \lambda(\tau) \int_{\tau}^\sigma
\lambda^{-1}(u) du \right)\, d\sigma
\\
= &\   \xi(\tau)^{-\frac12} \int_\tau^\infty
\sin\left(\xi(\tau)^\frac12 \lambda(\tau) \int_{\tau}^\sigma\!\!
\lambda^{-1}(u) du \right) \partial_\sigma
h(\sigma,\xi(\sigma))
\, d\sigma
\end{split}
\]
which leads to the same bound as in \eqref{ceb}. Summing up,
for $z$ as in \eqref{zdef} we obtain
\begin{equation}
 |z (\tau,\xi) | \lesssim  \xi^{-\frac12} \min\{
1,\xi^{\frac12} \omega(\tau)^{-1}\} \int_{\tau}^\infty
\frac{\sigma^{-N-\frac{\beta-1}{\beta+1}}}{(1+\xi \lambda^2(\tau) \lambda^{-2}(\sigma))
} \, d\sigma
\label{cec}\end{equation}
It remains to evaluate the integral on the right.  If $\xi < 2$ then
we can neglect the first factor in the denominator of the integrand
and evaluate
\[
\int_{\tau}^\infty
\frac{\sigma^{-N-\frac{\beta-1}{\beta+1}}}{(1+\xi \lambda^2(\tau) \lambda^{-2}(\sigma))}
 \, d\sigma \lesssim \tau^{-N+\frac{2}{\beta+1}}, \qquad
\xi \leq 2
\]
However, if $\xi > 2$ then this factor yields rapid decay when
\[
\xi \lambda^2(\tau) \lambda^{-2}(\sigma) > 1
\]
which corresponds to
\[
\sigma \lesssim \tau + (\ln \xi)^{\beta+1}
\]
Thus we obtain
\[
\int_{\tau}^\infty
\frac{\sigma^{-N-\frac{\beta-1}{\beta+1}}}{(1+\xi \lambda^2(\tau) \lambda^{-2}(\sigma))}\, d\sigma \lesssim
(\tau+ (\ln \xi)^{\beta+1})^{-N+\frac{2}{\beta+1}}, \qquad
\xi \geq 2
\]
Summing up, for $z$ we have obtained the pointwise bound
\[
|z (\tau,\xi) | \lesssim \left\{ \begin{array}{lc}
\tau^{-N+\frac{\beta+2}{\beta+1}} &  \xi < \omega^2(\tau)
\cr
\xi^{-\frac12} \tau^{-N+\frac{2}{\beta+1}} &  \omega^2(\tau) \leq \xi \leq 2
\cr
\xi^{-\frac12} (\tau+ (\ln \xi)^{\beta+1})^{-N+\frac{2}{\beta+1}} & \xi \geq 2
\end{array}
\right.
\]

This allows us to estimate $L^2$ norms, namely
\[
\| z(\tau,\xi)\|_{L^2_{1/m}} \lesssim   \tau^{-N+\frac{2}{\beta+1}}
\]
respectively
\[
\|z(\tau,\xi)\|_{L^2} \lesssim   \tau^{-N+\frac{5}{2(\beta+1)}}
\]
Finally we obtain
\[
\| z\|_{L^\infty_{N-\frac{5}{2(\beta+1)}} L^2
\cap l^\infty_{N-\frac{5}{2(\beta+1)}} L^2_{1/m}}
\lesssim 1
\]
and the conclusion of the lemma follows.

\end{proof}

Next, we turn to the term $\calQ_{ec} x$ given by
\[
\calQ_{ec}  x = \omega^2 \calR_{ec} x
-2\omega \calK_{ec} D_\tau x
\]
We will prove that $\calQ_{ec} x$ can also be included in $Q_g$.  The
kernel $R_{ec}(\xi)$ of $\calR_{ec}$ is bounded and decays rapidly at
infinity. Then the contribution of the first term is easy to estimate
using the $L^\infty_N L^2_\rho$ type bounds on $x$ and $\xi^\frac12 x$,
\begin{equation}
\| \omega^2 R_{ec} x\|_{Y^{N+\frac{\beta-1}{\beta+1}-\delta}_0}
\lesssim \|x\|_{X^N}, \qquad \delta > 0
\end{equation}
with $\delta$ arbitrarily small.
The bound for  the second term in $\calQ_{ec}  x$ is similar:

\begin{lemma}
For $\delta > 0$ we have
\begin{equation}
  \| T_0 \omega \calK_{ec} D_\tau x\|_{X^{N+\frac{\beta-1}{\beta+1} -\delta }_0 }
\lesssim \|x\|_{X^N}
\end{equation}
\end{lemma}
\begin{proof}
  Set $y = \rho^\frac12 x$. The solution $x_0 = T_0 \omega \calK_{ec}
  x$ is represented as
\[
\begin{split}
x_0(\tau) = &\ \int_\tau^\infty\int_0^\infty U_0(\tau,\sigma)  \omega K_{ec}(\xi)
D_\sigma x(\sigma,\xi) \, d\xi d\sigma
\\
= & \ \int_\tau^\infty \int_0^\infty U_0(\tau,\sigma)  \omega K_{ec}(\xi)
\left(\partial_\sigma - 2 \omega (\xi \partial_\xi + 1)\right) y(\sigma,\xi) \, d\xi d\sigma
\end{split}
\]
Integrating by parts we obtain
\[
x_0(\tau) =  \int_\tau^\infty\int_0^\infty  - \partial_\sigma U_0(\tau,\sigma)
\omega(\sigma) K_{ec}(\xi)  y(\sigma,\xi) + 2 U_0(\tau,\sigma)
\omega^2(\sigma)  y(\sigma,\xi) \xi \partial_\xi  K_{ec}(\xi)\, d\xi d\sigma
\]
Hence
\[
|x_0(\tau)| \leq \int_\tau^\infty\int_0^\infty  \omega(\sigma)
(1+\xi)^{-1} | x(\sigma,\xi)|\, d\xi d\sigma
\]
therefore
\[
\| \tau^{N-1-\frac1{\beta+1}-\delta} x_0\|_{L^2} \lesssim \|x\|_{X^N}
\]
A similar computation yields
\[
|\partial_\tau x_0(\tau)| \leq \int_\tau^\infty \int_0^\infty \omega^2(\sigma)
\left( \tau^{-\frac{1}{\beta+1}} + \frac{\lambda(\tau)}{\lambda(\sigma)}\right)
(1+\xi)^{-1} | x(\sigma,\xi)|\, d\xi d\sigma
\]
which leads to
\[
\| \tau^{N-\frac1{\beta+1}-\delta} \partial_\tau x_0\|_{L^2} \lesssim \|x\|_{X^N}
\]
The desired conclusion follows.

\end{proof}

Finally we consider the expression $Q_{cc} x$ which has the form
\begin{equation}
Q_{cc} x = \omega^2 \calR_{cc} x  - 2\omega \calK_{0}D_\tau
\end{equation}
The first term is better behaved and can be included in $Q_g$:

\begin{lemma}
For $\delta > 0$ we have the following bound:
\begin{equation}
\| \omega^2 \calR_{cc} x\|_{Y^{N+\frac{\beta-1}{\beta+1}-\delta}}
\lesssim \|x\|_{X^N}
\end{equation}
\end{lemma}
\begin{proof}
By the definition of the $X^N$ and $Y^N$ spaces, it suffices to show that
\[
\| \calR_{cc} x\|_{L^2_\rho} \lesssim \tau^\delta(\| \xi^\frac12 x\|_{L^2_\rho} +
\tau^{-\frac{\beta}{\beta+1}} \| x\|_{L^2_\rho})
\]
This in turn follows by duality and dyadic summation from the
bound
\begin{equation}
\|\chi_{[0,h)}\calR_{cc}^* f\|_{L^2_\rho} \lesssim \min(h^{\frac12-\delta},1)\|f\|_{L^2_\rho}
\end{equation}
For this we need to prove that the operator $\calR_{cc}^*$ is
quasi-smoothing according to the following definition:

\begin{defi}
  \label{def:smoothing} A bounded operator $T:L^2_\rho(\R^+)\to
  L^2_\rho(\R^+)$ is {\em quasi-smoothing} if for each $\delta > 0$
  there exists $C_\delta > 0 $ so that
  \begin{equation}\label{eq:smooth}
  \begin{aligned}
\|\chi_{[0,h)}\, Tf\|_{L_\rho^2} &\le C_\delta \,\min(h^{\frac12-\delta},1)\|f\|_{L^2_\rho}\\
\end{aligned}
  \end{equation}
  for all $h>0$.
\end{defi}

We remark that the quasi-smoothing operators form an ideal under
composition from the right within the algebra of bounded operators.
Hence, given the expression of $\calR_{cc}$, it suffices to show that
the following operators are quasi-smoothing:
\[
\calK_0, \quad [\xi \partial_\xi, \calK_0], \quad \calK_{ce} \calK_{ec}, \quad J_{cc}
\]

  Recall that
  \[
\calK_0 f(\xi) = \int_0^\infty
\frac{\rho(\xi)F(\xi,\eta)}{\xi-\eta}f(\eta)\,d\eta
  \]
where \[ |F(\xi,\eta)|\les
\min\big[\xi+\eta,(\xi+\eta)^{-\frac52}(1+|\xi^{\frac12}-\eta^{\frac12}|)^{-N}\big]
\] Let $ F_1(\xi,\eta):=\rho(\xi)F(\xi,\eta)$. Then
\[
\calK_0 f(\xi)= \int_0^\infty \frac{
F_1(\xi,\eta)}{\xi-\eta}\chi_{[\frac{\xi}{\eta}\in[\frac12,2]]}\,f(\eta)\,d\eta
+\int_0^\infty \frac{
F_1(\xi,\eta)}{\xi-\eta}\chi_{[\frac{\xi}{\eta}\not\in[\frac12,2]]}\,f(\eta)\,d\eta
\]
For the first operator on the right-hand side one has
\begin{equation}\label{eq:F1_bd}
|F_1(\xi,\eta)| \chi_{[\frac{\xi}{\eta}\in[\frac12,2]]} \le
\min(\xi,1) \end{equation} which implies that the corresponding
operator is quasi-smoothing, see the proof of $L^2_\rho$ boundedness
of $\calK_0$ in the previous section. For the second operator,
observe that
\[
\frac{|
F_1(\xi,\eta)|}{|\xi-\eta|}\chi_{[\frac{\xi}{\eta}\not\in[\frac12,2]]}
\les \min(1,(\xi+\eta)^{-N})
\]
by the rapid off-diagonal decay of $F$. Hence,
\[
\sup_{\xi\ge0}\Big| \int_0^\infty \frac{
F_1(\xi,\eta)}{\xi-\eta}\chi_{[\frac{\xi}{\eta}\not\in[\frac12,2]]}\,f(\eta)\,d\eta
\Big|\les \|f\|_{L^2_\rho}
\]
and therefore
\[
\Big\|\chi_{[0,h)} \int_0^\infty \frac{
F_1(\xi,\eta)}{\xi-\eta}\chi_{[\frac{\xi}{\eta}\not\in[\frac12,2]]}\,f(\eta)\,d\eta
\Big\|_{L^2}\les h^{\frac12}\|f\|_{L^2_\rho}
\]
as desired.

For the commutator $[\xi \partial_\xi, \calK_0]$ we have
\[
[\xi\pr_\xi,\calK_0]f(\xi)=\int_0^\infty\frac{(\xi\pr_\xi+\eta\pr_\eta)
F_1(\xi,\eta)}{\xi-\eta}\,f(\eta)\,d\eta
\]
and one argues as before.

The kernel of operator $\calK_{ce} \calK_{ec}$ is $\rho(\xi) K_e(\xi) K_e(\eta)$,
and is bounded by  $(1+\xi)^{-n}(1+\eta)^{-n}$. The  quasi-smoothing property easily
follows. Finally $\calJ_{cc}$ is  quasi-smoothing due to the kernel bound
\eqref{jcc}.

\end{proof}

It remains to consider the second part of $\calQ_{cc}$
namely the expression $\omega \calK_{0}D_\tau x$.
Since the kernel for $\calK_0$ decays at $0$ and at infinity,
it is easy to establish the bound
\[
\| \calK_0\|_{L^2_{\rho/m} \to L^2_{\rho m}} \lesssim 1
\]
It follows that
\begin{equation}
\| \omega \calK_0 D_\tau x\|_{Y^N} \lesssim \|x\|_{X^N}
\label{jf}\end{equation}
The difficulty is that there is no smallness in the above relation,
and it is not possible to gain any smallness by letting $\tau$ be
large enough. To deal with this we reiterate:

\begin{lemma}
Suppose that $n$ is large enough. Then
\begin{equation}
\| (T \omega \calK_0 D_\tau)^n x\|_{X^N} \ll \|x\|_{X^N}.
\label{jf1}\end{equation}
\end{lemma}

\begin{proof}
By \eqref{jf} and Lemma~\ref{lem:xbd}
it suffices to prove that for large enough $n$,
\begin{equation}
 \|( \omega \calK_0 D_\tau T)^n g\|_{l^\infty_N L^2_{\rho m}} \ll
\|g\|_{l^\infty_N L^2_{\rho m}}
\label{jf2}\end{equation}
We divide the operator $ \calK_0$ in two parts,
\[
\calK_0 = \calK_0^{d} +  \calK_0^{nd}
\]
with kernels
\[
K_0^{d} (\xi,\eta) = K_0(\xi,\eta) \chi_{[|\frac{\xi}{\eta}-1|<\frac{1}{n}]}
,\qquad
K_0^{nd} (\xi,\eta) = K_0(\xi,\eta) \chi_{[|\frac{\xi}{\eta}-1|> \frac{1}{n}]}
\]

The contribution of $\calK_0^{nd}$ is non-resonant,  we
and we expect to gain powers of $\tau$ from oscillations.
Precisely, we will prove that
\begin{equation}
\| ( \omega \calK_0 D_\tau T) ( \omega \calK_0^{nd} D_\tau T)
g\|_{l^\infty_{N+\frac{\beta-2}{\beta+1}} L^2_{\rho m}} \lesssim_n
\|g\|_{l^\infty_N L^2_{\rho m}}
\label{jf3}\end{equation}
Here the implicit constant depends on $n$, but that is not important
since we gain a power of $\tau$.

Assuming \eqref{jf3} holds, in order to prove \eqref{jf2} it
remains to show that for large $n$ we have
\begin{equation}
 \|( \omega \calK_0^{d} D_\tau T)^n g\|_{l^\infty_N L^2_{\rho m}} \ll
\|g\|_{l^\infty_N L^2_{\rho m}}
\label{jf4}\end{equation}

{\bf Proof of \eqref{jf4}:} For another small parameter $\epsilon$ to
be chosen later we further divide $ \calK_0^{d}$ into three parts,
\[
\calK_0^{d} = \calK_{0,1}^{d,\epsilon} +  \calK_{0,2}^{d,\epsilon}
+ \calK_{0,3}^{d,\epsilon}
\]
with kernels
\[
K_{0,1}^{d,\epsilon}(\xi,\eta) = 1_{\xi < \epsilon} K_0^{d}(\xi,\eta),
\qquad
K_{0,3}^{d,\epsilon}(\xi,\eta) = 1_{\xi > \epsilon^{-1}} K_0^{d}(\xi,\eta)
\]
The center part $ \calK_{0,2}^{d,\epsilon}$ enjoys better
localization, while the two tails $ \calK_{0,1}^{d,\epsilon}$ and $\calK_{
0,3}^{d,\epsilon}$ are small. Precisely,
\begin{equation}
\| \omega \calK_{0,1}^{d,\epsilon} D_\tau T g\|_{l^\infty_N L^2_{\rho m}} +
\| \omega \calK_{0,3}^{d,\epsilon} D_\tau T g\|_{l^\infty_N L^2_{\rho m}}\lesssim
\epsilon^\frac14 \|g\|_{l^\infty_N L^2_{\rho m}}
\label{jf5}\end{equation}

It is easy to see that due to the supports of the kernels we have
\[
\calK_{0,1}^{d,\epsilon} D_\tau T  \calK_{0,2}^{d,\epsilon(1+\frac1n)}
= 0
\]
and
\[
 \calK_{0,2}^{d,\epsilon(1+\frac1n)}  D_\tau T  \calK_{0,3}^{d,\epsilon(1+\frac1n)}
= 0
\]
Hence we obtain the decomposition
\[
\begin{split}
& ( \omega \calK_0^{d} D_\tau T)^n =
 ( \omega \calK_{0,2}^{d,\epsilon} D_\tau T)^n
\\
& + \sum_{k=1}^n  ( \omega \calK_{0,2}^{d,\epsilon} D_\tau T)^{k-1}
 ( \omega \calK_{0,1}^{d,\epsilon} D_\tau T)
( \omega \calK_{0,1}^{d,2\epsilon} D_\tau T)^{n-k}
\\
& + \sum_{j=1}^n  ( \omega \calK_{0,3}^{d,2\epsilon} D_\tau T)^{j-1}
 ( \omega \calK_{0,3}^{d,\epsilon} D_\tau T)
 ( \omega \calK_{0,2}^{d,\epsilon} D_\tau T)^{n-j}
\\
& + \!\!\!\! \sum_{1 \leq j < k \leq n} \!\!\!\!\!\!
  ( \omega \calK_{0,3}^{d,2\epsilon} D_\tau T)^{j-1} \!
 ( \omega \calK_{0,3}^{d,\epsilon} D_\tau T)
 ( \omega \calK_{0,2}^{d,\epsilon} D_\tau T)^{k-j-1} \!
 ( \omega \calK_{0,1}^{d,\epsilon} D_\tau T)
( \omega \calK_{0,1}^{d,2\epsilon} D_\tau T)^{n-k}
\end{split}
\]
For the middle part we will prove the bound
\begin{equation}
 \|( \omega \calK_{0,2}^{d,\epsilon} D_\tau T)^k g\|_{l^\infty_N L^2_{\rho m}} \leq
\frac{(C|\log \epsilon|)^k}{(k-1)!}
\|g\|_{l^\infty_N L^2_{\rho m}}
\label{jf6}\end{equation}
Combining this with \eqref{jf5} we obtain
\[
\begin{split}
\|( \omega \calK_0^{d} D_\tau T)^n g\|_{l^\infty_N L^2_{\rho m}} \leq &\
\left(\sum_{k=0}^n \frac{(C|\log \epsilon|)^k}{(k-1)!}
\epsilon^{\frac{n-k}4} \right)
\|g\|_{l^\infty_N L^2_{\rho m}}
\end{split}
\]
Choosing $\epsilon = n^{-4}$ this gives
\[
\|( \omega \calK_0^{d} D_\tau T)^n g\|_{l^\infty_N L^2_{\rho m}} \leq \frac{(C \log n)^n}{n^{n-2}}
\|g\|_{l^\infty_N L^2_{\rho m}}
\]
for a new constant $C$.  Thus \eqref{jf4} is established for $n$
sufficiently large.

We return to prove \eqref{jf6}. Since
\[
D_\tau U(\tau,\sigma,\xi) := V(\tau,\sigma,\xi) =
\frac{\lambda(\tau)}{\lambda(\sigma)}\cos\left(\xi^\frac12 \lambda(\tau) \int_{\tau}^\sigma
  \lambda^{-1}(\tau) \right)
\]
we can write the function
\[
y(\tau,\xi) =  \rho(\xi)^\frac12
 ( \omega \calK_{0,2}^{d,\epsilon} D_\tau T)^n g
\]
in the form
\[
\begin{split}
y(\tau,\xi) =&\ \int_{\tau}^\infty\int_{\R^+} d \sigma_1 d \eta_0
\omega \calK_{0,2}^{d,\epsilon}(\xi,\eta_0) V(\tau,\sigma_1,\eta_0)
\\ &\
\int_{\sigma_{1}}^\infty \int_{\R^+} d \sigma_2 d \eta_1
\omega \calK_{0,2}^{d,\epsilon}
\left(\eta_{0}
\frac{\lambda^2(\sigma_{0})}{\lambda^2(\sigma_{1})},\eta_{1}\right)
V(\sigma_{1},\sigma_2,\eta_{1}) \cdots
 \\ &\
 \int_{\sigma_{n-1}}^\infty \int_{\R^+} d \sigma_n d \eta_n
 \omega \calK_{0,2}^{d,\epsilon}
 \left(\eta_{n-2}
 \frac{\lambda^2(\sigma_{n-2})}{\lambda^2(\sigma_{n-1})},\eta_{n-1}\right)
 V(\sigma_{n-1},\sigma_n,\eta_{n-1})
 \\
 &\ \int_{\R^+} d\eta_n \omega \calK_{0,2}^{d,\epsilon}
 \left(\eta_{n-1}
 \frac{\lambda^2(\sigma_{n-1})}{\lambda^2(\sigma_{n})},\eta_{n}\right)
   h(\sigma_n, \eta_{n}) \frac{\lambda(\tau)}{\lambda(\sigma_m)}
\end{split}
\]
In order for the above integrand to be nonzero we must have
\[
\left|\frac{ \eta_k \lambda^2(\sigma_k)}{ \eta_{k+1}
    \lambda^2(\sigma_{k+1})}-1\right| \leq \frac1n, \qquad
\left|\frac{\xi}{\eta_0}-1\right| \leq \frac1n.
\]
This implies that
\[
\eta_n \lambda^2(\sigma_n) \leq 3 \xi \lambda^2(\tau)
\]
Since $\epsilon < \sigma_n,\xi \leq \epsilon^{-1}$ it follows that
\[
 \lambda^2(\sigma_n) \leq 3 \epsilon^2 \lambda^2(\tau)
\]
If $\tau$ is sufficiently large this implies that
\[
\sigma_n \leq \sigma(\tau) =
\tau + C \tau^\frac{\beta}{\beta+1} |\log \epsilon|^{{\beta+1}}
\]
Using the $L^2$ boundedness of $\calK_{0,2}^{d,\epsilon}$
and of the transport along the flow (as $|V| \leq 1$) it follows that
\[
\| y(\tau)\|_{L^2_{\rho m}} \leq m^2(\epsilon)(C\omega(\tau))^{n+1} \int_{\tau}^{\sigma(\tau)} d \sigma_1
\int_{ \sigma_1}^{\sigma(\tau)} d \sigma_2 \cdots
\int_{\sigma_{n-1}}^{\sigma(\tau)}    \| h(\sigma_n)\|_{L^2_{\rho/m}} \, d\sigma_n
\]
Changing the order of integration this yields
\[
\| y(\tau)\|_{L^2_{\rho m}} \leq m^2(\epsilon)
\frac{(C\omega(\tau))^n}{(n-1)!} \int_{\tau}^{\sigma(\tau)}
 (\tau-\sigma_n)^{n-1} \| h(\sigma_n)\|_{L^2_{\rho/m}}\, d\sigma_n
\]
Since
\[
\int_{\tau}^{\sigma(\tau)}  (\tau-\sigma_n)^{n-1} \, d\sigma^n \approx
\frac{1}n \omega(\tau)^{-n}
\]
we finally obtain
\[
\| y\|_{l^\infty_{N+\frac{\beta}{\beta+1}} L^2_{\rho m}} \leq m^2(\epsilon)
\frac{(C|\log \epsilon|^{\beta+1})^n}{n!}
  \| h\|_{l^\infty_N L^2_{\rho/m} }
\]
Thus \eqref{jf6} is proved.

{\bf Proof of \eqref{jf3}:} Denoting $x = T ( \omega \calK_0^{nd}
D_\tau T) g$, $y = \rho^\frac12 x$ and $h = \rho^\frac12 g$ we
need to prove that
\begin{equation}
\| D_\tau y\|_{l^\infty_{N-\frac{2}{\beta+1}} L^2_{1/m}} \lesssim
\| h\|_{l^\infty_{N} L^2_m}
\end{equation}
Due to the formula \eqref{dtauy} we
have the integral representation

\[
\begin{split}
  D_\tau y(\tau,\xi) = &\ \int_\tau^\infty \omega(s)
  \frac{\lambda(\tau)}{\lambda(s)} \cos\left(\xi^\frac12 \lambda(\tau)
    \int_{\tau}^s \lambda^{-1}(\theta ) d\theta \right) \int_0^\infty
  K_0^{nd} (\xi(s),\eta(s)) \frac{\lambda^2(\sigma)}{\lambda^2(s)} \\
  &\int_s^\infty \frac{\lambda(s)}{\lambda(\sigma)}
  \cos\left(\eta^\frac12 \lambda(\sigma) \int_{s}^\sigma
    \lambda^{-1}(\theta) d\theta \right) y(\sigma,\eta) d \sigma d
  \eta ds
\end{split}
\]
where $\xi(s) = \xi \frac{\lambda^2(\tau)}{\lambda^2(s)}$ and
$\eta(s)= \eta \frac{\lambda^2(\sigma)}{\lambda^2(s)}$.  In the
support of the kernel $K_0^{nd}$ we have $ \left|
  \frac{\xi(s)}{\eta(s)} - 1\right| > \frac1n $ therefore $ \left|
  \frac{\xi^\frac12 \lambda(\tau)}{\eta^\frac12 \lambda(\sigma)} -
  1\right| \gtrsim \frac1n $.  Thus the two oscillatory factors have
different frequencies, and we can gain if we integrate by parts with
respect to $s$. Denoting
\[
u(s) =\xi^\frac12 \lambda(\tau) \int_{\tau}^s \lambda^{-1}(\theta ) d\theta,
\qquad
v(s) = \eta^\frac12 \lambda(\sigma) \int_{s}^\sigma \lambda^{-1}(\theta) d\theta
\]
 we write
\[
2 \cos u(s) \cos v(s) = \cos(u(s)+v(s))+ \cos(u(s)-v(s))
\]

We change the order of integration in the above expression for $y$
and integrate by parts with respect to $s$.
Since
\[
\frac{d}{ds}(u\pm v) = \lambda^{-1}(s)(\xi^\frac12 \lambda(\tau) \mp
\eta^\frac12 \lambda(\sigma)) = \xi(s)^\frac12 \mp \eta(s)^\frac12
\]
we integrate the cosine and differentiate the rest to obtain
\[
\begin{split}
D_\tau y(\tau,\xi) = &  \sum_{\pm} \!
\int_\tau^\infty  \!\!\! \int_0^\infty \!\! \int_{\tau}^\sigma \!
 \frac{\lambda(\tau)}{\lambda(\sigma)}  \frac1{\xi(s)^\frac12  \mp
   \eta(s)^\frac12} \frac{1}{\lambda(s)}
\frac{d}{ds} \left( \omega(s) K_0^{nd} (\xi(s),\eta(s))
\frac{\lambda^2(\sigma)}{\lambda(s)}\right) \\ & \  \sin(u(s)+v(s))
h(\sigma,\eta) \, ds d\eta d\sigma
\\
& \!\! \pm \! \int_\tau^\infty \!\!  \int_0^\infty \!\! \omega(\tau) \frac{\lambda(\tau)}{\lambda(\sigma)} \frac1{\xi^\frac12 \mp \eta(\tau)^\frac12}
 K_0^{nd} (\xi,\eta(\tau))
\frac{\lambda^2(\sigma)}{\lambda^2(\tau)}   \sin (v(\tau))
h(\sigma,\eta)  \,  d\eta d\sigma
\\
&\!\!  - \int_\tau^\infty\!\! \int_0^\infty \omega(\sigma) \frac{\lambda(\tau)}{\lambda(\sigma)} \frac1{\xi(\sigma)^\frac12  \mp \eta^\frac12 }
 K_0^{nd} (\xi(\sigma),\eta)  \sin (u(\sigma))
h(\sigma,\eta) \,  d\eta d\sigma
\end{split}
\]
We have
\[
\frac{d}{ds} \left( K_0^{nd} (\xi(s),\eta(s)) \lambda^{-1}(s)\right)
= \omega(s) ( \xi \partial_\xi + \eta
  \partial_\eta -1) K_0^{nd} (\xi(s),\eta(s)) \lambda^{-1}(s)
\]
Due to Theorem~\ref{tp} the kernel $K^{nd}_0$
is bounded and decays rapidly at infinity therefore we can bound it by
\[
|K^{nd}_0(\xi,\eta)| \lesssim_n \frac1{(1+\xi)(1+\eta)}
\]
We also have
\[
| ( \xi \partial_\xi + \eta
  \partial_\eta -1) K^{nd}_0(\xi,\eta)| \lesssim_n \frac1{(1+\xi)(1+\eta)}
\]
Hence the following rough bounds are valid:
\[
\left| \frac{K^{nd}_0(\xi,\eta)}{\xi^\frac12 \pm \eta^\frac12}\right|
+ \left| \frac{( \xi \partial_\xi + \eta
  \partial_\eta -1) K^{nd}_0(\xi,\eta)}{\xi^\frac12 \pm \eta^\frac12}\right|
\lesssim_n \frac{1}{\xi^\frac12 \eta^\frac12}
\]
Inserting this in the bounds for $D_\tau y$ we obtain
\[
\begin{split}
  |D_\tau y(\tau,\xi)| \lesssim_n &\ \int_\tau^\infty \int_0^\infty
  \int_{\tau}^\sigma \omega^2(s) \frac{\lambda(\tau)}{\lambda(\sigma)}
  \frac{1}{\xi(s)^\frac12 \eta(s)^\frac12}
  \frac{\lambda^2(\sigma)}{\lambda^2(s)} |h(\sigma,\eta)|\, ds d\eta
  d\sigma
  \\
  &\ + \int_\tau^\infty \int_0^\infty \omega(\tau)
  \frac{\lambda(\tau)}{\lambda(\sigma)} \frac{1}{\xi^\frac12
    \eta(\tau)^\frac12} \frac{\lambda^2(\sigma)}{\lambda^2(\tau)}
  |h(\sigma,\eta)| \, d\eta d\sigma
  \\
  &\ + \int_\tau^\infty \int_0^\infty \omega(\sigma)
  \frac{\lambda(\tau)}{\lambda(\sigma)}
  \frac{1}{\xi(\sigma)^\frac12 \eta^\frac12} |h(\sigma,\eta)|\, d\eta d\sigma
\end{split}
\]
This can be rewritten in the form
\[
\begin{split}
\xi^\frac12  |D_\tau y(\tau,\xi)| \lesssim_n &\
 \int_\tau^\infty \int_0^\infty
  \int_{\tau}^\sigma \omega^2(s) \frac{|h(\sigma,\eta)|}{\eta^\frac12} \, ds d\eta
  d\sigma
  + \int_\tau^\infty \int_0^\infty \omega(\tau)
  \frac{|h(\sigma,\eta)|}{\eta^\frac12}\,  d\eta d\sigma
\\  \lesssim_n &\
 \omega^2(\tau) \int_\tau^\infty \int_0^\infty
  \sigma  \frac{|h(\sigma,\eta)|}{\eta^\frac12}  \, d\eta
  d\sigma
\end{split}
\]
Taking weighted $L^2$ norms we obtain
\[
\|D_\tau y(\tau,\xi)\|_{L^2_{1/m}}   \lesssim_n
 \omega^2(\tau) \int_\tau^\infty
  \sigma  \|h(\sigma)\|_{L^2_m}
  \, d\sigma
\]
Then by Cauchy-Schwarz
\[
\|D_\tau y(\tau)\|_{L^2_{1/m}}   \lesssim_n
 \tau^{-N+ \frac{3}{2(\beta+1)} }  \|h\|_{l^\infty_{N} L^2_m}
\]
and further
\[
\|D_\tau y\|_{l^\infty_{N-\frac{2}{\beta+1}}   L^2_{1/m}} \lesssim_n
 \|h\|_{l^\infty_{N} L^2_m}
\]
Thus \eqref{jf5} is proved, and the proof of the lemma is concluded.
\end{proof}

Proposition~\ref{propxg} follows.
\end{proof}

The proof of Proposition~\ref{epsmain} is also concluded.
\end{proof}

We now turn to the proof of Theorem~\ref{tlin} in
Section~\ref{sec:mainproof} which estimates forward solutions $\eps$
of the equation~\eqref{eqlin}, which we rewrite as
\[
P_0  \epsilon = f,
\qquad
P_0 = -\partial_{t}^{2}+\partial_{r}^{2}+\frac{1}{r}\partial_{r}+\frac{2}{r^{2}}
    (1-3Q(\lambda(t)r)^2 -6 Q(\lambda(t)r) v_{10})
\]
under the action of the invariant vector field $S=t\pr_t+r\pr_r$.
So far we have proved the bound \eqref{kbd0} for $\epsilon$.
In order to prove \eqref{kbd1} we write an equation for $S \epsilon$,
namely
\[
P_0 S \epsilon = S f + [P_0,S] \epsilon
\]
A direct computation yields
\[
[P_0,S] = 2P_0 - V, \qquad V = \frac{1}{r^2} S( 3Q(\lambda(t)r)^2 -6 Q(\lambda(t)r) v_{10})
\]
Hence
\[
P_0 S \epsilon = (S+2) f + V \epsilon
\]
A direct computation shows that
\[
|V| \lesssim \frac{1}{r^2} \frac{R^2}{(1+R^2)^2} \lesssim \lambda(t)^2
\]
Hence applying \eqref{kbd0} we obtain
\[
\| S \epsilon\|_{H^1_{N_1}} \lesssim \frac{1}{N_1} ( \| S
f\|_{L^2_{N_1}} + \| f\|_{L^2_{N_1}} + \| \lambda^2 \epsilon\|_{L^2_{N_1}})
\]
Then \eqref{kbd1} follows since
\[
\| \lambda^2 \epsilon\|_{L^2_{N_1}} \lesssim \|
\epsilon\|_{H^1_{N_1+2\beta+1}}
\]
We remark that this requires
\[
N_0 \geq N_1 + 2 \beta + 1
\]

The proof of \eqref{kbd2} is similar.

\end{document}